\newcommand{\R}{{\mathbb R}}
\newcommand{\N}{{\mathbb N}}
\renewcommand{\geq }{\geqslant}
\renewcommand{\leq }{\leqslant}
\def\neweq#1{\begin{equation}\label{#1}}
\def\endeq{\end{equation}}
\def\eq#1{(\ref{#1})}
\newtheorem{theorem}{Theorem}[section]
\newtheorem{proposition}[theorem]{Proposition}
\newtheorem{lemma}[theorem]{Lemma}
\newtheorem{remark}[theorem]{Remark}
\theoremstyle{definition}
\newcommand{\nero }{\color{black}}
\begin{document}

\title[Positivity preserving property for a partially hinged plate]{A positivity preserving property result for the biharmonic operator under partially hinged boundary conditions}

\author[Elvise BERCHIO]{Elvise BERCHIO}
\address{\hbox{\parbox{5.7in}{\medskip\noindent{Dipartimento di Scienze Matematiche, \\
Politecnico di Torino,\\ Corso Duca degli Abruzzi 24, 10129 Torino, Italy. \\[3pt]
\em{E-mail address: }{\tt elvise.berchio@polito.it}}}}}
\author[Alessio FALOCCHI]{Alessio FALOCCHI}
\address{\hbox{\parbox{5.7in}{\medskip\noindent{Dipartimento di Scienze Matematiche, \\
Politecnico di Torino,\\ Corso Duca degli Abruzzi 24, 10129 Torino, Italy. \\[3pt]
\em{E-mail address: }{\tt alessio.falocchi@polito.it}}}}}

\date{\today}

\keywords{biharmonic; positivity preserving; partially hinged plate; Green function}

\subjclass[2010]{35G15, 35J08, 35B09}

\begin{abstract}

It is well known that for higher order elliptic equations the positivity preserving property (PPP) may fail. In striking contrast to what happens under Dirichlet boundary conditions, we prove that the PPP holds for the biharmonic operator on rectangular domains under partially hinged boundary conditions, i.e. nonnegative loads yield positive solutions. The result follows by fine estimates of the Fourier expansion of the corresponding Green function. 
\end{abstract}

\maketitle

\section{Introduction}
One of the main obstructions in the development of the theory of higher order elliptic equations is represented by the loss of general maximum principles, see e.g. \cite[Chapter 1]{book}. Nevertheless, due to the central role that these technical tolls play in the general theory of second order elliptic equations, in the last century a large part of literature has focused in studying whether the related boundary value problems possibly enjoy the so-called \emph{positivity preserving property}  (PPP in the following). As a matter of example, let us consider the \emph{clamped} plate problem:
\begin{equation}\label{Dirichlet}
\begin{cases}
\Delta^2 u=f \qquad  &\text{in } \Omega \\
u=|\nabla u|=0  \qquad  &\text{on } \partial\Omega
\end{cases}
\end{equation}
where $\Omega \subset \R^n$ is a bounded domain and $f\in L^2(\Omega)$; we say that the above problem satisfies the PPP if the following implication holds
$$f\geq 0 \text{ in } \Omega\quad  \Rightarrow \quad u\geq 0  \text{ in } \Omega\,,$$
where $u$ is a (weak) solution to  \eq{Dirichlet}. The validity of the PPP generally depends either on the choice of the boundary conditions and on the geometry of the domain. For instance, from the seminal works by Boggio \cite{bo1,bo2}, it is known that problem \eq{Dirichlet} satisfies the PPP when $\Omega$ is a ball in $\R^n$, while, in \cite{coffman}, Coffman and Duffin proved that the PPP does not hold when $\Omega$ is a two dimensional domain containing a right angle, such as a square or a rectangle, see also Figure \ref{clamped} below. \par

Things become somehow simpler if in \eq{Dirichlet}, instead of the Dirichlet boundary conditions, we take the Navier boundary conditions, i.e. we consider the \emph{hinged} plate problem:
\begin{equation*}
\begin{cases}
\Delta^2 u=f \qquad  &\text{in } \Omega \\
u=\Delta u=0  \qquad  &\text{on } \partial\Omega.
\end{cases}
\end{equation*}

Here, the PPP follows by applying twice the comparison principle for the laplacian under Dirichlet boundary conditions. It is worth noticing that smoothness of the domain cannot be overlooked since it has been shown by Nazarov and Sweers
\cite{nazarov} that, also in this case, the PPP may fail for planar domains with an interior corner. We refer to the book \cite{book} for more details and PPP results under different kind of boundary conditions, e.g. Steklov boundary conditions, and to \cite{gru,grusweers1,grusweers,parini,romani,schnieders,sweers, sweers3} for up to date results on
the topic. \par
In the present paper we focus on the less studied \emph{partially hinged} plate problem which arises in several mathematical models having engineering interest, e.g. models of bridges or footbridges. In particular, a 2-d model for suspension bridges has been proposed in \cite{fergaz}; here the bridge is seen as a thin long rectangular plate $\Omega \subset \R^2$ hinged at the short edges, see also \cite{bfg} for further details. More precisely, if, by scaling, we assume that $ \Omega=(0,\pi)\times(-\ell,\ell)$ with $\ell>0$, the \emph{partially hinged} problem writes:
	\begin{equation}\label{ppp f(x,y)}
	\begin{cases}
	\Delta^2 u=f \qquad & \text{in } \Omega\,\\
	u(0,y)=u_{xx}(0,y)=u(\pi,y)=u_{xx}(\pi,y)=0  \qquad &\text{for } y\in (-\ell,\ell) \\
	u_{yy}(x,\pm\ell)+\sigma
	u_{xx}(x,\pm\ell)=u_{yyy}(x,\pm\ell)+(2-\sigma)u_{xxy}(x,\pm\ell)=0
	 \qquad& \text{for } x\in (0,\pi),
	\end{cases}
	\end{equation}
	where $f\in L^2(\Omega)$, $\sigma\in [0,1)$ is the so-called Poisson ratio and depends on the material by which the plate is made of. 	\par
	It is known that the validity of the PPP for a problem is related to the sign of the associated Green function. Indeed, if $G_p(q):=G(p,q)$ denotes the Green function of \eqref{ppp f(x,y)}, the (weak) solution to \eqref{ppp f(x,y)} writes
	$$u(p)= \int_{\Omega} G_p(q) f(q)\,dq \quad \forall p\in\Omega$$
	and the PPP becomes equivalent to
	$$G_p(q) \geq 0 \quad \forall (p,q) \in \Omega\,.$$
	The proof of the above inequality represents the main result of the present paper. More precisely, we first write the Fourier expansion of $G_p$, i.e. 
	\begin{equation*}
	G_p(q)=\sum_{m=1}^{+\infty} \dfrac{1}{2\pi}\dfrac{\phi_m(y,w)}{m^3}\sin(m\rho)\,\sin(mx) \qquad \forall p=(\rho,w) \text{ and } q=(x,y)\in\overline \Omega\,,
	\end{equation*}
	where the (involved) analytic expression of the functions $\phi_m$ is given explicitly in formula \eqref{soluzdirac2} of Section \ref{subsecgreen}. As subsequent step, we develop an accurate analysis of the qualitative properties of the $\phi_m$  and we show, in particular, that they are strictly decreasing with respect to $m\in\mathbb{N^+}$. This monotonicity issue is achieved by studying the sign of the derivatives of the $\phi_m$; since they have highly involved analytic expressions, in order to detect their sign, we set up a clever scheme where, step by step, we cancel out the dependence of some variables through optimisation arguments, see Remark \ref{proof idea} of Section \ref{proof1}. From the monotonicity of the $\phi_m$, through an asymptotic analysis, we also deduce their positivity. These information are essential for the subsequent part of the proof where we study the sign of $G_p$. More precisely, by means of suitable lower bounds, we first show the positivity of $G_p$ in a rectangle contained in $\Omega$, far from the hinged edges; then, we obtain the positivity in the remaining parts through suitable iterative procedures which, step by step, stick rectangles where $G_p$ is positive up to the boundary, see Section  \ref{proof2} for all details. \par
	 
As already remarked, the validity of the PPP for problem \eqref{ppp f(x,y)} is by no means an obvious fact, recall that it does not hold for problem \eq{Dirichlet} on rectangular planar domains; furthermore, in general, its validity is not expected for plates having two free edges. In Figure \ref{clamped} (left) we show a well known example of PPP violated for \eqref{Dirichlet} with $\Omega=(0,\pi)\times(-\pi/6,\pi/6)$ and with a load $f$ concentrated in $(\pi/3,0)$, see also \cite{sweers2}. In Figure \ref{clamped} (right) we consider the solution to a \emph{partially clamped} plate problem, i.e. \eqref{ppp f(x,y)} with Dirichlet conditions instead of Navier, with a concentrated load in $(\pi/3,\pi/6)$. Numerically, we obtain regions where the PPP fails near the corners.
	\begin{figure}[!hbt]
		\centering
		{\includegraphics[width=15cm]{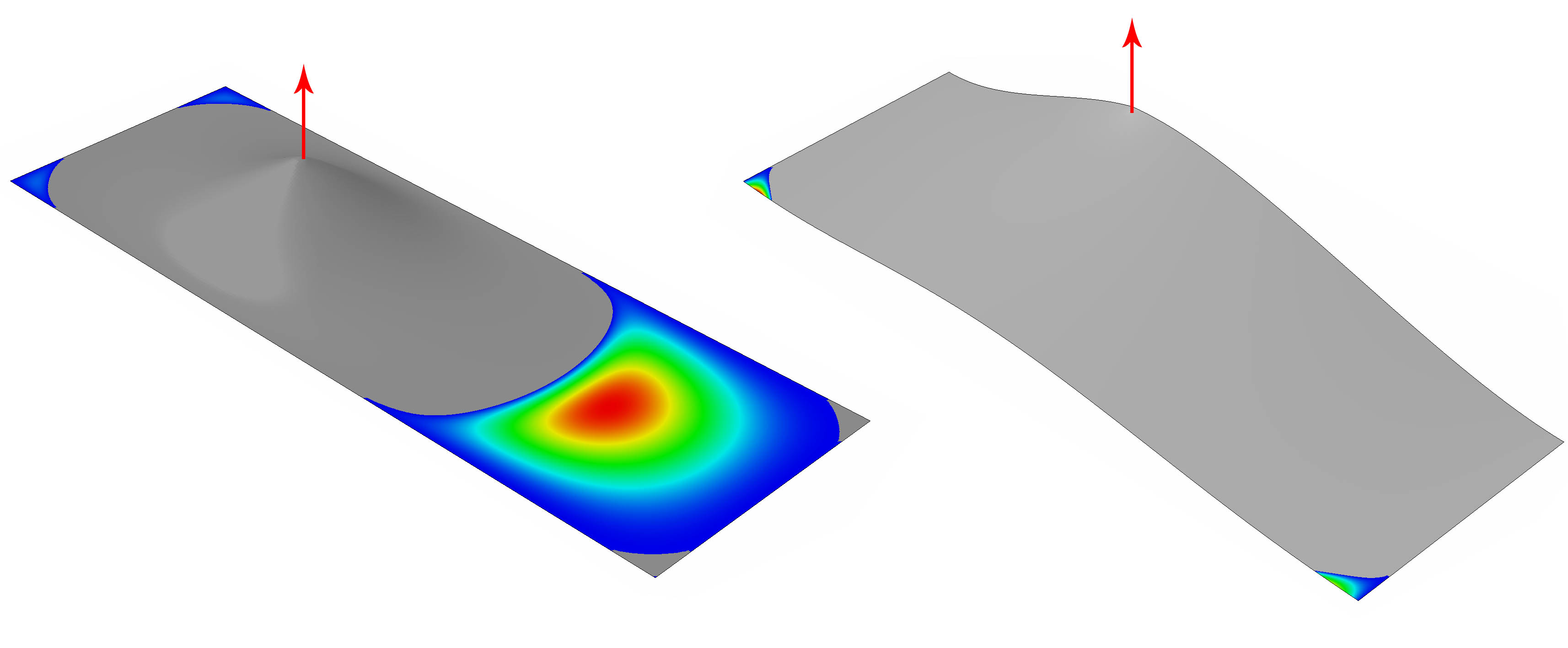}}
		\caption{Finite element approximated solution $u$ of the clamped (left) and partially clamped (right) plate problems under, respectively, a concentrated load in $(\frac{\pi}{3},0)$ and in $(\frac{\pi}{3},\frac{\pi}{6})$; the regions where $u\geq 0$ are grey, while the regions where $u<0$ are coloured from blue (less negative) to red ($\Omega=(0,\pi)\times(-\pi/6,\pi/6)$, $\sigma=0.2$).}
		\label{clamped}
	\end{figure}

	 The paper is organized as follows. In Section \ref{prelim} we introduce some notations and we state our main results: the Fourier expansion of $G_p$, together with the qualitative properties of its components, which is given in Theorem \ref{monotonia} and the precise statement of the PPP result which is given in Theorem \ref{greenpositiva}. The rest of the paper is devoted to the proofs. More precisely, in Section \ref{subsecgreen} we compute explicitly the Fourier series of the Green function as the limit of the solution to \eqref{ppp f(x,y)} for a specific $L^2$ forcing term converging to the Dirac delta function. In Section \ref{proof1} we prove the monotonicity and the positivity of the $\phi_m$, while in Section \ref{proof2}  we show the positivity of the Green function. Finally, we collect in the Appendix the proofs of some technical results needed either in Section \ref{proof1} and in Section \ref{proof2}.
	\par

	\section{Notations and main results}\label{prelim}
	
	The natural functional space where to set problem \eq{ppp f(x,y)} is
	$$
	H^2_*(\Omega)=\big\{u\in H^2(\Omega): u=0\mathrm{\ on\ }\{0,\pi\}\times(-\ell,\ell)\big\}\,.
	$$
	Note that the condition $u=0$ has to be meant in a classical sense because $\Omega$ is a planar domain and the energy space $H^2_*(\Omega)$ embeds into continuous functions. 
	Furthermore, for $\sigma\in[0,1)$ fixed, $H^2_*(\Omega)$ is a Hilbert space when endowed with the scalar product
	$$
	(u,v)_{H^2_*(\Omega)}:=\int_\Omega \left[\Delta u\Delta v+(1-\sigma)(2u_{xy}v_{xy}-u_{xx}v_{yy}-u_{yy}v_{xx})\right]\, dx \, dy \,
	$$
	with associated norm
	$$
	\|u\|_{H^2_*(\Omega)}^2=(u,u)_{H^2_*(\Omega)} \, ,
	$$
	which is equivalent to the usual norm in $H^2(\Omega)$, see \cite[Lemma 4.1]{fergaz}. Then, we reformulate problem \eq{ppp f(x,y)} in the following weak sense
	\begin{equation}
	\label{eigenweak}
	(u,v)_{H^2_*(\Omega)} =(f,v)_{L^2(\Omega)} \qquad\forall v\in H^2_*(\Omega).
	\end{equation}
	If $f\in H^{-2}_*(\Omega)$ we write $\langle f,v\rangle$ instead of $(f,v)_{L^2(\Omega)}$, i.e.
	\begin{equation}
	\label{weakdual}
	(u,v)_{H^2_*(\Omega)} =\langle f,v\rangle \qquad\forall v\in H^2_*(\Omega).
	\end{equation}
	Clearly, problem \eq{weakdual} (and consequently \eq{eigenweak}) admits a unique solution $u\in H^2_*(\Omega)$; in the following we shall specify the cases when $f\in H^{-2}_*(\Omega)$, otherwise we will always assume  $f\in L^2(\Omega)$.\par 
	For all $p\in \Omega$, the \emph{Green function} $G_p$ of \eqref{ppp f(x,y)} is, by definition, the unique solution to 
	\begin{equation}
	\label{greeneq}
	(G_p,v)_{H^2_*(\Omega)} =\langle \delta_{p},v\rangle=v(p) \qquad\forall v\in H^2_*(\Omega)\,.
	\end{equation}
	Recalling that $H^2_*(\Omega) \subset C^0(\overline \Omega)$ the above definition makes sense for all $p \in \overline \Omega$.
	By separating variables, in Section \ref{subsecgreen} we derive the Fourier expansion of $G_p$ and in Section \ref{proof1} we prove some crucial qualitative properties of its Fourier components. We collect these results in the following:
	
	\begin{theorem}\label{monotonia}
	Let $\sigma\in[0,1)$ and $p=(\rho,w)\in \overline \Omega$, furthermore let $G_p\in H^2_*(\Omega) \subset C^0(\overline \Omega)$ be the Green function of \eqref{ppp f(x,y)}. Then,
	\begin{equation*}
	G_p(x,y)=\sum_{m=1}^{+\infty} \dfrac{1}{2\pi}\dfrac{\phi_m(y,w)}{m^3}\sin(m\rho)\,\sin(mx) \qquad \forall (x,y)\in\overline \Omega\,,
	\end{equation*}
where the functions $\phi_m(y,w)$ are given explicitly in formula \eqref{soluzdirac2} of Section \ref{subsecgreen}.\par
	 In particular, the $\phi_m(y,w)$ are strictly positive and strictly decreasing with respect to $m$, i.e.
			\begin{equation}\label{decreasing}
		0<\phi_{m+1}(y,w)<\phi_m(y,w)\qquad\forall m\in\mathbb{N^+},\forall y,w\in[-\ell,\ell]\,.
		\end{equation}

	\end{theorem}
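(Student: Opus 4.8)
The plan is to start from the explicit separation-of-variables computation: for each fixed frequency $m$, the Green function of the one-dimensional problem obtained after projecting onto $\sin(mx)$ is an ODE Green function in the $y$-variable, solving a fourth-order ODE with constant coefficients $\psi'''' - 2m^2\psi'' + m^4\psi = \delta_w$ on $(-\ell,\ell)$, together with the two natural boundary conditions at $y=\pm\ell$ coming from \eqref{ppp f(x,y)}. First I would carry out this computation cleanly, obtaining $\phi_m(y,w)$ as a combination of $\cosh(m(y\mp w))$, $\sinh$, and the products $y\sinh$, $y\cosh$ (hyperbolic analogues of the resonant fundamental system for the repeated root $\pm m$), with coefficients that are rational functions of $m\ell$, $\sinh(2m\ell)$, $\cosh(2m\ell)$ and the Poisson ratio $\sigma$. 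The symmetry $\phi_m(y,w)=\phi_m(w,y)$ (self-adjointness) and $\phi_m(-y,-w)=\phi_m(y,w)$ should be recorded at this stage, since they let me reduce all subsequent analysis to the triangle $-\ell\le w\le y\le\ell$.

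The core is the monotonicity \eqref{decreasing}. My first instinct is to treat $m$ as a continuous parameter $t>0$, write $\phi_t(y,w)$, and show $\partial_t\phi_t(y,w)<0$ for all $t\ge 1$ and all $y,w$. Differentiating the explicit formula in $t$ produces an expression of the form (polynomial in $t\ell$, $y$, $w$, $\sigma$) times (exponentials in $t\ell$, $ty$, $tw$), all divided by a positive denominator; after clearing the denominator the sign question becomes: is a certain explicit function $F(t,y,w,\sigma)$ negative on $[1,\infty)\times\{-\ell\le w\le y\le\ell\}\times[0,1)$? This is exactly the "highly involved analytic expression" the introduction warns about, and detecting its sign is the main obstacle. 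The strategy outlined in the paper's introduction (Remark \ref{proof idea}) is the one I would follow: peel off the variables one at a time by optimisation. Concretely, I would first fix $t$ and $\sigma$ and study $F$ as a function of the two spatial variables on the triangle, showing that the maximum over this compact set is attained either on the diagonal $y=w$ or on an edge $y=\ell$ or $w=-\ell$ (by checking that interior critical points cannot be maxima, using the structure of the $\partial_y$, $\partial_w$ derivatives, and exploiting convexity/monotonicity of the hyperbolic building blocks); this collapses the problem to one spatial variable. Next I would eliminate $\sigma$: since $F$ is affine (or at worst has controlled monotonicity) in $\sigma$, it suffices to check the two endpoint values $\sigma=0$ and $\sigma\to 1^-$. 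What remains is a one-variable inequality in $t\ell$ alone, which can be settled by Taylor expansion of the hyperbolic functions combined with a crude tail estimate valid for $t\ell$ large — the kind of elementary but lengthy estimate I would relegate to the Appendix. Throughout, the hypothesis $t\ge 1$ (i.e. $m\ge 1$) is what makes these estimates work; the inequality genuinely can degrade for small $t$.

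Finally, the strict positivity $\phi_m(y,w)>0$ follows from the monotonicity by an asymptotic argument: from \eqref{decreasing} it is enough to show $\lim_{m\to\infty}\phi_m(y,w)\ge 0$, and in fact $>0$ after a little care. From the explicit formula one reads off that, as $m\to\infty$ with $y,w\in[-\ell,\ell]$ fixed, the dominant exponential terms combine so that $\phi_m(y,w) = \frac{1}{2m}e^{-m|y-w|}\bigl(1+o(1)\bigr)$ (the boundary-layer contributions at $y=\pm\ell$ being exponentially subdominant when $y,w$ are in the interior, and matching up to give a nonnegative limit on the closed square by continuity). Hence $\phi_m(y,w)>0$ for $m$ large, and then the strict decreasing property propagates positivity down to every $m\in\mathbb{N}^+$. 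I would be careful to handle the corner cases $y=w=\pm\ell$ and $|y-w|=2\ell$ separately, since there the leading asymptotics change form, but monotonicity plus a direct check of $\phi_1$ (or a uniform lower bound) closes these as well.
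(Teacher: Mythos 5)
Your high-level strategy matches the paper's: compute the Fourier components $\phi_m$ explicitly, prove strict monotonicity in $m$ by treating it as a continuous parameter (the paper uses $z=m\ell>0$ and shows $\phi_z<0$), then extract positivity from monotonicity plus the $z\to\infty$ limit. Your plan to peel off variables by optimisation also matches what the paper announces in Remark \ref{proof idea}. However, two of your proposed steps have genuine gaps, and two smaller claims are wrong.

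The central device your plan lacks is the elementary implication \eqref{lemma0}: if $W(z)+|Q(z)|<0$ then $W(z)\cosh(\omega z)+Q(z)\sinh(\omega z)<0$ for every $\omega$. The paper first splits $\phi=e^{-z}g+h$ with $h(s,k,z)=(1+z|k-s|)e^{-z|k-s|}>0$ and $h_z\le0$, then reduces to $g_z-g<0$; it writes $g_z-g=W\cosh(sz)+Q\sinh(sz)$, reduces via \eqref{lemma0} to $W\pm Q<0$, and observes that $W\pm Q$ are concave parabolas in $s$ whose vertex lies outside $[-1,1]$, so only the endpoint value need be checked — which is then itself re-decomposed as $\mathcal{W}\cosh(kz)+\mathcal{Q}\sinh(kz)$ and the scheme iterates. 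Each pass kills a variable exactly because of the hyperbolic structure. Your vaguer plan of maximizing the raw two-spatial-variable function over the triangle $-\ell\le w\le y\le\ell$ and showing the max sits on the diagonal or an edge does not obviously go through: the critical-point equations for that function are not tractable without this algebraic splitting, and the paper makes no use of the $y\leftrightarrow w$ triangle reduction you propose (it only uses even/odd parity in $k$ to handle the $\pm$ branches).

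The $\sigma$-elimination step ("affine in $\sigma$, check endpoints $\sigma=0$ and $\sigma\to1^-$") is a gap and probably cannot be repaired. From \eqref{cost1} and the lemmas, the $\sigma$-dependence involves $\tfrac{1}{1-\sigma}$, $(1+\sigma)^2$, $(3+\sigma)^2$, $(7+10\sigma-\sigma^2)$, etc.; it is certainly not affine, and the limit $\sigma\to1^-$ is singular. The paper never reduces to endpoint $\sigma$; it proves inequalities like $\psi\pm\xi>0$, \eqref{dis2}, and the lemmas of Section \ref{proof1} uniformly in $\sigma\in[0,1)$, exploiting the explicit structure. Remark \ref{knegative} even notes the proof of \eqref{vertice} genuinely fails for $\sigma\in(-1,0)$, confirming the $\sigma$-dependence is not reducible to a boundary check.

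Two smaller errors: (i) Your asymptotic $\phi_m(y,w)=\tfrac{1}{2m}e^{-m|y-w|}(1+o(1))$ is wrong; from \eqref{soluzdirac2} the non-boundary part is $h=(1+m|y-w|)e^{-m|y-w|}$, which tends to $1$ on the diagonal $y=w$ and to $0$ exponentially off it — there is no $1/m$ prefactor. The paper instead shows $e^{-z}g\to L\ge0$ as $z\to\infty$ (with $L>0$ precisely at $k=s=\pm1$) and uses $h>0$ outright. (ii) The restriction $t\ge1$ is not what makes the estimates work: since $z=m\ell$ and $\ell>0$ is arbitrary, all the key inequalities (\eqref{dis1}, \eqref{vertice}, \eqref{maximum}) must hold for every $z>0$, and the paper proves them so; small $z$ is a genuine regime that requires care (see, e.g., the separate treatment of $z\in(0,1]$ in the proof of \eqref{ts3} and of $z\in(0,1/2]$ in Lemma \ref{AF_F1}). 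Finally, a minor remark: the paper derives $\phi_m$ not by directly solving the ODE with a delta forcing but by approximating $\delta_p$ with $L^2$ forcings and passing to the limit (Lemmas \ref{conv green}--\ref{convfou}, Proposition \ref{green expansion}); your direct ODE route is equivalent.
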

	\par
	\medskip
	\par
	
		\begin{figure}[!hbt]
		\centering
		{\includegraphics[width=8cm]{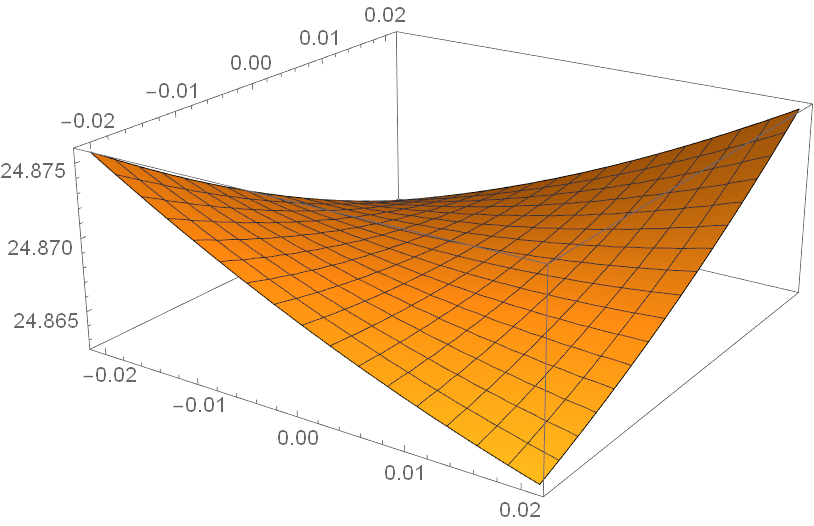}\hspace{5mm}\includegraphics[width=8cm]{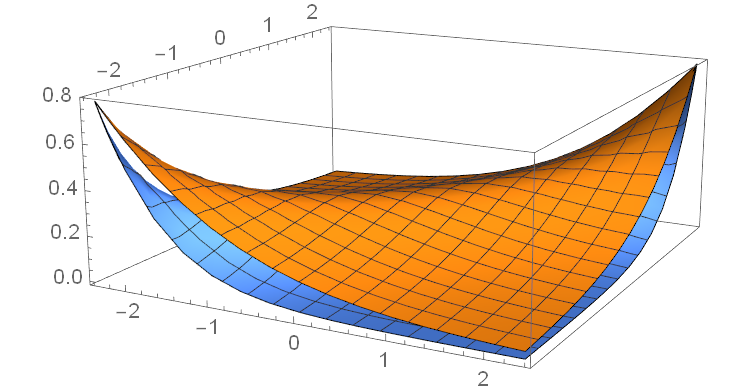}}
		\caption{On the left plot of $\phi_1(y,w)$ with $\ell=\pi/150$ and $\sigma=0.2$; on the right plot of $\phi_1(y,w)$ (orange) and $\phi_2(y,w)$ (blue) with $\ell=3\pi/4$ and $\sigma=0.2$.}
		\label{greenfig}
	\end{figure}
	In Figure \ref{greenfig} on the left we provide the plot of $\phi_1(y,w)$ with $\ell=\pi/150$ and $\sigma=0.2$; on the right we provide the plot of $\phi_1(y,w)$ and $\phi_2(y,w)$ for $\ell=3\pi/4$. Qualitatively, we have similar plots for any $m\in\mathbb{N^+}$ and they all highlight that the points where the positivity of $\phi_m(y,w)$ is more difficult to show are $(\pm\ell,\mp\ell)$.  This confirms the physical intuition that a concentrated load in $w=\ell$ produces the largest vertical (positive) displacement in $y=\ell$ and the smallest in $y=-\ell$. We refer to \cite{bbgza} for a detailed analysis about the torsional performances of partially hinged plates under the action of different external forces.  \par 
	Instead, Figure \ref{greenfig} on the right highlights how the monotonicity issue (with respect $m$) becomes more difficult to be proved at $(\pm\ell,\pm\ell)$, where the difference between the $\phi_m$ reduces. Numerically, we see that this becomes more evident for large $\ell$. However, Theorem \ref{monotonia} assures that the $\phi_m$ never intersect and preserve their positivity for all $\ell>0$.  
	
	By exploiting Theorem \ref{monotonia} we derive the main result of the paper, namely the positivity of $G_p$. More precisely, we set
 $$\widetilde{\Omega}:=(0,\pi)\times[-\ell,\ell]\,$$ 
 and we prove
	
	\begin{theorem}\label{greenpositiva}
	Let $\sigma\in[0,1)$ and $p\in \widetilde \Omega$, furthermore let $G_p\in H^2_*(\Omega) \subset C^0(\overline \Omega)$  be the Green function of \eqref{ppp f(x,y)}. There holds
		\begin{equation*}\label{tesi}
		G_p(x,y)> 0\qquad \forall (x,y) \in\widetilde \Omega.
		\end{equation*}
		Therefore, if $f\in L^2(\Omega)$ and $u$ is the solution of \eq{ppp f(x,y)}, the following implication holds
		\begin{equation*}\label{tesi0}
		f\geq 0,\,\, f\not\equiv 0  \text{ in } \Omega\quad  \Rightarrow \quad u>0  \text{ in } \widetilde{\Omega}.
		\end{equation*}
	\end{theorem}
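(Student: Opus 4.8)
The plan is to deduce Theorem \ref{greenpositiva} from Theorem \ref{monotonia} by a careful analysis of the Fourier series
$$G_p(x,y)=\sum_{m=1}^{+\infty}\frac{1}{2\pi}\frac{\phi_m(y,w)}{m^3}\sin(m\rho)\sin(mx),$$
treating $y,w$ as fixed parameters and regarding the sum as a function of $x\in(0,\pi)$ with $\rho\in(0,\pi)$ a further parameter. The first step is to establish positivity of $G_p$ in an interior rectangle $R:=(a,\pi-a)\times[-\ell,\ell]$ bounded away from the short hinged edges $x=0,\pi$. On such a set the leading term $\tfrac{1}{2\pi}\phi_1(y,w)\sin\rho\sin x$ is strictly positive and bounded below by a constant $c(a)>0$ times $\phi_1(y,w)$; using $0<\phi_m(y,w)<\phi_1(y,w)$ from \eqref{decreasing} together with the factor $m^{-3}$, I would bound the tail $\sum_{m\ge 2}$ in absolute value by $\phi_1(y,w)\sum_{m\ge2}m^{-3}/(2\pi)$, which is a fixed fraction of the first term. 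Since $\sum_{m\ge2}m^{-3}=\zeta(3)-1\approx0.202$ is comfortably smaller than what the first term provides on $R$, this gives $G_p>0$ on $R$. (If the crude bound is not quite enough, one refines it by keeping the first two terms explicitly and using monotonicity $\phi_m<\phi_2$ on the tail $m\ge3$.)

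The second step is to propagate positivity from the interior rectangle up to the edges $x=0$ and $x=\pi$. The idea is an iteration: knowing $G_p>0$ on a rectangle $(a,\pi-a)\times[-\ell,\ell]$, I want to enlarge it to $(a',\pi-a')\times[-\ell,\ell]$ with $a'<a$. Here I would exploit the structure of the equation near the hinged edge: on the segment $\{0\}\times(-\ell,\ell)$ both $G_p$ and $(G_p)_{xx}$ vanish, so $G_p$ together with $v:=-(G_p)_{xx}$ solves a system of two second-order problems (the biharmonic factors through the Laplacian-type operator under Navier-type data on that portion of the boundary), and a one-dimensional comparison/maximum-principle argument in the $x$ variable — using that $\sin(mx)\ge0$ for $x\in[0,\pi]$ and that each Fourier coefficient $\tfrac{1}{2\pi}\phi_m(y,w)m^{-3}\sin(m\rho)$ has a controlled sign pattern — lets one conclude $G_p\ge0$ on the slightly larger strip, then $>0$ in its interior by the (strict) positivity already known on the smaller one and unique continuation / the strong maximum principle. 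Repeating finitely many (or countably many with a limiting argument) such enlargements covers all of $\widetilde\Omega=(0,\pi)\times[-\ell,\ell]$.

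The hard part will be the second step: turning the heuristic "stick rectangles of positivity up to the boundary" into a rigorous iteration. The subtlety is that $G_p$ is only $H^2_*$, the natural boundary conditions on $y=\pm\ell$ are the non-standard ones in \eqref{ppp f(x,y)} (involving $\sigma$), and one cannot naively apply a scalar maximum principle to $\Delta^2$. One must instead work either with the explicit series — exploiting $\sin(mx)\ge0$ on $[0,\pi]$, the sign of $\sin(m\rho)$, the positivity and monotonicity of $\phi_m$, and summation-by-parts type estimates to control oscillation in $x$ — or factor the operator through the boundary conditions that do degenerate nicely at $x=0,\pi$ (the Navier part) to get a genuine comparison principle in the $x$-direction. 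Once the boundary behaviour near $x=0,\pi$ is under control, the corner points $(\pm\ell,\mp\ell)$ flagged in Figure \ref{greenfig} are handled because Theorem \ref{monotonia} already guarantees $\phi_m>0$ up to and including those corners, so no extra degeneracy arises there.

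Finally, the implication for $f$ is immediate: if $f\ge0$, $f\not\equiv0$, then $u(p)=\int_\Omega G_p(q)f(q)\,dq$ is an integral of the strictly positive kernel $G_p(q)$ (for $p\in\widetilde\Omega$, $q\in\widetilde\Omega$) against a nonnegative, not identically zero $f$, hence $u(p)>0$ for every $p\in\widetilde\Omega$, which is the asserted strict PPP.
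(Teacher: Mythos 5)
Your outline has the right high-level shape (positivity on an interior rectangle first, then propagation to the hinged edges), and the final PPP deduction is correct, but both analytical steps have genuine gaps.

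\textbf{Step 1 does not close as stated.} You bound the tail by
$\tfrac{\phi_1}{2\pi}\sum_{m\ge2}m^{-3}$ (using $|\sin(m\rho)\sin(mx)|\le 1$) and compare it with the first term $\tfrac{\phi_1}{2\pi}\sin\rho\sin x$. But the statement must hold for \emph{all} $p=(\rho,w)\in\widetilde\Omega$, and as $\rho\to 0$ or $\rho\to\pi$ the first term $\sin\rho\sin x\to 0$ while your tail bound stays fixed at $\zeta(3)-1$, so the estimate fails. The paper avoids this by invoking $|\sin(m\rho)|\le m\sin\rho$ for $\rho\in(0,\pi)$, which makes the factor $\sin\rho$ appear in the tail estimate as well:
\begin{equation*}
\Bigl|\sum_{m\ge2}\frac{\phi_m}{m^3}\sin(m\rho)\sin(mx)\Bigr|\le \phi_1\sin\rho\sum_{m\ge2}\frac{1}{m^2},
\end{equation*}
so that $\sin\rho$ cancels and one only needs $\sin x>\pi^2/6-1$ (Proposition~\ref{prop51}). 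Without this inequality your bound does not give positivity on the interior rectangle uniformly in $\rho$.

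\textbf{Step 2 is a different route, and it does not work as proposed.} You suggest a comparison/maximum principle in the $x$-direction after ``factoring'' through the Navier conditions on $x=0,\pi$. This is precisely what fails for a \emph{partially hinged} plate: the boundary conditions on $y=\pm\ell$ involve $\sigma$ and do not give $\Delta G_p=0$ there, so the operator does not factor into two Dirichlet--Laplace problems; that factorisation is exactly what is available (and used) in the purely hinged/Navier case, and its absence here is the entire difficulty. Your alternative suggestion, ``exploiting $\sin(mx)\ge 0$ for $x\in[0,\pi]$'', rests on a false fact: $\sin(mx)$ changes sign on $(0,\pi)$ for every $m\ge 2$. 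The paper instead pushes the positivity from the centre to the edges purely through explicit Fourier estimates: the key ingredients are the monotonicity $0<\phi_{m+1}<\phi_m$, the sharp inequality $\sin(m\rho)\sin(mx)>\sin\rho\sin x$ on $(0,\tfrac{\pi}{N+1})^2$ for $m=2,\dots,N$ (Lemma~\ref{lemmasin}), its alternating-series analogue (Lemma~\ref{lemmasin2}) for the case $p,q$ near opposite edges, and an iterative ``expanding rectangles'' bookkeeping (Lemmas~\ref{lemma1}--\ref{lemma4}, Propositions~\ref{prop2}, \ref{prop3}). Neither a maximum principle in $x$ nor a straightforward summation-by-parts argument substitutes for those lemmas; if you want to push your route through, you would need to produce a concrete comparison principle compatible with the $\sigma$-dependent free-edge conditions on $y=\pm\ell$, and that is not available.

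The final deduction (PPP for $u$ from strict positivity of $G_p$) is fine and matches the paper.
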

	As explained in the Introduction, the validity of the above implication is not obvious at all; recall that the positivity issue fails on rectangular plates under Dirichlet boundary conditions, see \cite{coffman} and Figure \ref{clamped}.

 \begin{remark}
	The Poisson ratio $\sigma$ of a material is defined as the ratio between the transversal strain and the longitudinal strain in the direction of the stretching force; for most of materials we have $\sigma\in(0,1/2)$. Nevertheless, there are materials having negative Poisson ratio, hence the range $\sigma\in(-1,1/2)$ includes all possible values. Numerical experiments lead us to conjecture that Theorem \ref{greenpositiva} still holds for $\sigma\in (-1,0)$. In Remark \ref{knegative} of Section \ref{proof1} we highlight the points where our proof fails when assuming $\sigma$ negative.  
\end{remark}

	\section{Green function computation}\label{subsecgreen}
	
	The aim of this section is to provide the Fourier expansion of the Green function $G_p$, namely of the solution to \eqref{greeneq}. This is done by developing a suitable limit approach where, in principle, $ \delta_{p}$ is replaced by a suitable $L^2$ function converging to it. \par
	 To begin with, we fix $p=(\rho,w)\in \Omega$ and we introduce $\alpha,\eta>0$ sufficiently small so that $[\rho-\alpha,\rho+\alpha]\times[w-\eta,w+\eta]\subset  \Omega$; then we denote by $u^p_{\alpha,\eta} \in H^2_*(\Omega)$ the unique solution to the auxiliary problem:
	\begin{equation}\label{ppp approx}
	(u^p_{\alpha,\eta},v)_{H^2_*(\Omega)} =(f^p_{\alpha,\eta},v)_{L^2(\Omega)} \qquad\forall v\in H^2_*(\Omega),
	\end{equation}
	where $$f^p_{\alpha,\eta}(x,y):=\dfrac{\chi_{[\rho-\alpha,\rho+\alpha]}(x)\chi_{[w-\eta,w+\eta]}(y)}{4\alpha\eta}$$
	and $\chi_A$ denotes the characteristic function of the set $A\subset \mathbb{R}$. We get:
	\begin{lemma}\label{conv green}
		As $(\alpha,\eta)\rightarrow(0,0)$ there holds
		$$
		f^p_{\alpha,\eta}\rightarrow \delta_p \quad \text{in } H^{-2}_*(\Omega)\label{statement 1} \quad\text{ and } \quad
		u^p_{\alpha,\eta}\rightarrow G_p\quad\text{in }H^2_*(\Omega) \text{ and in } C^0(\overline \Omega) $$
		where $G_p$ is the unique solution to \eqref{greeneq}.
	\end{lemma}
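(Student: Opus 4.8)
\textbf{Proof plan for Lemma \ref{conv green}.}

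The plan is to establish the two convergences separately, starting from the distributional convergence $f^p_{\alpha,\eta}\to\delta_p$ and then using the continuity of the solution operator to transfer it to the solutions.

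First I would prove that $f^p_{\alpha,\eta}\to\delta_p$ in $H^{-2}_*(\Omega)$. Since $H^2_*(\Omega)\subset C^0(\overline\Omega)$ with continuous embedding, for every $v\in H^2_*(\Omega)$ one has
\begin{equation*}
\left|(f^p_{\alpha,\eta},v)_{L^2(\Omega)}-\langle\delta_p,v\rangle\right|
=\left|\frac{1}{4\alpha\eta}\int_{\rho-\alpha}^{\rho+\alpha}\int_{w-\eta}^{w+\eta}\big(v(x,y)-v(\rho,w)\big)\,dx\,dy\right|
\le \sup_{|x-\rho|\le\alpha,\,|y-w|\le\eta}|v(x,y)-v(\rho,w)|\,.
\end{equation*}
The quantity on the right tends to $0$ as $(\alpha,\eta)\to(0,0)$ by continuity of $v$ at $p$, which gives weak-$*$ convergence. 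To upgrade this to strong convergence in $H^{-2}_*(\Omega)$ one notes that the family $\{f^p_{\alpha,\eta}\}$ is bounded in $H^{-2}_*(\Omega)$: indeed $\|f^p_{\alpha,\eta}\|_{H^{-2}_*}=\sup_{\|v\|_{H^2_*}\le 1}|(f^p_{\alpha,\eta},v)_{L^2}|\le C_\Omega$, where $C_\Omega$ is the norm of the embedding $H^2_*(\Omega)\hookrightarrow C^0(\overline\Omega)$, since $|(f^p_{\alpha,\eta},v)_{L^2}|\le\|v\|_{C^0}\le C_\Omega\|v\|_{H^2_*}$. Combining boundedness with the fact that $C^\infty_c$-type functions (or simply a dense countable subset) separate the limit, the convergence holds in the norm of $H^{-2}_*(\Omega)$; alternatively, since $H^2_*(\Omega)$ is separable and the embedding into $C^0$ is compact on bounded sequences converging pointwise, one passes directly from the displayed estimate to norm convergence by a standard equicontinuity argument.

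Next, subtracting the weak formulations \eqref{ppp approx} and \eqref{greeneq} gives, for every $v\in H^2_*(\Omega)$,
\begin{equation*}
(u^p_{\alpha,\eta}-G_p,v)_{H^2_*(\Omega)}=(f^p_{\alpha,\eta},v)_{L^2(\Omega)}-\langle\delta_p,v\rangle\,.
\end{equation*}
Choosing $v=u^p_{\alpha,\eta}-G_p$ and using the definition of the dual norm yields
\begin{equation*}
\|u^p_{\alpha,\eta}-G_p\|_{H^2_*(\Omega)}^2\le\|f^p_{\alpha,\eta}-\delta_p\|_{H^{-2}_*(\Omega)}\,\|u^p_{\alpha,\eta}-G_p\|_{H^2_*(\Omega)}\,,
\end{equation*}
hence $\|u^p_{\alpha,\eta}-G_p\|_{H^2_*(\Omega)}\le\|f^p_{\alpha,\eta}-\delta_p\|_{H^{-2}_*(\Omega)}\to 0$. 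Finally, the convergence in $C^0(\overline\Omega)$ follows immediately from the continuous embedding $H^2_*(\Omega)\hookrightarrow C^0(\overline\Omega)$, since $\|u^p_{\alpha,\eta}-G_p\|_{C^0(\overline\Omega)}\le C_\Omega\|u^p_{\alpha,\eta}-G_p\|_{H^2_*(\Omega)}$.

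I do not expect any serious obstacle here: the only mildly delicate point is the passage from weak-$*$ to strong convergence in $H^{-2}_*(\Omega)$, and even that is routine once one observes that the embedding $H^2_*(\Omega)\hookrightarrow C^0(\overline\Omega)$ is compact (by Rellich--Kondrachov, since $\Omega$ is a bounded planar Lipschitz domain and $H^2(\Omega)\hookrightarrow C^0(\overline\Omega)$ compactly), so that testing against the unit ball of $H^2_*(\Omega)$ reduces to testing against a set of functions that are uniformly equicontinuous at $p$, on which the displayed bound is uniform.
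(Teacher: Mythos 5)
Your argument follows the same route as the paper's, but you are actually more careful at one point where the paper glosses over a real step. The paper establishes only the pointwise (weak-$*$) convergence $(f^p_{\alpha,\eta},v)_{L^2}\to v(p)$ for each fixed $v$, and then immediately uses $\|f^p_{\alpha,\eta}-\delta_p\|_{H^{-2}_*(\Omega)}\to 0$ in the energy estimate for $u^p_{\alpha,\eta}-G_p$; no argument is given for why the convergence is in norm rather than merely weak-$*$. You identify this gap explicitly and close it with the correct observation: by Morrey's embedding in dimension two, the unit ball of $H^2_*(\Omega)$ is bounded in $C^{0,\gamma}(\overline\Omega)$, hence uniformly equicontinuous, so the bound $\sup_{\|v\|_{H^2_*}\le 1}\sup_{|x-\rho|\le\alpha,|y-w|\le\eta}|v(x,y)-v(\rho,w)|\le C(\alpha+\eta)^{\gamma}\to 0$ yields genuine norm convergence in $H^{-2}_*(\Omega)$. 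The remainder of your argument — subtracting the weak formulations, testing with $v=u^p_{\alpha,\eta}-G_p$, and invoking the continuous embedding into $C^0(\overline\Omega)$ — coincides with the paper.

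One caveat: the first mechanism you suggest for upgrading weak-$*$ to strong convergence, namely ``boundedness in $H^{-2}_*$ together with convergence against a dense subset,'' is not by itself a valid route; those two ingredients yield only weak-$*$ convergence, and in an infinite-dimensional dual space that is strictly weaker than norm convergence. Your alternative argument via compactness of the embedding and uniform equicontinuity is the one that actually works, and it is the one you should keep; the first should be dropped.
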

		\begin{proof}
		We start by showing that $f^p_{\alpha,\eta}\rightarrow \delta_p$ in $H^{-2}_*(\Omega)$, i.e. that
		$$
		\lim\limits_{(\alpha,\eta)\rightarrow(0,0)}\int_\Omega f^p_{\alpha,\eta}(x,y)v(x,y)\,dx\,dy=v(p)=v(\rho,w)\qquad\forall v\in H^2_*(\Omega).
		$$
		Since $v\in H^2_*(\Omega)\subset C^0(\overline \Omega)$ by the mean value theorem there exists $(\sigma_{\alpha,\eta},\tau_{\alpha,\eta})\in [\rho-\alpha,\rho+\alpha]\times[w-\eta,w+\eta]$ such that
		$$
		\int_\Omega f^p_{\alpha,\eta}(x,y)v(x,y)\,dx\,dy=v(\sigma_{\alpha,\eta},\tau_{\alpha,\eta})\rightarrow v(\rho,w) \quad\text{as }(\alpha,\eta)\rightarrow(0,0)\qquad\forall v\in H^2_*(\Omega).
		$$
		Finally, we subtract  \eqref{greeneq} from \eqref{ppp approx} and testing with $v=u^p_{\alpha,\eta}-G_p$ we obtain 
		$$\|u^p_{\alpha,\eta}-G_p\|_{H^2_*(\Omega)}\leq \|f^p_{\alpha,\eta}-\delta_p\|_{H^{-2}_*(\Omega)}\rightarrow0\qquad\text{as }(\alpha,\eta)\rightarrow(0,0),$$
		implying that $u^p_{\alpha,\eta}\rightarrow G_p$ in $H^2_*(\Omega)$. 	
	\end{proof}

	Next we provide the explicit Fourier expansion of $u^p_{\alpha,\eta}$. To this aim we set:
		\begin{equation}\label{ci}
	\begin{split}
	c_1:=&c_1(m,w,\eta)=\frac{m A_m(\ell)[V_{m,w,\eta}(\ell)+V_{m,w,\eta}(-\ell)]+B_m(\ell)[W_{m,w,\eta}(\ell)-W_{m,w,\eta}(-\ell)]}{2m^3(1-\sigma) F(m\ell)}\\
	c_2:=&c_2(m,w,\eta)=\frac{m \overline A_m(\ell)[V_{m,w,\eta}(\ell)-V_{m,w,\eta}(-\ell)]+\overline B_m(\ell)[W_{m,w,\eta}(\ell)+W_{m,w,\eta}(-\ell)]}{2m^3(1-\sigma) \overline F(m\ell)}\\
	c_3:=&c_3(m,w,\eta)=\frac{m \cosh(m\ell)[V_{m,w,\eta}(\ell)-V_{m,w,\eta}(-\ell)]-\sinh(m\ell)[W_{m,w,\eta}(\ell)+W_{m,w,\eta}(-\ell)]}{2m^2 \overline F(m\ell)}\\
	c_4:=&c_4(m,w,\eta)=\frac{m \sinh(m\ell)[V_{m,w,\eta}(\ell)+V_{m,w,\eta}(-\ell)]-\cosh(m\ell)[W_{m,w,\eta}(\ell)-W_{m,w,\eta}(-\ell)]}{2m^2 F(m\ell)}
	\end{split}
	\end{equation}
	where
	$$F(m\ell):=\dfrac{(3+\sigma)}{2}\sinh(2m\ell)-m\ell (1-\sigma)\,, \quad  \overline F(m\ell):=\dfrac{(3+\sigma)}{2}\sinh(2m\ell)+m\ell (1-\sigma)$$
	$$A_m(\ell):=(1+\sigma)\sinh(m\ell)-(1-\sigma)m\ell \cosh(m\ell)\,, \quad B_m(\ell):= 2\cosh(m\ell)+(1-\sigma)m\ell\sinh(m\ell)\,,$$
	$$\overline A_m(\ell):=(1+\sigma)\cosh(m\ell)-(1-\sigma)m\ell \sinh(m\ell)\,,\quad \overline B_m(\ell):= 2\sinh(m\ell)+(1-\sigma)m\ell\cosh(m\ell)\,,$$
	$$V_{m,w,\eta}(\ell):=\sigma m^2 \Phi_{m,w,\eta}(\ell)-(\Phi_{m,w,\eta})''(\ell)\,,$$
	$$W_{m,w,\eta}(\ell):=(\sigma-2) m^2 (\Phi_{m,w,\eta})'(\ell)+(\Phi_{m,w,\eta})'''(\ell)\,,$$
	$$V_{m,w,\eta}(-\ell):=\sigma m^2 \Phi_{m,w,\eta}(-\ell)-(\Phi_{m,w,\eta})''(-\ell)\,,$$ 
	$$W_{m,w,\eta}(-\ell):=(\sigma-2) m^2 (\Phi_{m,w,\eta})'(-\ell)+(\Phi_{m,w,\eta})'''(-\ell)\,,$$
	with
	\begin{equation}\label{Phimweta}
	\Phi_{m,w,\eta}(y):=\frac{1}{2\eta}\int_{w-\eta}^{w+\eta}\,\frac{(1+m|y-t|)e^{-m|y-t|}}{4m^3}\,dt\,.
		\end{equation}
We notice that $\Phi_{m,w,\eta}$ is given by the convolution of the $H^3(\R)$ function $\frac{(1+m|y|)e^{-m|y|}}{4m^3}$ and the $L^2(\R)$ function $\frac{\chi_{[w-\eta,w+\eta]}(y)}{2\eta}$, hence $\Phi_{m,w,\eta}\in C^3(\R)$ and all the above constants are well-defined. Then we prove

	\begin{lemma}\label{convfou}
		Let  $u^p_{\alpha,\eta}$ be the unique solution to \eqref{ppp approx}, then
$$u^p_{\alpha,\eta}(x,y)=\sum_{m=1}^{+\infty} \varphi_{m,\alpha,\eta}^p(y)\sin(mx)$$
with
\begin{equation} \label{varphi_coeff}
	\begin{split}
	\varphi_{m,\alpha,\eta}^p(y):=&{\dfrac{2}{\pi}}\dfrac{\sin(m\alpha)}{ m\alpha \,}\sin(m\rho) \cdot\\&\cdot\left[ c_1 \cosh(my)+c_2 \sinh(my)+c_3 y\cosh(my) +c_4y\sinh(my)+\Phi_{m,w,\eta}(y) \right]
	\end{split}
	\end{equation}
where the constants $c_i$ and $\Phi_{m,w,\eta}$ are defined in \eqref{ci} and \eqref{Phimweta}. Furthermore, the above series converges in $H^2_*(\Omega)$ and in $C^0(\overline \Omega)$.
	\end{lemma}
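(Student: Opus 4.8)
The plan is to separate variables in the $x$-variable. Since $\{\sin(mx)\}_{m\ge1}$ is the complete orthogonal system of $L^2(0,\pi)$ adapted to the conditions $u=u_{xx}=0$ at $x=0,\pi$, I would write $u^p_{\alpha,\eta}=\sum_{m\ge1}u_m(y)\sin(mx)$. Because $u^p_{\alpha,\eta}\in H^2_*(\Omega)$, two integrations by parts in $x$ — the boundary terms vanishing thanks to $\sin(m\pi)=0$ and to $u^p_{\alpha,\eta}=0$ on $\{0,\pi\}\times(-\ell,\ell)$ — show that the $m$-th sine coefficient of $\partial_x^2 u^p_{\alpha,\eta}$ equals $-m^2u_m$; hence, by Parseval,
\[
\sum_{m\ge1}\Big(m^4\|u_m\|_{L^2(-\ell,\ell)}^2+m^2\|u_m'\|_{L^2(-\ell,\ell)}^2+\|u_m''\|_{L^2(-\ell,\ell)}^2\Big)<\infty,
\]
so the series converges in $H^2_*(\Omega)$ (its partial sums lying in this closed subspace of $H^2(\Omega)$) and, by the embedding $H^2(\Omega)\hookrightarrow C^0(\overline\Omega)$, in $C^0(\overline\Omega)$ as well; this already settles the two convergence statements.

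Next I would identify the coefficients $u_m$. Fix $m\ge1$ and test \eqref{ppp approx} against $v=\psi(y)\sin(mx)$ with $\psi\in H^2(-\ell,\ell)$: every term of $(\cdot,\cdot)_{H^2_*(\Omega)}$ carries a factor $\int_0^\pi\sin(kx)\sin(mx)\,dx$ or $\int_0^\pi\cos(kx)\cos(mx)\,dx$, both equal to $\tfrac{\pi}{2}\delta_{km}$, so all Fourier modes but the $m$-th drop out. Since the $m$-th sine coefficient of $f^p_{\alpha,\eta}$ is $\tfrac{2}{\pi}\tfrac{\sin(m\alpha)}{m\alpha}\sin(m\rho)\tfrac{\chi_{[w-\eta,w+\eta]}(y)}{2\eta}$ — using $\int_{\rho-\alpha}^{\rho+\alpha}\sin(mx)\,dx=\tfrac{2}{m}\sin(m\rho)\sin(m\alpha)$ — one reads off that $u_m$ is the unique solution in $H^2(-\ell,\ell)$ of the one-dimensional variational problem which, by one-dimensional elliptic regularity and the elementary Green identity, amounts to the classical boundary value problem
\[
\begin{cases}
u_m''''-2m^2u_m''+m^4u_m=\tfrac{2}{\pi}\,\tfrac{\sin(m\alpha)}{m\alpha}\sin(m\rho)\,\tfrac{\chi_{[w-\eta,w+\eta]}(y)}{2\eta} & \text{in }(-\ell,\ell),\\
u_m''(\pm\ell)-\sigma m^2u_m(\pm\ell)=0,\qquad u_m'''(\pm\ell)-(2-\sigma)m^2u_m'(\pm\ell)=0,
\end{cases}
\]
the conditions at $y=\pm\ell$ being the natural boundary conditions; this is exactly what formal separation of variables in \eqref{ppp f(x,y)} produces.

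It remains to solve this problem in closed form and to recognise \eqref{varphi_coeff}. The operator $\tfrac{d^4}{dy^4}-2m^2\tfrac{d^2}{dy^2}+m^4=\big(\tfrac{d^2}{dy^2}-m^2\big)^2$ has the double characteristic roots $\pm m$, so its homogeneous solutions are spanned by $\cosh(my),\sinh(my),y\cosh(my),y\sinh(my)$ — the four terms carrying $c_1,\dots,c_4$. A particular solution is obtained by convolving the right-hand side with the fundamental solution on $\R$ of $\big(m^2-\tfrac{d^2}{dy^2}\big)^2$, namely $\tfrac{(1+m|y|)e^{-m|y|}}{4m^3}$ — the self-convolution of $\tfrac{e^{-m|y|}}{2m}$, the fundamental solution of $m^2-\tfrac{d^2}{dy^2}$ — and this reproduces precisely the term $\tfrac{2}{\pi}\,\tfrac{\sin(m\alpha)}{m\alpha}\sin(m\rho)\,\Phi_{m,w,\eta}(y)$ with $\Phi_{m,w,\eta}$ as in \eqref{Phimweta}. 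Finally, $c_1,\dots,c_4$ are pinned down by the four conditions at $y=\pm\ell$; since the differential operator and both boundary operators commute with the reflection $y\mapsto-y$, I would split the homogeneous part into its even component $c_1\cosh(my)+c_4\,y\sinh(my)$ and its odd component $c_2\sinh(my)+c_3\,y\cosh(my)$, so that adding and subtracting the conditions written at $+\ell$ and at $-\ell$ breaks the $4\times4$ system into two $2\times2$ ones whose right-hand sides are the combinations $V_{m,w,\eta}(\ell)\pm V_{m,w,\eta}(-\ell)$ and $W_{m,w,\eta}(\ell)\pm W_{m,w,\eta}(-\ell)$ appearing in \eqref{ci}. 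Evaluating the boundary operators on $\cosh(my),\sinh(my),y\cosh(my),y\sinh(my)$ and using the addition formulas for $\sinh,\cosh$, one checks that the determinants of the two $2\times2$ systems equal, up to the explicit factors displayed in \eqref{ci}, $F(m\ell)$ and $\overline F(m\ell)$, which are strictly positive for every $m\ge1$, $\ell>0$, $\sigma\in[0,1)$ because $\tfrac{3+\sigma}{2}\sinh(2t)>(1-\sigma)t$ for $t>0$; Cramer's rule then produces exactly the values \eqref{ci}, whence $u_m=\varphi_{m,\alpha,\eta}^p$ for every $m$ and the lemma follows.

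The conceptual scheme above is routine; the step where I expect the bulk of the work is the bookkeeping in the third paragraph — evaluating the four boundary operators on the four basis functions, carrying out the even/odd reduction, and solving the two $2\times2$ systems so that the outcome collapses neatly to the closed forms \eqref{ci} with the denominators $F(m\ell)$ and $\overline F(m\ell)$.
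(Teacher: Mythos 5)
Your argument is correct and proves all the conclusions of the lemma, but it takes a logically different route from the paper's. The paper proceeds \emph{constructively}: it defines the partial sums $u^{p,M}_{\alpha,\eta}=\sum_{m=1}^M\varphi^p_{m,\alpha,\eta}\sin(mx)$ from the solutions of the one-dimensional problems, verifies via Parseval-type identities that $\|u^{p,M}_{\alpha,\eta}\|_{H^2_*}^2=\frac{\pi}{2}\sum_{m\le M}\|f^p_{m,\alpha,\eta}\|_{L^2}^2\le\|f^p_{\alpha,\eta}\|_{L^2(\Omega)}^2$ (so the sequence is Cauchy), and then passes to the limit in the truncated weak formulation to identify the limit with $u^p_{\alpha,\eta}$ by uniqueness. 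You instead start \emph{from} $u^p_{\alpha,\eta}$, use Parseval and two integrations by parts in $x$ (boundary terms killed by $u=0$ on $\{0,\pi\}\times(-\ell,\ell)$) to get the $H^2_*$ and, via the embedding, $C^0$ convergence of its own sine expansion, and then test the weak formulation against $\psi(y)\sin(mx)$ to extract the one-dimensional variational problem satisfied by each coefficient $u_m$. The two are logically equivalent, and your route avoids the uniqueness-of-limit step. For the closed form of $\varphi^p_{m,\alpha,\eta}$ the paper simply cites \cite[Theorem 5.1]{befafega}, whereas you sketch the derivation: the homogeneous basis $\cosh(my),\sinh(my),y\cosh(my),y\sinh(my)$ of $(\partial_y^2-m^2)^2$, the particular solution by convolution with the fundamental solution $\tfrac{(1+m|y|)e^{-m|y|}}{4m^3}$ of $(m^2-\partial_y^2)^2$ (the self-convolution of $\tfrac{e^{-m|y|}}{2m}$, which indeed reproduces $\Phi_{m,w,\eta}$), and the even/odd splitting matching the $V(\ell)\pm V(-\ell)$, $W(\ell)\mp W(-\ell)$ structure of \eqref{ci} with determinants proportional to $F(m\ell)$ and $\overline F(m\ell)$. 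This outline is consistent with \eqref{ci}, so the only thing you leave undone — as you acknowledge — is the explicit bookkeeping of the $2\times2$ systems, exactly the computation the paper delegates to \cite{befafega}. No gap; just a different order of presentation and a self-contained sketch where the paper cites a reference.
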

	
	\begin{proof}
	First we set
	$$f_{m,\alpha,\eta}^p(y):={\dfrac{2}{\pi}}\int_{0}^{\pi} f^p_{\alpha,\eta}(x,y)\, \sin(mx)\, dx=\dfrac{\chi_{[w-\eta,w+\eta]}(y)}{\pi\eta}\dfrac{\sin(m\alpha)}{m\alpha}\sin(m\rho)\,.$$
	Then, for $M\geq 1$ we define
	\begin{equation*}\label{fouapprox}
	u^{p, M}_{\alpha,\eta}(x,y)(x,y)=\sum_{m=1}^M \varphi_{m,\alpha,\eta}^p(y) \sin(mx)\,
	\end{equation*}
	where, for each $1\leq m\leq M$, $\varphi_m=\varphi_{m,\alpha,\eta}^p(y)$ is the unique solution to the problem:
	\begin{equation}\label{eqppp}
	a_m(\varphi_m,\phi)=(f_{m,\alpha,\eta}^p,\phi)_{L^2(-\ell,\ell)} \qquad \forall\phi\in H^2(-\ell, \ell)\,,
	\end{equation}
	with 
	$$
	a_m(\varphi,\phi):=\int_{-\ell}^\ell [\varphi''\phi''+2m^2(1-\sigma)\varphi'\phi'-\sigma m^2(\varphi''\phi+\varphi\phi'')+m^4\varphi\phi]\,dy
	$$
	continuous and coercive bilinear form in $H^2(-\ell, \ell)$ with associated norm
	$$\|\varphi\|^2_{H^2_m(-\ell, \ell)}:=a_m(\varphi,\varphi)\,.$$
In strong form problem \eqref{eqppp} reads
	\begin{equation*}\label{varphi}
	\begin{cases}
	\varphi_m''''(y)-2m^2\varphi_m''(y)+m^4\varphi_m(y)=f_{m,\alpha,\eta}^p(y)&y\in(-\ell,\ell)\\
	\varphi_m''(\pm\ell)-\sigma m^2\varphi_m(\pm\ell)=0 & \qquad  \\
	\varphi_m'''(\pm\ell)-(2-\sigma)m^2\varphi_m'(\pm\ell)=0\,& \qquad 
	\end{cases}
	\end{equation*}
 and some computations yield that the $\varphi_{m,\alpha,\eta}^p$ are as given in \eqref{varphi_coeff}, see \cite[Theorem 5.1]{befafega} for details. 
 
 The proof of Lemma \ref{convfou} follows by showing that
		$$u^{p,M}_{\alpha,\eta}(x,y)\rightarrow u^p_{\alpha,\eta}(x,y) \quad \text{in }H^2_*(\Omega) \quad \text{as } M\rightarrow+\infty$$
		where $u^p_{\alpha,\eta}(x,y)$ is the unique solution of \eqref{ppp approx}. Let $v\in H^2_*(\Omega)$, it is readily checked that, for $M\geq 1$ fixed, $u^{p,M}_{\alpha,\eta}$ satisfies
		\begin{equation}
		\label{eigenweak2}
		(u^{p,M}_{\alpha,\eta},v^M)_{H^2_*(\Omega)} =(f^{p,M}_{\alpha,\eta}, v^M)_{L^2(\Omega)} \,,
		\end{equation}
		where 
		$$v^M(x,y):=\sum_{m=1}^M v_m(y)\sin(mx) \quad \text{with }  v_m(y):= \frac{2}{\pi} \int_{0}^{\pi} v(x,y)\, \sin(mx)\, dx$$
		and
		$$f^{p,M}_{\alpha,\eta}(x,y)=\sum_{m=1}^M f_{m,\alpha,\eta}^p(y)\sin(mx) \,.$$   
		Since $f^{p}_{\alpha,\eta}\in L^2(\Omega)$, a well-known result for Fourier series yields
		$\int_{0}^{\pi} (f^{p}_{\alpha,\eta}(x,y))^2\,dx=\frac{\pi}{2}\sum_{m=1}^{+\infty}( f_{m,\alpha,\eta}^p(y))^2\,.$
		Hence, by direct computation we infer that
		$$\|f^{p,M}_{\alpha,\eta} \|_{L^2(\Omega)}^2=\frac{\pi}{2}\sum_{m=1}^{M} \|f_{m,\alpha,\eta}^p(y)\|_{L^2(-\ell, \ell)}^2\leq \|f^{p}_{\alpha,\eta} \|_{L^2(\Omega)}^2\,.$$
		From the above inequality we deduce that $f^{p,M}_{\alpha,\eta}$ is a Cauchy sequence in $L^2(\Omega)$ and, in turn, that $f^{p,M}_{\alpha,\eta} \rightarrow f^{p}_{\alpha,\eta}$ in $L^2(\Omega)$ as $M \rightarrow + \infty$.
		Similarly, by the fact that $v\in H^2_*(\Omega)$ we infer that
		$$\int_{0}^{\pi} v_{xx}^2(x,y)\,dx=\frac{\pi}{2}\sum_{m=1}^{+\infty}m^4 v_m^2(y)\,, \quad \int_{0}^{\pi} v_{yy}^2(x,y)\,dx=\frac{\pi}{2}\sum_{m=1}^{+\infty} (v_m''(y))^2$$
		$$\text{and} \quad \int_{0}^{\pi} v_{xy}^2(x,y)\,dx=\frac{\pi}{2}\sum_{m=1}^{+\infty}m^2(v_m'(y))^2\,.$$
		By this, a direct computation yields
		$$\|v^M\|_{H^2_*(\Omega)}^2=\frac{\pi}{2}\sum_{m=1}^{M} \|v_m(y)\|^2_{H^2_m(-\ell, \ell)} \leq \|v\|_{H^2_*(\Omega)}^2 $$
		and, in turn, we conclude that $v^M\rightarrow v \quad \text{in }H^2_*(\Omega)$ as $M \rightarrow + \infty$.
		It remains to prove that $u^{p,M}_{\alpha,\eta}(x,y)\rightarrow u^p_{\alpha,\eta}(x,y) $ in $H^2_*(\Omega)$ for $M\rightarrow+\infty$. Finally, we observe that 
		$$\|u^{p,M}_{\alpha,\eta} \|^2_{H^2_*(\Omega)}=\frac{\pi}{2}\sum_{m=1}^M\|\varphi_{m,\alpha,\eta}^p\|^2_{H^2_m(-\ell,\ell)}=\frac{\pi}{2}\sum_{m=1}^{M} \|f_{m,\alpha,\eta}^p(y)\|_{L^2(-\ell, \ell)}^2 \leq \|f^{p}_{\alpha,\eta}\|_{L^2(\Omega)}^2\,$$
		assuring that $u^{p,M}_{\alpha,\eta}\rightarrow u^p_{\alpha,\eta}$ in $H^2_*(\Omega)$ as $M\rightarrow+\infty$. For what remarked, the proof of the statement follows by passing to the limit in \eqref{eigenweak2}.
	\end{proof}

In order to write explicitly the Fourier series of $G_p$, for all $m\in \N_+$, we set
	 \begin{equation}\label{soluzdirac2}
		\begin{split}
		\phi_m(y,w):=
		&e^{-m\ell}\bigg[\cosh(m w)\bigg(\dfrac{\overline \zeta(my,m\ell)}{F(m\ell)}+m\ell\dfrac{\overline \psi(my,m\ell) }{F(m\ell)}-m w\dfrac{\overline \xi(my,m\ell)}{\overline F(m\ell)}\bigg)\\&+\sinh(m w)\bigg(\dfrac{\overline\eta(my,m\ell)}{\overline F(m\ell)}+m\ell\dfrac{\overline \xi(my,m\ell) }{\overline F(m\ell)}-mw\dfrac{\overline\psi(my,m\ell)}{F(m\ell)}\bigg)\bigg] 
		\\&+(1+m|y-w|)e^{-m|y-w|}
		\end{split}
		\end{equation}
where the functions $F,\overline F: (0,+\infty)\rightarrow \mathbb{R}$ and $\overline\zeta, \overline\eta, \overline\psi, \overline\xi: \mathbb{R}\times(0,+\infty)\rightarrow \mathbb{R}$ are defined as follows
	\begin{equation}\label{cost1}
	\begin{split}
	F(z):=&\dfrac{(3+\sigma)}{2}\sinh(2z)-z (1-\sigma)\,,\\
	\overline F(z):=&\dfrac{(3+\sigma)}{2}\sinh(2z)+z (1-\sigma)\,,\\
	\overline\zeta(r,z):=&\bigg(\dfrac{4}{1-\sigma}-z(1+\sigma)\bigg)\cosh(r)\cosh(z)+\bigg(\dfrac{(1+\sigma)^2}{1-\sigma}+2z\bigg)\cosh(r)\sinh(z)\\&-2r\sinh(r)\cosh(z)+r(1+\sigma)\sinh(r)\sinh(z)\\
	\overline \eta(r,z):=&r(1+\sigma)\cosh(r)\cosh(z)-2r\cosh(r)\sinh(z)\\&+\bigg(\dfrac{(1+\sigma)^2}{1-\sigma}+2z\bigg)\sinh(r)\cosh(z)+\bigg(\dfrac{4}{1-\sigma}-z(1+\sigma)\bigg)\sinh(r)\sinh(z)\\
	\overline\psi(r,z):=&\big(2+(1-\sigma)z\big)\cosh(r)\cosh(z)+\big(-(1+\sigma)+z(1-\sigma)\big)\cosh(r)\sinh(z)\\&-r(1-\sigma)\sinh(r)\cosh(z)-r(1-\sigma)\sinh(r)\sinh(z)\\
	\overline \xi(r,z):=&-r(1-\sigma)\cosh(r)\cosh(z)-r(1-\sigma)\cosh(r)\sinh(z)\\&+\big(-(1+\sigma)+z(1-\sigma)\big)\sinh(r)\cosh(z)+\big(2+(1-\sigma)z\big)\sinh(r)\sinh(z)\,.
	\end{split}
	\end{equation}
	
	From Lemma \ref{convfou} we derive:
	\begin{proposition}\label{green expansion}
	Let $\sigma\in[0,1)$ and $p=(\rho,w)\in \overline \Omega$, furthermore let $G_p\in H^2_*(\Omega)$ be as in \eqref{greeneq}. Then,
\begin{equation}\label{green}
	G_p(x,y)=\sum_{m=1}^{+\infty} \dfrac{1}{2\pi}\dfrac{\phi_m(y,w)}{m^3}\sin(m\rho)\, \sin(mx) \qquad \forall(x,y)\in \overline \Omega\,,
\end{equation}
		where the functions $\phi_m(y,w)$ are given in  \eqref{soluzdirac2}. Moreover, the series in \eqref{green} converges in $H^2_*(\Omega)$ and in $C^0(\overline{\Omega})$.
	\end{proposition}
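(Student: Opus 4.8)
The plan is to derive \eqref{green} by passing to the limit $(\alpha,\eta)\to(0,0)$ in the explicit Fourier expansion of $u^p_{\alpha,\eta}$ obtained in Lemma \ref{convfou}, and to use Lemma \ref{conv green}, which gives $u^p_{\alpha,\eta}\to G_p$ both in $H^2_*(\Omega)$ and in $C^0(\overline\Omega)$, to identify the limit. I would first prove the statement for $p=(\rho,w)\in\Omega$ (the case covered by the construction leading to Lemma \ref{convfou}) and then recover $p\in\overline\Omega$ at the very end: by \eqref{greeneq} and the continuous embedding of $H^2_*(\Omega)$ into a H\"older space the map $p\mapsto G_p$ is continuous from $\overline\Omega$ to $C^0(\overline\Omega)$, while the right-hand side of \eqref{green} is continuous in $(\rho,w)\in[0,\pi]\times[-\ell,\ell]$ (and vanishes for $\rho\in\{0,\pi\}$), so the identity extends to the closure.

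First I would fix $m\in\N_+$ and compute the limit of the coefficient $\varphi^p_{m,\alpha,\eta}(y)$ given by \eqref{varphi_coeff}. Since $w\in(-\ell,\ell)$, for $\eta$ small the interval $[w-\eta,w+\eta]$ stays uniformly away from $\pm\ell$, so on a neighbourhood of $y=\pm\ell$ the kernel in \eqref{Phimweta} is smooth (indeed analytic) in $(y,t)$, and $\Phi_{m,w,\eta}$, together with its first three $y$-derivatives, converges there as $\eta\to0$ to $\Psi_m(y,w):=\tfrac{(1+m|y-w|)e^{-m|y-w|}}{4m^3}$ and its derivatives; moreover $\Phi_{m,w,\eta}(y)\to\Psi_m(y,w)$ pointwise for every $y\in[-\ell,\ell]$ by continuity of the kernel. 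Hence each of $V_{m,w,\eta}(\pm\ell)$, $W_{m,w,\eta}(\pm\ell)$ and, in turn, each constant $c_i(m,w,\eta)$ in \eqref{ci} converges to an explicit limit; combined with $\tfrac{\sin(m\alpha)}{m\alpha}\to1$, formula \eqref{varphi_coeff} then yields the pointwise limit of $\varphi^p_{m,\alpha,\eta}(y)$ as a linear combination of $\cosh(my),\sinh(my),y\cosh(my),y\sinh(my)$ plus $\Psi_m(y,w)$, all multiplied by $\tfrac{2}{\pi}\sin(m\rho)$.

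The next step — which I expect to be the only genuinely heavy one — is the algebraic identification: checking that this limit equals exactly $\tfrac{1}{2\pi m^3}\phi_m(y,w)\sin(m\rho)$ with $\phi_m$ as in \eqref{soluzdirac2}. This amounts to writing the limiting constants through $F,\overline F,A_m,B_m,\overline A_m,\overline B_m$ and the boundary values of $\Psi_m(\cdot,w)$ and its derivatives at $\pm\ell$, re-expressing the hyperbolic building blocks in terms of $\cosh(mw),\sinh(mw)$ and the auxiliary functions $\overline\zeta,\overline\eta,\overline\psi,\overline\xi$ of \eqref{cost1}, and factoring out $e^{-m\ell}$; the term $\Psi_m$ produces directly, after multiplication by $4m^3$, the summand $(1+m|y-w|)e^{-m|y-w|}$ in \eqref{soluzdirac2}. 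This is the computation already performed in \cite[Theorem 5.1]{befafega} for the corresponding ODE boundary value problem, so I would invoke or adapt it rather than redo it from scratch.

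Finally I would assemble the series. Rather than interchanging limit and sum directly, I would use that $G_p\in H^2_*(\Omega)$ has its own Fourier-sine expansion $G_p(x,y)=\sum_{m\ge1}g^p_m(y)\sin(mx)$, convergent in $H^2_*(\Omega)$, with $g^p_m(y)=\tfrac2\pi\int_0^\pi G_p(x,y)\sin(mx)\,dx$; by the Parseval-type identity already exploited in the proof of Lemma \ref{convfou}, the projection onto the $m$-th mode is continuous from $H^2_*(\Omega)$ to $H^2_m(-\ell,\ell)$, so the $H^2_*$-convergence $u^p_{\alpha,\eta}\to G_p$ forces $\varphi^p_{m,\alpha,\eta}\to g^p_m$ in $H^2_m(-\ell,\ell)$, in particular uniformly on $[-\ell,\ell]$. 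Comparing with the pointwise limit found above gives $g^p_m(y)=\tfrac{1}{2\pi m^3}\phi_m(y,w)\sin(m\rho)$, which is \eqref{green}; convergence in $H^2_*(\Omega)$ is then automatic, and convergence in $C^0(\overline\Omega)$ follows from $H^2_*(\Omega)\hookrightarrow C^0(\overline\Omega)$. The main obstacle throughout is the bookkeeping of the algebraic identification step — the compact form \eqref{soluzdirac2} conceals a substantial manipulation of hyperbolic identities starting from the four constants $c_i$ — whereas the limiting and convergence arguments are comparatively soft.
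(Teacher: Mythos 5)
Your proposal is correct and follows essentially the same strategy as the paper: pass to the limit $(\alpha,\eta)\to(0,0)$ in the expansion of $u^p_{\alpha,\eta}$ from Lemma \ref{convfou}, identify the limiting coefficients with $\tfrac{1}{2\pi m^3}\phi_m(y,w)\sin(m\rho)$, and then treat the case $p\in\partial\Omega$ and the convergence of the series separately. The one place where your route diverges from the paper is the final convergence argument, and your variant is in fact cleaner. The paper shows convergence of the series in $H^2_*(\Omega)$ and $C^0(\overline\Omega)$ by a Weierstrass-type bound $|g_m^p(y)\sin(mx)|\leq\tfrac{C}{m^3}$ which rests on the uniform bound $0<\phi_m\leq\|\phi_1\|_\infty$, i.e.\ on the monotonicity property \eqref{decreasing} — a forward reference to Section \ref{monot}. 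You instead invoke the Parseval-type identity already proved inside Lemma \ref{convfou}, from which it follows for free that the Fourier-sine partial sums of any $v\in H^2_*(\Omega)$ converge to $v$ in $H^2_*(\Omega)$; applying this to $v=G_p$ gives $H^2_*$-convergence of the series without any reference to the as-yet-unproved \eqref{decreasing}, and $C^0$-convergence then falls out of the embedding $H^2_*(\Omega)\hookrightarrow C^0(\overline\Omega)$. This avoids the (harmless but inelegant) forward dependence in the paper's proof.

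One small imprecision worth noting: you justify the extension to $p\in\overline\Omega$ by saying the right-hand side of \eqref{green} is continuous in $(\rho,w)$, but uniform-in-$p$ convergence of the series is exactly what is not yet available at that stage. The correct and essentially effortless repair is the one you already hint at in the last paragraph — argue coefficient by coefficient: $\delta_{p_n}\to\delta_{\overline p}$ in $H^{-2}_*(\Omega)$ gives $G_{p_n}\to G_{\overline p}$ in $H^2_*(\Omega)$, hence $g_m^{p_n}\to g_m^{\overline p}$ in $H^2_m(-\ell,\ell)$, while $\tfrac{\phi_m(\cdot,w_n)}{2\pi m^3}\sin(m\rho_n)\to\tfrac{\phi_m(\cdot,\overline w)}{2\pi m^3}\sin(m\overline\rho)$ pointwise; this identifies $g_m^{\overline p}$ and closes the extension. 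That is also how the paper argues.
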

\begin{proof}
	Let $u^p_{\alpha,\eta}\in H^2_*(\Omega)$ be the unique solution to \eqref{ppp approx}; by expanding in Fourier series, we have that
	$$u^p_{\alpha,\eta}(x,y)=\sum_{m=1}^{+\infty} \varphi_{m,\alpha,\eta}^p(y)\sin(mx) \quad \text{and } \quad G_p(x,y)=\sum_{m=1}^{+\infty} g_m^p(y)\sin(mx)$$
	with
	$$\varphi_{m,\alpha,\eta}^p(y)= \frac{2}{\pi} \int_{0}^{\pi} u^p_{\alpha,\eta}(x,y)\, \sin(mx)\, dx \quad \text{and } \quad g_m^p(y)=  \frac{2}{\pi} \int_{0}^{\pi} G_p(x,y)\, \sin(mx)\, dx\,.$$
	
	Passing to the limit above and thanks to Lemma \ref{conv green}, we infer that 
	$$\varphi_{m,\alpha,\eta}^p(y) \rightarrow g_m^p(y)\quad \text{ in } C^0([-\ell, \ell]) \qquad \text{as } (\alpha,\eta)\rightarrow(0,0)\,.$$
	
	On the other hand, from Lemma \ref{convfou} we know that the $\varphi_{m,\alpha,\eta}^p$ write as in \eqref{varphi_coeff}.
	Now, as $\eta\rightarrow 0$, a direct inspection reveals that:
	$$\Phi_{m,w,\eta}(y) \rightarrow \frac{(1+m|y-w|)e^{-m|y-w|}}{4m^3}:=\overline \Phi_m(y,w)\quad \text{ in } C^0([-\ell, \ell])$$
	$$V_{m,w,\eta}(\ell)\rightarrow \frac{e^{-m(\ell-w)}}{4m}(1+\sigma-m(\ell-w)(1-\sigma))  \,,\quad \quad W_{m,w,\eta}(\ell)\rightarrow \frac{e^{-m(\ell-w)}}{4}(2+m(\ell-w)(1-\sigma)) \,$$
	and
	$$V_{m,w,\eta}(-\ell)\rightarrow  \frac{e^{-m(\ell+w)}}{4m}(1+\sigma-m (\ell+w)(1-\sigma)) \,,\quad \quad W_{m,w,\eta}(-\ell)\rightarrow -\frac{e^{-m(\ell+w)}}{4}(2+m(\ell+w)(1-\sigma))\,.$$
	Therefore,
	\small
	\begin{equation*}
	\begin{split}
	c_1(m,w,\eta)\rightarrow \overline c_1(m,w):=&\dfrac{e^{-m\ell}}{4m^3F(m\ell)}\bigg[\bigg(m^2\ell^2(1-\sigma)+m\ell(1-\sigma)+\dfrac{(1+\sigma)^2}{1-\sigma}\bigg)\sinh(m\ell)\cosh(mw)\\&\bigg(m^2\ell^2(1-\sigma)+m\ell(1-\sigma)+\dfrac{4}{1-\sigma}\bigg)\cosh(m\ell)\cosh(mw)\\&-[2+m\ell(1-\sigma)]mw\cosh(m\ell)\sinh(mw)+[1+\sigma-m\ell(1-\sigma)]mw\sinh(m\ell)\sinh(mw)\bigg]\\c_2(m,w,\eta)\rightarrow \overline c_2(m,w):=&\dfrac{e^{-m\ell}}{4m^3\overline F(m\ell)}\bigg[\bigg(m^2\ell^2(1-\sigma)+m\ell(1-\sigma)+\dfrac{(1+\sigma)^2}{1-\sigma}\bigg)\sinh(mw)\cosh(m\ell)\\&\bigg(m^2\ell^2(1-\sigma)+m\ell(1-\sigma)+\dfrac{4}{1-\sigma}\bigg)\sinh(m\ell)\sinh(mw)\\&-[2+m\ell(1-\sigma)]mw\sinh(m\ell)\cosh(mw)+[1+\sigma-m\ell(1-\sigma)]mw\cosh(m\ell)\cosh(mw)\bigg]\\c_3(m,w,\eta)\rightarrow \overline c_3(m,w):=&\dfrac{e^{-m\ell}}{2m^2\overline F(m\ell)}\bigg[\big(1+\sigma-m\ell(1-\sigma)\big)\sinh(mw)\cosh(m\ell)+(1-\sigma)mw\sinh(m\ell)\cosh(mw)\\&-\big(2+m\ell(1-\sigma)\big)\sinh(m\ell)\sinh(mw)+(1-\sigma)mw\cosh(m\ell)\cosh(mw)\bigg]\\c_4(m,w,\eta)\rightarrow \overline c_4(m,w):=&\dfrac{e^{-m\ell}}{2m^2 F(m\ell)}\bigg[\big(1+\sigma-m\ell(1-\sigma)\big)\sinh(m\ell)\cosh(mw)+(1-\sigma)mw\sinh(mw)\sinh(m\ell)\\&-\big(2+m\ell(1-\sigma)\big)\cosh(m\ell)\cosh(mw)+(1-\sigma)mw\cosh(m\ell)\sinh(mw)\bigg]\,.
	\end{split}
	\end{equation*}
	\normalsize
	The above limits inserted in \eqref{varphi_coeff} yield
	$$\varphi_{m,\alpha,\eta}^p(y) \rightarrow\frac{ \phi_m(y,w)}{2\pi m^3}\sin(m\rho)\qquad \text{as }(\alpha,\eta)\rightarrow (0,0),$$ 	
	where $\phi_m$ is as given in \eqref{soluzdirac2}, which proves \eqref{green} for all $p\in\Omega$. Let now $\overline p \in \overline \Omega$ and let $G_{\overline{p}}$ be the corresponding solution to  \eqref{greeneq}. It is readily seen that $
		\delta_{p_n}\rightarrow\delta_{\overline p}$ in $H^{-2}_*(\Omega)$ for all $\{ p_n\}\subset \Omega: p_n\rightarrow \overline{p}$; then, arguing as in Lemma \ref{conv green}, it follows that $G_{p_{n}}\rightarrow G_{\overline{p}}$ in $H^2_*(\Omega)$ and, consequently, in $C^0(\overline \Omega)$. By this we infer that \eqref{green} extends continuously to all $p\in\overline{\Omega}$.

The convergence of the series \eqref{green} in $H^2_*(\Omega)$ and in $C^0(\overline{\Omega})$ can be easily checked by exploiting the monotonicity property \eqref{decreasing} (see Section \ref{monot} for the proof). Indeed, we have
				$$
		|g_m^p(y)\sin(mx)|\leq \dfrac{1}{2\pi}\dfrac{\phi_m(y,w)}{m^3}|\sin(m\rho)|\leq  \dfrac{1}{2\pi}\dfrac{\|\phi_1\|_{\infty}}{m^3}\leq \dfrac{C}{m^3},
			$$
			by which the convergence in  $C^0(\overline{\Omega})$ follows at once. The convergence in $H^2_*(\Omega)$ follows from similar estimates.
		
\end{proof}

\section{Proof of Theorem \ref{monotonia}}\label{proof1}
The first part of the statement, namely the Fourier expansion of the Green function, has already been derived in the previous section, see Proposition \ref{green expansion}. Here we focus on the sign and monotonicity properties of the functions $\phi_m(y,w)$.

\subsection{Proof of the monotonicity issue in \eqref{decreasing}}\label{monot}
We rewrite the functions $\phi_m(y,w)$ in a more convenient way; to this aim we introduce the functions $\zeta, \eta, \psi, \xi: [-1,1]\times(0,+\infty)\rightarrow \mathbb{R}$:
\begin{equation}\label{cost}
\begin{split}
\zeta(k,z):=\overline\zeta(kz,z)\,,\quad
\eta(k,z):= \overline \eta(kz,z)\,, \quad
\psi(k,z):=\overline \psi(kz,z)\,, \quad
\xi(k,z):= \overline \xi(kz,z),
\end{split}
\end{equation}
where $\overline\zeta, \overline\eta, \overline\psi, \overline\xi$ are given in \eqref{cost1}. See the proof of Lemma \ref{parita} in the Appendix for the explicit form of the above functions.
Putting into \eq{soluzdirac2} $z=m\ell>0$, $y=k\ell$ with $k\in[-1,1]$ and $w=s\ell$ with $s\in[-1,1]$, each $\phi_m(y,w)$ rewrites as the three variable function:
\begin{equation}\label{soluzdiracz}
\begin{split}
\phi(s,k,z)= e^{-z}g(s,k,z)+h(s,k,z),
\end{split}
\end{equation} 
where
\begin{equation*}
g(s,k,z):=\cosh(sz)\bigg(\dfrac{\zeta(k,z)}{F(z)}+z\dfrac{\psi(k,z) }{F(z)}-sz\dfrac{\xi(k,z)}{\overline F(z)}\bigg)+\sinh(sz)\bigg(\dfrac{\eta(k,z)}{\overline F(z)}+z\dfrac{\xi(k,z) }{\overline F(z)}-sz\dfrac{\psi(k,z)}{F(z)}\bigg)\\
\end{equation*}
and
\begin{equation*}
h(s,k,z):=(1+z|k-s|)e^{-z|k-s|}\,.
\end{equation*}
 It is readily seen that the monotonicity issue \eqref{decreasing} follows by showing that the function $\phi(s,k,z)$ is decreasing with respect to $z>0$ for all $k,s\in[-1,1]$, i.e. 
$$
 \phi_z(s,k,z)=e^{-z}\big(g_z(s,k,z)-g(s,k,z)\big)+h_{z}(s,k,z)<0\qquad \forall z>0, \forall k,s \in[-1,1].
$$
 Since $h_{z}(s,k,z)=-(k-s)^2ze^{-z|k-s|} \leq 0$, for all $z>0$ and $k,s\in[-1,1]$, a sufficient condition for the validity of the above inequality is:
 \begin{equation}\label{g_monotone}
 g_{z}(s,k,z)-g(s,k,z)<0\qquad \forall z>0, \forall k,s \in[-1,1].
 \end{equation}
 The proof of this inequality will be the goal of this section. To this aim we compute 
$$
g_{z}(s,k,z)-g(s,k,z)= W(s,k,z)\cosh(sz)+Q(s,k,z)\sinh(sz)
$$
in which we set
\small
$$
W(s,k,z):=\bigg[\dfrac{\zeta(k,z)}{F(z)}+z\dfrac{\psi(k,z)}{F(z)}\bigg]_z-\bigg[\dfrac{\zeta(k,z)}{F(z)}+z\dfrac{\psi(k,z)}{F(z)}\bigg]+s\bigg(\dfrac{\eta(k,z)}{\overline F(z)}+2z\dfrac{\xi(k,z)}{\overline F(z)}-\bigg[z\dfrac{\xi(k,z)}{\overline F(z)}\bigg]_z\bigg)-s^2z\dfrac{\psi(k,z)}{F(z)}
$$
and
$$
Q(s,k,z):=\bigg[\dfrac{\eta(k,z)}{\overline F(z)}+z\dfrac{\xi(k,z)}{\overline F(z)}\bigg]_z-\bigg[\dfrac{\eta(k,z)}{\overline F(z)}+z\dfrac{\xi(k,z)}{\overline F(z)}\bigg]+s\bigg(\dfrac{\zeta(k,z)}{ F(z)}+2z\dfrac{\psi(k,z)}{ F(z)}-\bigg[z\dfrac{\psi(k,z)}{F(z)}\bigg]_z\bigg)-s^2z\dfrac{\xi(k,z)}{\overline F(z)}.
$$
\normalsize

In view of the elementary implication:
\begin{equation}\label{lemma0}
W(z)+|Q(z)|<0 \quad\Rightarrow\quad  W(z)\cosh(\omega z)+Q(z)\sinh(\omega z)<0\qquad\forall z>0,\forall \omega \in\mathbb{R}
\end{equation}
for all $W, Q:(0,+\infty)\rightarrow\mathbb{R}$ continuous functions, it follows that a sufficient condition for \eqref{g_monotone} to hold is 
\begin{equation}\label{suff}
W(s,k,z)+Q(s,k,z)<0 \quad   \wedge \quad W(s,k,z)-Q(s,k,z)<0 \quad \forall z>0\,,\forall k\,s\in[-1,1]\,.
\end{equation}
 We consider
\begin{equation}\label{parabolas}
\begin{split}
&W(s,k,z)+Q(s,k,z)=-s^2z\bigg[\dfrac{\psi(k,z)}{F(z)}+\dfrac{\xi(k,z)}{\overline F(z)}\bigg]\\
&+s\bigg(\dfrac{\eta(k,z)}{\overline F(z)}+\dfrac{\zeta(k,z)}{ F(z)}+2z\dfrac{\xi(k,z)}{\overline F(z)}+2z\dfrac{\psi(k,z)}{ F(z)}-\bigg[z\dfrac{\xi(k,z)}{\overline F(z)}+z\dfrac{\psi(k,z)}{F(z)}\bigg]_z\bigg)\\
&+\bigg[\dfrac{\zeta(k,z)}{F(z)}+z\dfrac{\psi(k,z)}{F(z)}+\dfrac{\eta(k,z)}{\overline F(z)}+z\dfrac{\xi(k,z)}{\overline F(z)}\bigg]_z-\bigg[\dfrac{\zeta(k,z)}{F(z)}+z\dfrac{\psi(k,z)}{F(z)}+\dfrac{\eta(k,z)}{\overline F(z)}+z\dfrac{\xi(k,z)}{\overline F(z)}\bigg]\\
&W(s,k,z)-Q(s,k,z)=-s^2z\bigg[\dfrac{\psi(k,z)}{F(z)}-\dfrac{\xi(k,z)}{\overline F(z)}\bigg]\\
&+s\bigg(\dfrac{\eta(k,z)}{\overline F(z)}-\dfrac{\zeta(k,z)}{ F(z)}+2z\dfrac{\xi(k,z)}{\overline F(z)}-2z\dfrac{\psi(k,z)}{ F(z)}-\bigg[z\dfrac{\xi(k,z)}{\overline F(z)}-z\dfrac{\psi(k,z)}{F(z)}\bigg]_z\bigg)\\
&+\bigg[\dfrac{\zeta(k,z)}{F(z)}+z\dfrac{\psi(k,z)}{F(z)}-\dfrac{\eta(k,z)}{\overline F(z)}-z\dfrac{\xi(k,z)}{\overline F(z)}\bigg]_z-\bigg[\dfrac{\zeta(k,z)}{F(z)}+z\dfrac{\psi(k,z)}{F(z)}-\dfrac{\eta(k,z)}{\overline F(z)}-z\dfrac{\xi(k,z)}{\overline F(z)}\bigg]\,.
\end{split}
\end{equation}
The maps $[-1,1]\ni s\mapsto W(s,k,z)\pm Q(s,k,z)$ are concave parabolas for all $z>0$ and $k\in[-1,1]$ fixed. Indeed, we have
\begin{equation}\label{dis1}
\dfrac{\psi(k,z)}{F(z)}\pm\dfrac{\xi(k,z)}{\overline F(z)}>0\qquad\forall z>0, k\in[-1,1].
\end{equation}
Furthermore, there holds:

\begin{align}\label{vertice}
	\bigg[z\dfrac{\psi(k,z)}{F(z)}\pm z\dfrac{\xi(k,z)}{\overline F(z)}\bigg]_z-\bigg[\dfrac{\zeta(k,z)}{F(z)}\pm\dfrac{\eta(k,z)}{\overline F(z)}\bigg]< 0\qquad &k\in[-1,1],\forall z>0
	\end{align}
	and
	\begin{align}\label{maximum}
	 \bigg[\dfrac{\zeta(k,z)}{F(z)}\pm\dfrac{\eta(k,z)}{\overline F(z)}\bigg]_z<0\qquad &\forall k\in[-1,1],\forall z>0.
\end{align}

The first condition assures that the abscissa $\overline s$ of the parabolas vertex satisfies, respectively, $\overline s>1$ or $\overline s<-1$, implying that the maximum is achieved, respectively, at $s=1$ or at $s=-1$; condition \eq{maximum} implies the negativity of such maxima proving \eqref{suff} and, in turn, \eqref{g_monotone}. We postpone the (long) proofs of \eq{dis1}, \eq{vertice} and \eq{maximum}, respectively, to Sections \ref{varie}, \ref{varie1} and \ref{varie2} below. \par 
\begin{remark}\label{proof idea}
It's worth pointing out that the proofs of \eq{vertice} and \eq{maximum} are achieved by repeating several times the scheme outlined above, i.e. we first put in evidence an expression of the type: $W\cosh(\omega z)+Q\sinh(\omega z)$, for suitable functions $W$ and $Q$, and then, in order to show that this expression is always negative, we exploit \eq{lemma0} and we come to study the sign of $W\pm Q$. As in \eq{parabolas}, the functions $W\pm Q$ can always be seen as parabolas with respect to one of the variables: we locate the maximum point of these parabolas and we estimate the sign of the maximum in a suitable interval. The advantage of this procedure is that, at each step, we obtain a reduction of the number of variables. Indeed, we start with the three variables functions $W$ and $Q$ in \eq{parabolas} and we reduce to two or one variables functions, see e.g. \eq{ts} below.
\end{remark}

\begin{remark}\label{knegative}
	Except for \eq{vertice}, all steps in the proof of the monotonicity issue  \eq{decreasing} hold for all $\sigma\in(-1,1)$. Our numerical experiments suggest that \eq{suff} is still satisfied when $\sigma \in(-1,0)$ but the vertex of the parabolas $s\mapsto W(s,k,z)\pm Q(s,k,z)$ in \eq{parabolas}, differently to what happens for $\sigma \in[0,1)$, may belong to the interval $[-1,1]$. Therefore, to extend the proof to the case $\sigma \in(-1,0)$, condition \eq{maximum} should be modified accordingly.
\end{remark}

	\subsection{Proof of the positivity issue in \eqref{decreasing}.}
	By \eq{soluzdiracz} the sign of $\phi_m(y,w)$ is the same of
the function $\phi(s,k,z)=e^{-z}g(s,k,z)+h(s,k,z)$. Since $h(s,k,z)>0$ for all $z>0$, $k,s\in[-1,1]$, to obtain the positivity of $\phi_m$ in \eqref{decreasing} we prove that $e^{-z}g(s,k,z)\geq 0$ for all $z>0$, $k,s\in[-1,1]$.\par
	For $z\rightarrow +\infty$, by direct inspection, we get that $e^{-z}g(s,k,z)\rightarrow L$ with $L=0$ for all $k,s\neq \pm 1$ and $L=\frac{4+(1+\sigma)^2}{(1-\sigma)^2}$ if $k=s=\pm 1$. Therefore, the strict monotonicity of $e^{-z}g(s,k,z)$ proved in Section \ref{monot} assures the positivity of $\phi(z,k,s)$, i.e. the positivity issue in \eqref{decreasing}.

\subsection{Proofs of inequality \eq{dis1}.}\label{varie}
Here and after, we will exploit the inequalities
\begin{equation}\label{dis2}
2\cosh(\omega z)-(1+\sigma)\sinh(\omega z)>0\qquad  \forall z>0\,, \forall \omega \in \R\,,
\end{equation}
\begin{equation}\label{ineqvarie}
\dfrac{1}{F(z)}\pm\dfrac{1}{\overline F(z)}>0\quad \text{and}\qquad \dfrac{1}{[F(z)]^2}\pm\dfrac{1}{[\overline F(z)]^2}>0\qquad  \forall z>0\,,
\end{equation}
where $F(z)$ and $\overline F(z)$ are as in \eq{cost1}. The proof of \eq{dis2} is immediate while inequality \eqref{ineqvarie} simply follows by noticing that $\overline F(z)>F(z)>0$ for all $z>0$.\par
Next we prove \eq{dis1}.
\begin{lemma}\label{lemma2}
	Given $F(z)$, $\overline F(z)$ as in \eq{cost1} and $\psi(k,z)$, $\xi(k,z)$ as in \eq{cost}, we have
\begin{equation}\label{dis1BIS}
\dfrac{\psi(k,z)}{F(z)}\pm\dfrac{\xi(k,z)}{\overline F(z)}>0\qquad\forall z>0, k\in[-1,1].
\end{equation}
\end{lemma}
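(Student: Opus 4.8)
The plan is to reduce \eqref{dis1BIS} to the two scalar inequalities
\[
\psi(k,z)+\xi(k,z)>0 \qquad\text{and}\qquad \psi(k,z)-\xi(k,z)>0 \qquad\forall z>0,\ k\in[-1,1],
\]
and then recombine them by means of \eqref{ineqvarie}. Indeed, writing $\psi=\tfrac{(\psi+\xi)+(\psi-\xi)}{2}$ and $\xi=\tfrac{(\psi+\xi)-(\psi-\xi)}{2}$ one gets
\[
\frac{\psi}{F}\pm\frac{\xi}{\overline F}=\frac{\psi+\xi}{2}\left(\frac{1}{F}\pm\frac{1}{\overline F}\right)+\frac{\psi-\xi}{2}\left(\frac{1}{F}\mp\frac{1}{\overline F}\right),
\]
and since $\tfrac1F\pm\tfrac1{\overline F}>0$ by \eqref{ineqvarie} (which in turn only uses $0<F(z)<\overline F(z)$), once the two scalar inequalities are established the conclusion is immediate.

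To prove $\psi\pm\xi>0$ I would use the explicit forms of $\overline\psi,\overline\xi$ in \eqref{cost1} (see also Lemma \ref{parita} in the Appendix) with $r=kz$, collect the coefficients of the four products $\cosh(kz)\cosh z$, $\cosh(kz)\sinh z$, $\sinh(kz)\cosh z$, $\sinh(kz)\sinh z$, and apply the addition formulas for $\cosh$ and $\sinh$. A direct computation gives the collapse
\[
\psi\pm\xi=\big[\,2+(1-\sigma)z(1\mp k)\,\big]\cosh\!\big((1\pm k)z\big)+\big[\,(1-\sigma)z(1\mp k)-(1+\sigma)\,\big]\sinh\!\big((1\pm k)z\big).
\]
Rewriting this, with $\omega:=1\pm k\ge 0$, as $\big[\,2\cosh(\omega z)-(1+\sigma)\sinh(\omega z)\,\big]+(1-\sigma)z(1\mp k)\,e^{\omega z}$ makes the sign manifest: the bracketed term is strictly positive by \eqref{dis2}, while the remainder is nonnegative since $z>0$, $\sigma\in[0,1)$ and $k\in[-1,1]$. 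Hence $\psi\pm\xi>0$, which completes the argument.

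The only genuine work is the sign bookkeeping in the cancellation that yields the displayed formula for $\psi\pm\xi$, keeping track of the identities $\cosh(kz)\cosh z\pm\sinh(kz)\sinh z=\cosh((1\pm k)z)$ and $\cosh(kz)\sinh z\pm\sinh(kz)\cosh z=\sinh((1\pm k)z)$; this is routine. I do not expect any conceptual obstacle: the content of Lemma \ref{lemma2} is precisely that the mixed-denominator expression $\tfrac{\psi}{F}\pm\tfrac{\xi}{\overline F}$ can be traded for the numerator combinations $\psi\pm\xi$, which, thanks to $0<F(z)<\overline F(z)$, inherit the required positivity.
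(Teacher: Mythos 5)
Your proof is correct and follows essentially the same route as the paper: reduce \eqref{dis1BIS} to the numerator inequalities $\psi\pm\xi>0$, verify these by collapsing $\psi\pm\xi$ into a single-argument hyperbolic expression and invoking \eqref{dis2}, and recombine using \eqref{ineqvarie}. The only cosmetic difference is in the recombination step: the paper first proves $\psi>0$ separately (via $\cosh(kz)-k\sinh(kz)>0$) and then cites the first of \eqref{ineqvarie}, whereas your symmetric identity $\frac{\psi}{F}\pm\frac{\xi}{\overline F}=\frac{\psi+\xi}{2}\big(\frac1F\pm\frac1{\overline F}\big)+\frac{\psi-\xi}{2}\big(\frac1F\mp\frac1{\overline F}\big)$ bypasses that auxiliary step and is, if anything, slightly cleaner.
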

\begin{proof}
We observe that
\begin{equation*}
\begin{split}
	\psi(k,z)=&\big(2+(1-\sigma)z\big)\cosh(kz)\cosh(z)+\big(-(1+\sigma)+z(1-\sigma)\big)\cosh(kz)\sinh(z)\\&-kz(1-\sigma)\sinh(kz)\cosh(z)-kz(1-\sigma)\sinh(kz)\sinh(z)>0  \qquad \forall z>0, k\in[-1,1],
\end{split}
\end{equation*} thanks to \eq{dis2} and $\cosh(kz)-k\sinh(kz)>0$ for all $z>0$ and $k\in[-1,1]$. Hence, through the first of \eq{ineqvarie}, a sufficient condition for the validity of \eq{dis1BIS} is $\psi(k,z)\pm\xi(k,z)>0$.
But by \eq{dis2} we immediately deduce
\begin{equation*}
\begin{split}
\psi+\xi=&\big(2+(1-\sigma)(1-k)z\big)\cosh[(1+k)z]+\big(-(1+\sigma)+z(1-\sigma)(1-k)\big)\sinh[(1+k)z]>0,\\
\psi-\xi=&\big(2+(1-\sigma)(1+k)z\big)\cosh[(1-k)z]+\big(-(1+\sigma)+z(1-\sigma)(1+k)\big)\sinh[(1-k)z]>0,
\end{split}
\end{equation*}
for all $z>0$ and $k\in[-1,1]$. This concludes the proof.
\end{proof}

\subsection{Proof of inequality \eq{vertice}.}\label{varie1}
The proof of \eq{vertice} is given in Lemma \ref{lemma6}.
\begin{lemma}\label{lemma6}
	Given $F(z)$, $\overline F(z)$ as in \eq{cost1} and $\zeta(k,z)$, $\eta(k,z)$, $\psi(k,z)$, $\xi(k,z)$ as in \eq{cost}, we have that
	\begin{align} \label{dis8}
	&\bigg[z\dfrac{\psi}{F}\pm z\dfrac{\xi}{\overline F}\bigg]_z-\bigg[\dfrac{\zeta}{F}\pm\dfrac{\eta}{\overline F}\bigg] = \dfrac{(\psi+z\psi_z-\zeta) F-z\psi F'}{F^2}\pm\dfrac{(\xi+z\xi_z-\eta)\overline F-z\xi \overline F'}{\overline F^2}<0
	\end{align}
	for all $k\in[-1,1]$ and for all $z>0$.
\end{lemma}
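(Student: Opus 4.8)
The equality in the middle of \eqref{dis8} is just the quotient rule, so everything reduces to controlling the sign of $\dfrac{P(k,z)}{F(z)^2}\pm\dfrac{R(k,z)}{\overline F(z)^2}$, where $P:=(\psi+z\psi_z-\zeta)F-z\psi F'$ and $R:=(\xi+z\xi_z-\eta)\overline F-z\xi\overline F'$. Since $0<F(z)<\overline F(z)$ for $z>0$, this is equivalent to the two inequalities $P\,\overline F^2+R\,F^2<0$ and $P\,\overline F^2-R\,F^2<0$. The plan is to attack each of these by the scheme of Remark \ref{proof idea}: first peel off the dependence on $k$, then settle the remaining one--variable inequality in $z$, applying \eqref{lemma0} twice and closing with the elementary bounds \eqref{dis2}, \eqref{ineqvarie} and standard estimates for hyperbolic functions.

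\emph{Step 1 (explicit form of $P$ and $R$).} Using \eqref{cost}--\eqref{cost1} I would compute $\psi_z,\xi_z,F',\overline F'$ and simplify $P$ and $R$. Each of $\psi,\xi,\zeta,\eta$ is of the form $a\cosh(kz)+b\sinh(kz)$ with $a,b$ explicit combinations of $\cosh z,\sinh z$ and polynomials in $(z,k)$; the constants $\frac{4}{1-\sigma}$ and $\frac{(1+\sigma)^2}{1-\sigma}$ in $\zeta,\eta$ are placed exactly so that, upon forming $\psi+z\psi_z-\zeta$ and $\xi+z\xi_z-\eta$, the singular $\frac1{1-\sigma}$ terms cancel, leaving polynomials in $z$ times the four products $\cosh(kz)\cosh z$, $\cosh(kz)\sinh z$, $\sinh(kz)\cosh z$, $\sinh(kz)\sinh z$. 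Writing $F=G-\delta$, $\overline F=G+\delta$ with $G:=\frac{3+\sigma}{2}\sinh(2z)$ and $\delta:=z(1-\sigma)$, I would also use that $\psi\pm\xi$ and $\zeta\pm\eta$ collapse into functions of $(1\pm k)z$ (as already exploited in the proof of Lemma \ref{lemma2}) to put $P\,\overline F^2\pm R\,F^2$ in a manageable form.

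\emph{Step 2 (reduction in $k$, then in $z$).} For $z>0$ fixed I would write $P\,\overline F^2\pm R\,F^2=\mathcal W_\pm(k,z)\cosh(kz)+\mathcal Q_\pm(k,z)\sinh(kz)$ with $\mathcal W_\pm,\mathcal Q_\pm$ polynomial in $k$ of degree at most two. Applying \eqref{lemma0} with $\omega=k$, it suffices to prove $\mathcal W_\pm+\mathcal Q_\pm<0$ and $\mathcal W_\pm-\mathcal Q_\pm<0$ on $[-1,1]$; but these are now genuine parabolas in $k$, so, exactly as for \eqref{parabolas}, I would show they are concave (this is where the hypothesis $\sigma\ge0$ enters, cf. Remark \ref{knegative}), check that their vertex lies outside $(-1,1)$, and so reduce to $k=1$ and $k=-1$; by the $k\mapsto-k$ together with $+\leftrightarrow-$ symmetry, only $k=1$ is left. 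At $k=1$ everything depends on $z$ alone, and I would run the same machinery once more: write it as $W(z)\cosh(\omega z)+Q(z)\sinh(\omega z)$, invoke \eqref{lemma0}, reduce to $W(z)\pm Q(z)<0$, and finish with \eqref{dis2}, \eqref{ineqvarie} and bounds such as $\sinh(2z)>2z$, $\cosh z>1$, $\tanh z<1$. The ``$+$'' case simplifies through $F+\overline F=(3+\sigma)\sinh(2z)$ and the ``$-$'' case through $\overline F-F=2z(1-\sigma)$, so the two should be organised separately.

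\emph{Main obstacle.} There is no isolated hard inequality here; the real difficulty is bookkeeping and sign control in a large expression. The two genuinely delicate points I expect are: (i) verifying in Step 2 that the vertex of the $k$--parabola stays outside $[-1,1]$ for \emph{every} $\sigma\in[0,1)$ and $z>0$, which is precisely the step that breaks for $\sigma<0$; and (ii) ensuring in Step 1 that the cancellation of the $\frac1{1-\sigma}$ terms really survives after multiplication by $\overline F^2$ (resp. $F^2$), where an algebraic slip would be fatal. I would therefore carry out Step 1 symbolically, record the simplified $P,R$ as a preliminary lemma, and devote the bulk of the work to the parabola estimates of Step 2.
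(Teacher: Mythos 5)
Your reduction to $P\,\overline F^2\pm R\,F^2<0$ is a valid equivalent reformulation, but it departs from what makes the paper's proof tractable, and your Step~1 contains a false expectation. First, the singular $\tfrac{1}{1-\sigma}$ terms do \emph{not} cancel in $\psi+z\psi_z-\zeta$: the coefficient of $\cosh(kz)\cosh z$ in that combination works out to $-\tfrac{2(1+\sigma)}{1-\sigma}+2z(1-\sigma)+z^2(1-k^2)(1-\sigma)$, and the $\tfrac{1}{1-\sigma}$ persists (compare the paper's $r(z),u(z)$ in \eqref{vari coeff}). So the simplification you were counting on to make $P$ and $R$ ``polynomial in $z$'' is not there; it is just a finite, $\sigma$-dependent constant that has to be carried along.

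Second, and more importantly, multiplying by $F^2\overline F^2$ throws away the rational structure that the paper exploits. The paper keeps the expression in the form $\cosh(kz)\mathcal W(k,z)+\sinh(kz)\mathcal Q(k,z)$ with $\mathcal W,\mathcal Q$ quadratic in $kz$ and with coefficients that are explicit combinations of $\tfrac1F,\tfrac1{\overline F},\tfrac{F'}{F^2},\tfrac{\overline F'}{\overline F^2}$; the key tool is then Lemma~\ref{AF_F1}, which gives the sign of $\tfrac{2F-F'}{F^2}\pm\tfrac{2\overline F-\overline F'}{\overline F^2}$, and this combination appears repeatedly and verbatim in $\mathcal W\pm\mathcal Q$. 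After multiplying through, these combinations are buried inside a high-degree polynomial in $\sinh(2z),\cosh(2z),z$, and you would have to rediscover the estimates from scratch. Third, the paper reduces the outer $\pm$ in \eqref{dis8} to the single ``$+$'' case by a parity argument ($\zeta,\psi$ even in $k$; $\eta,\xi$ odd), which you do not invoke; and its two inner parabolas are not treated symmetrically: for $\chi^+$ the ordinate of the vertex (the discriminant) is shown negative, while only for $\chi^-$ is the vertex abscissa shown to lie outside $[-1,1]$ with the maximum then evaluated at $k=-1$. Your plan ``vertex outside $(-1,1)$, hence reduce to $k=\pm1$'' is not the argument the paper uses for $\chi^+$, and there is no indication it would hold. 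So while your framing via Remark~\ref{proof idea} is in the right spirit, the execution would require restoring the parity reduction, abandoning the multiplication by $F^2\overline F^2$ in favour of the rational form that feeds into Lemma~\ref{AF_F1}, and using a discriminant argument for one of the two parabolas.
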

\begin{proof}
Since
\begin{align*}
\zeta(-k,z)&=\zeta(k,z),\quad &\psi(-k,z)&=\psi(k,z),\quad &\psi_z(-k,z)&=\psi_z(k,z),\quad &\forall k\in[-1,1], \forall z>0,\\
\eta(-k,z)&=-\eta(k,z),\quad &\xi(-k,z)&=-\xi(k,z),\quad &\xi_z(-k,z)&=-\xi_z(k,z)\quad &\forall k\in[-1,1], \forall z>0,
\end{align*}
see Lemma \ref{parita} in the Appendix for the explicit form of the above functions, the second term of \eq{dis8} is given by the sum of an even and an odd function with respect to $k$. Hence, to obtain \eq{dis8} it is enough to prove that \small \begin{equation}\label{ts+}
 \dfrac{(\psi+z\psi_z-\zeta) F(z)-z\psi F'}{F^2}+\dfrac{(\xi+z\xi_z-\eta)\overline F-z\xi \overline F'}{\overline F^2}<0\, \qquad \forall k\in[-1,1] \text{ and } z>0\,.
 \end{equation}\normalsize
    
We rewrite \eq{ts+} as
\begin{equation}\label{ts}
\cosh(kz)\,\mathcal{W}(k,z)+\sinh(kz)\,\mathcal{Q}(k,z)<0 \, \qquad \forall k\in[-1,1] \text{ and } z>0
\end{equation}
where
$$
\mathcal{W}(k,z):=k^2z^2\,s(z)+kz\,t(z)+u(z),\qquad \mathcal{Q}(k,z):=k^2z^2\,p(z)+kz\,q(z)+r(z)
$$
and
\begin{equation}\label{vari coeff}
\begin{split}
p(z):&=-\dfrac{(1-\sigma)}{\overline F(z)}[\cosh(z)+\sinh(z)]<0\\
q(z):&=\dfrac{1}{F(z)}\bigg(2(1+\sigma)\cosh(z)-4\sinh(z)+z(1-\sigma)[\cosh(z)+\sinh(z)]\dfrac{F'(z)}{F(z)}\bigg)\\
r(z):&=\dfrac{1}{\overline F(z)}\bigg\{[\cosh(z)+\sinh(z)]\bigg(-\dfrac{2(1+\sigma)}{1-\sigma}+2z(1-\sigma)+z^2(1-\sigma)\bigg)\\&\hspace{5mm}-z \bigg(\cosh(z)[-1-\sigma+z(1-\sigma)]+\sinh(z)[2+z(1-\sigma)]\bigg)\dfrac{\overline F'(z)}{\overline  F(z)}\bigg\}\\
s(z):&=-\dfrac{(1-\sigma)}{F(z)}[\cosh(z)+\sinh(z)]<0\\
t(z):&=\dfrac{1}{\overline F(z)}\bigg(-4\cosh(z)+2(1+\sigma)\sinh(z)+z(1-\sigma)[\cosh(z)+\sinh(z)]\dfrac{\overline F'(z)}{\overline F(z)}\bigg)\\
u(z):&=\dfrac{1}{ F(z)}\bigg\{[\cosh(z)+\sinh(z)]\bigg(-\dfrac{2(1+\sigma)}{1-\sigma}+2z(1-\sigma)+z^2(1-\sigma)\bigg)\\&\hspace{5mm}-z \bigg(\cosh(z)[2+z(1-\sigma)]+\sinh(z)[-1-\sigma+z(1-\sigma)]\bigg)\dfrac{F'(z)}{F(z)}\bigg\}.\\
\end{split}
\end{equation}
By \eqref{lemma0}, \eqref{ts} follows if $\chi^{\pm }(k,z):=\mathcal{W}(k,z)\pm \mathcal{Q}(k,z)<0$ for all $k\in[-1,1]$ and $z>0$, namely if
\begin{equation}\label{chi+}
\chi^+(k,z):=k^2z^2\,[s(z)+p(z)]+kz\,[t(z)+q(z)]+u(z)+r(z)<0 \qquad \forall k\in[-1,1] \text{ and }z>0
\end{equation}
and
\begin{equation}\label{chi-}
\chi^{-}(k,z):=k^2z^2\,[s(z)-p(z)]+kz\,[t(z)-q(z)]+u(z)-r(z)<0 \qquad \forall k\in[-1,1] \text{ and }z>0\,.
\end{equation}

 We prove the validity of \eq{chi+} and \eq{chi-} here below; this concludes the proof of Lemma \ref{lemma6}.

\par \bigskip\par

\textbf{Proof of \eq{chi+}.}

By  \eq{vari coeff}, $s(z)+p(z)<0$ for all $z>0$, hence $\chi^+(k,z)$ is a concave parabola with respect to $k$. Therefore, $\chi^+(k,z)<0$ if the ordinate of its vertex is negative, namely if
\begin{equation*}\label{maxP1}
	\dfrac{4[s(z)+p(z)][u(z)+r(z)]-[t(z)+q(z)]^2}{4[s(z)+p(z)]}=:\dfrac{\mu(z)}{4[s(z)+p(z)]}<0\quad \forall z>0.
\end{equation*}
Through many computations we obtain
\small 
\begin{equation*}
\begin{split}
\mu(z)=&2(1-\sigma)(3+\sigma)\bigg[\dfrac{1}{[F(z)]^2}-\dfrac{1}{[\overline F(z)]^2}+2\dfrac{z}{F(z)\overline F(z)}\bigg(\dfrac{F'(z)}{F(z)}-\dfrac{\overline F'(z)}{\overline F(z)}\bigg)\bigg]\\&+ \nero\dfrac{(3+\sigma)^2}{[F(z)\overline F(z)]^2}\bigg[\cosh(2z)\bigg((7+10\sigma-\sigma^2)\sinh^2(2z)-4(1-\sigma)^2z^2\bigg)\\&\hspace{20mm}+\sinh(2z)\bigg((7+10\sigma-\sigma^2)\sinh^2(2z)+4(1-\sigma)^2z^2\bigg)\bigg]\\&-[\cosh(2z)+\sinh(2z)]z(1-\sigma)^2\bigg(\dfrac{2F(z)-F'(z)}{[F(z)]^2}+\dfrac{2\overline F(z)-\overline F'(z)}{[\overline F(z)]^2}\bigg)\cdot\\&\hspace{20mm}\cdot\bigg[\dfrac{(4+2z)F(z)-zF'(z)}{[F(z)]^2}+\dfrac{(4+2z)\overline F(z)-z\overline F'(z)}{[\overline F(z)]^2}\bigg].
\end{split}
\end{equation*}
\normalsize
We have
\begin{equation*}
\dfrac{F'(z)}{F(z)}-\dfrac{\overline F'(z)}{\overline F(z)}=\dfrac{(3+\sigma)(1-\sigma)}{F(z)\overline F(z)}[2z\cosh(2z)-\sinh(2z)]>0 \quad \forall z>0
\end{equation*}
since $[2z\cosh(2z)-\sinh(2z)]'(z)=4z\sinh(2z)>0$ for all $z>0$. Hence, recalling \eq{ineqvarie}, the first term in the definition of $\mu$ is positive. Moreover, by estimating $\sinh(2z)>2z$ for $z>0$, we have
\begin{align*}
&(7+10\sigma-\sigma^2)\sinh^2(2z)-4(1-\sigma)^2z^2>8z^2(\sigma+2\sqrt{3}-3)(3+2\sqrt{3}-\sigma)>0\quad\forall z>0\\&(7+10\sigma-\sigma^2)\sinh^2(2z)+4(1-\sigma)^2z^2=(\sigma+4\sqrt{2}-5)(5+4\sqrt{2}-\sigma)\sinh^2(2z)+4(1-\sigma)^2z^2>0\quad\forall z>0.
\end{align*}
\normalsize
By Lemma \ref{AF_F1} in the Appendix we know that $\frac{2F(z)-F'(z)}{[F(z)]^2}+\frac{2\overline F(z)-\overline F'(z)}{[\overline F(z)]^2}<0$, therefore  if 
\begin{equation}\label{ts4}
	\dfrac{(4+2z)F(z)-zF'(z)}{[F(z)]^2}+\dfrac{(4+2z)\overline F(z)-z\overline F'(z)}{[\overline F(z)]^2}>0\qquad \forall z>0,
\end{equation}
then $\mu(z)>0$.
To this aim we consider $$\mu_1(z):=(4+2z)F(z)-zF'(z)=(3+\sigma)\{z[\sinh(2z)-\cosh(2z)]+2\sinh(2z)\}-(1-\sigma)(3z+2z^2);$$
since $\mu_1(0)=0$ and
\begin{align*}
\mu_1'(z)&=(3+\sigma)\{\cosh(2z)[3+2z]+\sinh(2z)[1-2z]\}-(1-\sigma)(3+4z)\\&>2\{(3+\sigma)z[\cosh(2z)-\sinh(2z)]+3+3\sigma+z(1+3\sigma)\}>0\qquad \forall z>0,
\end{align*}
 we have $\mu_1(z)>0$ for all $z>0$. On the other hand, we have
 \small
 $$
 \mu_2(z):=(4+2z)\overline F(z)-z\overline F'(z)=(3+\sigma)\{z[\sinh(2z)-\cosh(2z)]+2\sinh(2z)\}+(1-\sigma)(3z+2z^2)>\mu_1(z)>0\quad \forall z>0,
 $$
 \normalsize
 implying \eq{ts4}. This assures $\chi^+(k,z)<0$ for all $k\in[-1,1]$ and for all $z>0$.\par 

\par \bigskip\par

\textbf{Proof of \eq{chi-}.}

 First of all we notice that $\chi^-(k,z)$ is a concave parabola with respect to $k$, since $$s(z)-p(z)=-(1-\sigma)\bigg[\frac{1}{F(z)}-\dfrac{1}{\overline F(z)}\bigg][\cosh(z)+\sinh(z)]<0\qquad\forall z>0\,.$$
We prove that the parabola has a point of maximum for $k<-1$, i.e. that
\begin{equation}\label{ts3}
\overline \mu(z):=t(z)-q(z)+2z[p(z)-s(z)]< 0\qquad \forall z>0.
\end{equation}
To this aim we study
\begin{equation*}
\begin{split}
\overline \mu(z)=&\dfrac{2}{F(z)\overline F(z)}\bigg[\sinh(z)\big[(1+\sigma) F(z)+2\overline F(z)\big]-\cosh(z)\big[2F(z)+(1+\sigma)\overline F(z)\big]\bigg]\\&+z(1-\sigma)[\cosh(z)+\sinh(z)]\bigg[\dfrac{2F(z)-F'(z)}{[F(z)]^2}-\dfrac{2\overline F(z)-\overline F'(z)}{[\overline F(z)]^2}\bigg].
\end{split}
\end{equation*}
 By Lemma \ref{AF_F1} in the Appendix we have that the last term above is negative; about the remaining terms we distinguish the cases $z\in(0,1]$ and $z>1$.\par
  For $z\in(0,1]$ we have 
 \begin{equation*}
 \begin{split}
 &\sinh(z)\big[(1+\sigma) F(z)+2\overline F(z)\big]-\cosh(z)\big[2F(z)+(1+\sigma)\overline F(z)\big]\\&=\dfrac{(3+\sigma)^2}{2}\sinh(2z)[\sinh(z)-\cosh(z)]+z(1-\sigma)^2[\cosh(z)+\sinh(z)]\\&<2z\big[(\sigma^2+2\sigma+5)\sinh(z)-4(1+\sigma)\cosh(z)\big]:=v(\sigma).
 \end{split}
 \end{equation*}
 We observe that $\dfrac{d v}{d\sigma}=4z\big[(1+\sigma)\sinh(z)-2\cosh(z)\big]<0$, hence $v(\sigma)<2z[5\sinh(z)-4\cosh(z)]<0$ for $z<\log3$, implying $\overline \mu(z)<0$ for $z\in(0,1]$.\par For $z>1$ we rewrite 
 \begin{equation*}
\overline \mu(z)=\cosh(z) \overline\, W(z)+\sinh(z)\,\overline Q(z)\,,
 \end{equation*}
 where
 $$\overline W(z):=-\dfrac{4}{\overline F(z)}-\dfrac{2(1+\sigma)}{F(z)}+z(1-\sigma)\bigg(\dfrac{2F(z)-F'(z)}{[F(z)]^2}-\dfrac{2\overline F(z)-\overline F'(z)}{[\overline F(z)]^2}\bigg)$$
$$\overline Q(z):= \dfrac{2(1+\sigma)}{\overline F(z)}+\dfrac{4}{F(z)}+z(1-\sigma)\bigg(\dfrac{2F(z)-F'(z)}{[F(z)]^2}-\dfrac{2\overline F(z)-\overline F'(z)}{[\overline F(z)]^2}\bigg)$$
 
 and, by \eqref{lemma0}, we prove that $\overline \mu$ is negative by showing that $\overline W(z)\pm \overline Q(z)<0$ for $z>1$. 
 
 The case $\overline W(z)-\overline Q(z)<0$ is trivially true for all $z>0$, then it remains to study $$\overline W(z)+\overline Q(z)=z\dfrac{(1-\sigma)^2}{[F(z)\overline F(z)]^2}\bigg[(3+\sigma)^2\bigg((1+z)\cosh(4z)-2z\sinh(4z)-1-z\bigg)-8z^3(1-\sigma)^2\bigg].$$
 We consider $\overline \mu_1(z):=(1+z)\cosh(4z)-2z\sinh(4z)-1$ and $\overline \mu_1'(z)=(1-8z)\cosh(4z)+2(1+2z)\sinh(4z)$, so that $\overline \mu_1(z)$ has stationary points satisfying $\tanh(4\overline z)=(8\overline z-1)/[2(1+2\overline z)]:=\gamma(\overline z)$. We observe that $\gamma(\overline z)$ is always increasing for $\overline z>0$, $\gamma(\overline z)=1$ if and only if $\overline z=3/4<1$, implying $\overline \mu_1'(z)<0$ for $z>1$; since $\overline \mu_1 (1)=2e^{-4}-1<0$ then $\overline \mu_1(z)<0$ for $z>1$ and in conclusion $\overline W(z)+\overline Q(z)<0$ for all $z>1$. This proves \eq{ts3}.\par In view of \eq{ts3}, to obtain $\chi^-(k,z)<0$ for all $z>0$ and $k\in [-1,1]$ it is enough to study the sign of
\begin{equation*}
\begin{split}
\chi^-(-1,z)&=z^2\,\big[s(z)-p(z)]-z\,[t(z)-q(z)]+u(z)-r(z)\\&=-\dfrac{2(1+\sigma)}{1-\sigma}\bigg(\dfrac{1}{F(z)}-\dfrac{1}{\overline F(z)}\bigg)[\cosh(z)+\sinh(z)]\\&+z\bigg\{\cosh(z)\bigg[2\dfrac{2F(z)-F'(z)}{[F(z)]^2}+(1+\sigma)\frac{2\overline F(z)-\overline F'(z)}{[\overline F(z)]^2}\bigg]\\&-\sinh(z)\bigg[(1+\sigma)\dfrac{2F(z)-F'(z)}{[ F(z)]^2}+2\dfrac{2\overline F(z)-\overline F'(z)}{[\overline F(z)]^2}\bigg)\bigg]
\\&=-\dfrac{2(1+\sigma)}{1-\sigma}\bigg(\dfrac{1}{F(z)}-\dfrac{1}{\overline F(z)}\bigg)[\cosh(z)+\sinh(z)]\\
&+z\bigg\{\cosh(z)\,\widetilde{W}(z)+\sinh(z)\, \widetilde{Q}(z)\bigg\}
\end{split}
\end{equation*}
where
$$\widetilde{W}(z):=2\dfrac{2F(z)-F'(z)}{[F(z)]^2}+(1+\sigma)\frac{2\overline F(z)-\overline F'(z)}{[\overline F(z)]^2}$$
and
$$\widetilde{Q}(z):=-(1+\sigma)\dfrac{2F(z)-F'(z)}{[ F(z)]^2}-2\dfrac{2\overline F(z)-\overline F'(z)}{[\overline F(z)]^2}\,.$$
Recalling that $\overline F(z)>F(z)>0$ for all $z>0$, if
\begin{equation*}\label{tss}
\cosh(z)\,\widetilde{W}(z)+\sinh(z)\, \widetilde{Q}(z)<0\quad  \text{for all } z>0,
\end{equation*}
we conclude that $\overline \chi(-1,z)<0$ and the thesis. By \eqref{lemma0} this follows by showing that $\widetilde{W}(z)\pm \widetilde{Q}(z)<0$ for all $z>0$, namely that
$$
(1-\sigma)\bigg[\dfrac{2F(z)-F'(z)}{[F(z)]^2}-\frac{2\overline F(z)-\overline F'(z)}{[\overline F(z)]^2}\bigg]<0\qquad (3+\sigma)\bigg[\dfrac{2F(z)-F'(z)}{[F(z)]^2}+\frac{2\overline F(z)-\overline F'(z)}{[\overline F(z)]^2}\bigg]<0.
$$
These inequalities hold true for all $z>0$ thanks to Lemma \ref{AF_F1} in the Appendix.
\end{proof}

\par \medskip\par

\subsection{Proof of inequality \eq{maximum}.}\label{varie2}
The proof of \eq{maximum} is given in Lemma \ref{lemma7}. 
\begin{lemma}\label{lemma7}
	Given $F(z)$, $\overline F(z)$ as in \eq{cost1} and $F(z)$, $\overline F(z)$, $\zeta(k,z)$, $\eta(k,z)$ as in \eq{cost}, we have
	\begin{align}
	\bigg[\dfrac{\zeta}{F}\pm\dfrac{\eta}{\overline F}\bigg]_z=\dfrac{\zeta_z F-\zeta F'}{[F]^2}\pm \dfrac{\eta_z \overline F-\eta \overline F'}{[\overline F]^2}<0\qquad &\forall k\in[-1,1],\forall z>0\,.\label{dis0}
	\end{align}
\end{lemma}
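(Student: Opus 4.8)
The plan is to run, once more, the reduction scheme of Remark \ref{proof idea}, exactly as in the proof of Lemma \ref{lemma6}. By Lemma \ref{parita} in the Appendix, the function $\zeta(\cdot,z)$ and its $z$--derivative are even in $k$, while $\eta(\cdot,z)$ and its $z$--derivative are odd; hence the two quantities in \eq{dis0} are interchanged by the map $k\mapsto -k$, and it is enough to prove
\begin{equation*}
\dfrac{\zeta_z F-\zeta F'}{F^2}+\dfrac{\eta_z \overline F-\eta \overline F'}{\overline F^2}<0\qquad\forall k\in[-1,1],\ \forall z>0 .
\end{equation*}

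First I would use the explicit expressions $\zeta=\overline\zeta(kz,z)$, $\eta=\overline\eta(kz,z)$ from \eq{cost}--\eq{cost1} and differentiate in $z$; this only brings down polynomial factors in $z$ and, from the $\cosh(kz)$, $\sinh(kz)$ factors, extra powers of $k$. Collecting, the left--hand side above takes the form $\cosh(kz)\,\mathcal W(k,z)+\sinh(kz)\,\mathcal Q(k,z)$, where $\mathcal W(k,z)$ and $\mathcal Q(k,z)$ are quadratic polynomials in $k$ whose coefficients are built from $F,\overline F,F',\overline F'$ and hyperbolic functions of $z$, in complete analogy with \eq{ts}--\eq{vari coeff}. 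By the elementary implication \eq{lemma0} it then suffices to show $\chi^\pm(k,z):=\mathcal W(k,z)\pm\mathcal Q(k,z)<0$ for all $k\in[-1,1]$, $z>0$.

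The functions $\chi^\pm(\cdot,z)$ are concave parabolas in $k$: their leading ($k^2$) coefficients are of the form $z\big(\tfrac{P_2(z)}{F(z)}\pm\tfrac{R_2(z)}{\overline F(z)}\big)$ with $P_2(z)=-2\cosh z+(1+\sigma)\sinh z$ and $R_2(z)=(1+\sigma)\cosh z-2\sinh z$, and one checks they are negative using $\overline F(z)>F(z)>0$ together with $P_2(z)<0$ (from \eq{dis2}), $P_2-R_2=-(3+\sigma)e^{-z}<0$ and $P_2+R_2=(\sigma-1)e^{z}<0$. As in the proof of \eq{chi+}--\eq{chi-}, I would then either show that the vertex abscissa of each $\chi^\pm(\cdot,z)$ lies outside $[-1,1]$ and evaluate $\chi^\pm$ at the relevant endpoint $k=\pm1$, or bound directly the vertex ordinate $\tfrac{4AC-B^2}{4A}$ (where $Ak^2+Bk+C=\chi^\pm$); in both cases the combinations $\tfrac{2F-F'}{F^2}\pm\tfrac{2\overline F-\overline F'}{\overline F^2}$ controlled by Lemma \ref{AF_F1} appear, together with the inequalities \eq{ineqvarie}, and a split $z\in(0,1]$ versus $z>1$ — handled respectively by $\sinh(2z)>2z$ and by monotonicity/$\tanh$ arguments, as for \eq{ts3} — is likely to be needed.

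The only real difficulty, as everywhere in Section \ref{proof1}, is the algebraic bookkeeping: after the $z$--differentiation the coefficient functions of $\chi^\pm$ are heavier than those in \eq{vari coeff}, and obtaining a definite sign for the vertex ordinate will probably require iterating the ``$W\cosh(\omega z)+Q\sinh(\omega z)$ plus \eq{lemma0}'' device one more level, reducing once again to one--variable hyperbolic inequalities that can be closed with the Appendix lemmas (Lemma \ref{parita}, Lemma \ref{AF_F1}) and the elementary bounds $\cosh>1$, $\sinh(2z)>2z$ and $2\cosh(\omega z)-(1+\sigma)\sinh(\omega z)>0$.
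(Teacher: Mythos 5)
Your proposal follows exactly the paper's own route: the parity reduction via Lemma \ref{parita}, the decomposition $\cosh(kz)\,\mathcal{V}(k,z)+\sinh(kz)\,\mathcal{P}(k,z)$ with $\mathcal{V},\mathcal{P}$ quadratic in $k$, the reduction to $\Xi^{\pm}:=\mathcal{V}\pm\mathcal{P}<0$ via \eqref{lemma0}, the concavity of the parabolas, and the plan to place the vertex outside $[-1,1]$ and test the endpoint $k=\pm1$ with Lemma \ref{AF_F1} as the workhorse. Your observation that the $k^2$-coefficients are $z\big(\tfrac{P_2}{F}\pm\tfrac{R_2}{\overline F}\big)$ with $P_2=-2\cosh z+(1+\sigma)\sinh z$, $R_2=(1+\sigma)\cosh z-2\sinh z$, and that their negativity follows from $P_2<0$, $P_2-R_2=-(3+\sigma)e^{-z}<0$, $P_2+R_2=(\sigma-1)e^z<0$ together with $\overline F>F>0$, is correct and is in fact a slightly more economical argument than the paper's (which runs \eqref{lemma0} and \eqref{ineqvarie} once more to deduce $a\pm d<0$).

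The gap is that the genuinely hard part of the lemma is left entirely to the reader: you do not actually produce the coefficients $b,c,e,f$, do not verify the vertex condition \eqref{vertice1}--\eqref{vertice2}, and do not evaluate $\Xi^{+}(1,z)$ and $\Xi^{-}(-1,z)$. Moreover your forecast of the closing technique does not match what is actually required here: no split $z\in(0,1]$ versus $z>1$ occurs in the paper's proof of this lemma (that split belongs to the proof of \eqref{ts3} inside Lemma \ref{lemma6}). Instead the endpoint values are organized as $\Xi^{\pm}(\pm1,z)=\varsigma(z)/F(z)^2\pm\overline\varsigma(z)/\overline F(z)^2$, with $\varsigma<0$ established by a stationary-point analysis of $\widetilde\varsigma(z)=-(1-\sigma)^2 z-4(1+\sigma)+(3+\sigma)^2 z e^{-4z}$ (so a transcendental one-variable estimate, not a $\sinh(2z)>2z$ or $\tanh$-monotonicity bound), and then $\varsigma+\overline\varsigma=-4\tfrac{(1+\sigma)(3+\sigma)}{1-\sigma}e^{-z}<0$ and $\varsigma-\overline\varsigma=e^z\widetilde\varsigma(z)<0$. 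These computations are the substance of the proof; without carrying them out (or something equivalent) the argument is a correct plan but not yet a proof.
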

\begin{proof}
	Since
	\begin{align*}
	\zeta(-k,z)&=\zeta(k,z),\, &\zeta_z(-k,z)&=\zeta_z(k,z),\quad &\forall k\in[-1,1],\,\, \forall z>0,\\
	\eta(-k,z)&=-\eta(k,z),\, &\eta_z(-k,z)&=-\eta_z(k,z),\quad &\forall k\in[-1,1],\, \, \forall z>0,
	\end{align*}
	see Lemma \ref{parita} in the Appendix for the explicit form of the above functions. The second term of \eq{dis0} is given by the sum of an even and an odd function with respect to $k$; hence, to obtain \eq{dis0} it is enough to prove that
	\begin{equation}\label{ts000}
	\dfrac{\zeta_z(k,z) F(z)-\zeta(k,z) F'(z)}{[F(z)]^2}+ \dfrac{\eta_z(k,z) \overline F(z)-\eta(k,z) \overline F'(z)}{[\overline F(z)]^2}<0,
	\end{equation}\normalsize
	for all $k\in[-1,1]$ and $z>0$. We rewrite \eq{ts000} as
	\begin{equation*}\label{ts++}
	\cosh(kz)\mathcal{V}(k,z) +\sinh(kz) \mathcal{P}(k,z)
	\end{equation*}
	where
	$$
	\mathcal{V}(k,z):=k^2\,a(z)+k\,b(z)+c(z),\qquad \mathcal{P}(k,z):=k^2\,d(z)+k\,e(z)+f(z),
	$$
	and
	\begin{equation}\label{vari coeff2}
	\begin{split}
	a(z):&=-\dfrac{z}{F(z)}[2\cosh(z)-(1+\sigma)\sinh(z)]<0\\
	b(z):&=\dfrac{1}{\overline F(z)}\bigg(\dfrac{2(1+\sigma)}{1-\sigma}[\cosh(z)+\sinh(z)]+z[2\sinh(z)-(1+\sigma)\cosh(z)]\dfrac{\overline F'(z)}{\overline F(z)}\bigg)\\
	c(z):&=\dfrac{1}{ F(z)}\bigg\{\cosh(z)\bigg(\dfrac{2\sigma(1+\sigma)}{1-\sigma}+2z\bigg)+\sinh(z)\bigg(\dfrac{2(3-\sigma)}{1-\sigma}-z(1+\sigma)\bigg)\\&\hspace{5mm}- \bigg[\cosh(z)\bigg(\dfrac{4}{1-\sigma}-z(1+\sigma)\bigg)+\sinh(z)\bigg(\dfrac{(1+\sigma)^2}{1-\sigma}+2z\bigg)\bigg]\dfrac{F'(z)}{  F(z)}\bigg\}\\
	d(z):&=-\dfrac{z}{\overline F(z)}[2\sinh(z)-(1+\sigma)\cosh(z)]\\
	e(z):&=\dfrac{1}{F(z)}\bigg(\dfrac{2(1+\sigma)}{1-\sigma}[\cosh(z)+\sinh(z)]+z[2\cosh(z)-(1+\sigma)\sinh(z)]\dfrac{F'(z)}{F(z)}\bigg)\\
	f(z):&=\dfrac{1}{\overline F(z)}\bigg\{\cosh(z)\bigg(\dfrac{2(3-\sigma)}{1-\sigma}-z(1+\sigma)\bigg)+\sinh(z)\bigg(\dfrac{2\sigma(1+\sigma)}{1-\sigma}+2z\bigg)\\&\hspace{5mm}- \bigg[\cosh(z)\bigg(\dfrac{(1+\sigma)^2}{1-\sigma}+2z\bigg)+\sinh(z)\bigg(\dfrac{4}{1-\sigma}-z(1+\sigma)\bigg)\bigg]\dfrac{\overline F'(z)}{ \overline F(z)}\bigg\}.
	\end{split}
	\end{equation}
	Then, by \eqref{lemma0}, we obtain the thesis if $\Xi(k,z)^{\pm}:=\mathcal{V}(k,z)\pm \mathcal{P}(k,z)<0$ for all $k\in[-1,1]$ and $z>0$, i.e. if
	\begin{equation}\label{xi+}
	\Xi^{+}(k,z):=k^2\,[a(z)+d(z)]+k\,[b(z)+e(z)]+c(z)+f(z)<0 \qquad \forall k\in[-1,1]\,, \forall z>0
	\end{equation}
	and
	\begin{equation}\label{xi-}
	\Xi^{-}(k,z):=k^2\,[a(z)-d(z)]+k\,[b(z)-e(z)]+c(z)-f(z)<0 \qquad \forall k\in[-1,1]\,, \forall z>0\,.
	\end{equation}
	
	We prove the validity of \eq{xi+} and \eq{xi-} here below. This concludes the proof of Lemma \ref{lemma7}.

\par \bigskip\par
\textbf{Proof of \eq{xi+}.}\par 

	We consider
	$$
	a(z)+d(z)=-z\bigg[\cosh(z)\bigg(\dfrac{2}{F(z)}-\dfrac{(1+\sigma)}{\overline F(z)}\bigg)+\sinh(z)\bigg(\dfrac{2}{\overline F(z)}-\dfrac{(1+\sigma)}{F(z)}\bigg)\bigg].
	$$
Since, from \eq{ineqvarie}, we have $$\dfrac{2}{F(z)}-\dfrac{(1+\sigma)}{\overline F(z)}\pm\bigg(\dfrac{2}{\overline F(z)}-\dfrac{(1+\sigma)}{F(z)}\bigg)=[2\mp(1+\sigma)]\bigg(\dfrac{1}{ F(z)}\pm\dfrac{1}{\overline F(z)}\bigg)>0\qquad \forall z>0,$$  by \eqref{lemma0} we infer that $a(z)+d(z)<0$; hence the map $k\mapsto \Xi^+(k,z)$ is a concave parabola for all $z>0$.
Now we prove that the parabola has the abscissa vertex at $k=\overline k$ with $\overline k>1$; this follows by showing that
\begin{equation}\label{vertice1}
b(z)+e(z)+2[a(z)+d(z)]>0\qquad \forall z>0.
\end{equation}
We have
\begin{equation*}
\begin{split}
&b(z)+e(z)+2[a(z)+d(z)]=\dfrac{2(1+\sigma)}{(1-\sigma)}[\cosh(z)+\sinh(z)]\bigg(\dfrac{1}{F(z)}+\dfrac{1}{\overline F(z)}\bigg)\\&+z\bigg\{\cosh(z)\bigg[2\bigg(\dfrac{F'(z)}{F(z)^2}-\dfrac{2}{F(z)}\bigg)-(1+\sigma)\bigg(\dfrac{\overline F'(z)}{\overline F(z)^2}-\dfrac{2}{\overline F(z)}\bigg)\bigg]\\&\hspace{6mm}+\sinh(z)\bigg[2\bigg(\dfrac{\overline F'(z)}{\overline F(z)^2}-\dfrac{2}{\overline F(z)}\bigg)-(1+\sigma)\bigg(\dfrac{F'(z)}{F(z)^2}-\dfrac{2}{F(z)}\bigg)\bigg]\bigg\}.
\end{split}
\end{equation*}
Through \eqref{lemma0}, \eq{vertice1} holds if
$$
[2\mp (1+\sigma)]\bigg(\dfrac{F'(z)-2F(z)}{F(z)^2}\pm\dfrac{\overline F'(z)-2\overline F(z)}{\overline F(z)^2}\bigg)>0;
$$
this condition is guaranteed for all $z>0$ by Lemma \ref{AF_F1} in the Appendix. Hence, the maximum of $\Xi^+(k,z)$ is achieved at $k=1$; we prove that $\Xi^+(1,z)=a(z)+d(z)+b(z)+e(z)+c(z)+f(z)<0$ for all $z>0$. To this aim, we consider
\begin{equation*}
\begin{split}
\Xi^+(1,z)=\dfrac{1}{F(z)^2}\bigg\{&\cosh(z)\bigg[\dfrac{2(1+\sigma)^2}{1-\sigma}F(z)+F'(z)\bigg(z(3+\sigma)-\dfrac{4}{1-\sigma}\bigg)\bigg]\\+&\sinh(z)\bigg[\dfrac{8}{1-\sigma}F(z)-F'(z)\bigg(z(3+\sigma)+\dfrac{(1+\sigma)^2}{1-\sigma}\bigg)\bigg]\bigg
\}\\
+\dfrac{1}{\overline F(z)^2}\bigg\{&\cosh(z)\bigg[\dfrac{8}{1-\sigma}\overline F(z)-\overline F'(z)\bigg(z(3+\sigma)+\dfrac{(1+\sigma)^2}{1-\sigma}\bigg)\bigg]\\+&\sinh(z)\bigg[\dfrac{2(1+\sigma)^2}{1-\sigma}\overline F(z)+\overline F'(z)\bigg(z(3+\sigma)-\dfrac{4}{1-\sigma}\bigg)\bigg]\bigg\}:=\dfrac{\varsigma(z)}{F(z)^2}+\dfrac{\overline \varsigma(z)}{\overline F(z)^2}.
\end{split}
\end{equation*}
 Recalling that $\overline F(z)>F(z)>0$ for all $z>0$, we obtain that $\Xi^+(1,z)<0$ by showing that $\varsigma(z)<0$ and $\varsigma(z)+\overline \varsigma(z)<0$. Through many computations we get
$$
\varsigma(z)=\dfrac{e^z}{2}\bigg[-(1-\sigma)^2z-4(1+\sigma)-4\dfrac{(3+\sigma)(1+\sigma)}{1-\sigma}e^{-2z}+(3+\sigma)^2ze^{-4z}\bigg]\,.
$$
Consider $\widetilde{\varsigma}(z):=-(1-\sigma)^2z-4(1+\sigma)+(3+\sigma)^2ze^{-4z}$, we have that $\widetilde{\varsigma}(0)=-4(1+\sigma)<0$ and $\widetilde{\varsigma}(z)\rightarrow-\infty $ as $z\rightarrow +\infty$; furthermore $\widetilde{\varsigma}'(z)=-(1-\sigma)^2+(3+\sigma)^2e^{-4z}(1-4z)$ so that $\widetilde{\varsigma}'(\overline z)=0$ if and only if $\overline ze^{-4\overline z}=\frac{e^{-4\overline z}}{4}-\frac{(1-\sigma)^2}{4(3+\sigma)^2}$. 
We have $$\widetilde{\varsigma}(\overline z)=-(1-\sigma)^2 \bigg(\overline z+\dfrac{1}{4}\bigg)-4(1+\sigma)+(3+\sigma)^2\frac{e^{-4\overline z}}{4}\,.$$
Since $-(1-\sigma)^2 (\overline z+\frac{1}{4})-4(1+\sigma)+(3+\sigma)^2\frac{e^{-4\overline z}}{4}<0$ for all $z>0$, we infer that $\widetilde{\varsigma}(\overline z)<0$ and, in turn, that
\begin{equation}\label{bound}
\widetilde{\varsigma}(z)=-(1-\sigma)^2z-4(1+\sigma)+(3+\sigma)^2ze^{-4z}<0\qquad \forall z>0\,.
\end{equation}
This yields $\varsigma(z)<0$ for all $z>0$.
On the other hand, we have
$$
\varsigma(z)+\overline \varsigma(z)=-4\frac{(1+\sigma)(3+\sigma)}{1-\sigma}e^{-z}<0 \qquad\forall z>0\,.
$$
By this we conclude that $\Xi^+(1,z)<0$ for all $z>0$ and, in turn, that $\Xi^+(k,z)<0$ for all $k\in[-1,1]$ and for all $z>0$.\par 
\bigskip\par

\textbf{Proof of \eq{xi-}.}\par

 We have
	$$
	a(z)-d(z)=-z\bigg[\cosh(z)\bigg(\dfrac{2}{F(z)}+\dfrac{(1+\sigma)}{\overline F(z)}\bigg)-\sinh(z)\bigg(\dfrac{2}{\overline F(z)}+\dfrac{(1+\sigma)}{F(z)}\bigg)\bigg]\,.
	$$
	Since $$\dfrac{2}{F(z)}+\dfrac{(1+\sigma)}{\overline F(z)}\pm\bigg(\dfrac{2}{\overline F(z)}+\dfrac{(1+\sigma)}{F(z)}\bigg)=[2\pm(1+\sigma)]\bigg(\dfrac{1}{ F(z)}\pm\dfrac{1}{\overline F(z)}\bigg)>0\qquad \forall z>0,$$ by \eqref{lemma0}, we infer that $a(z)-d(z)<0$ and the map $k\mapsto \Xi^-(k,z)$ is a concave parabola for all $z>0$.
	Now we prove that the abscissa $\overline k$ of the parabola vertex satisfies $\overline k<-1$, namely that
	\begin{equation}\label{vertice2}
	b(z)-e(z)+2[d(z)-a(z)]<0\qquad \forall z>0.
	\end{equation}
	We have that
	\begin{equation*}
	\begin{split}
	&b(z)-e(z)+2[d(z)-a(z)]=-\dfrac{2(1+\sigma)}{(1-\sigma)}[\cosh(z)+\sinh(z)]\bigg(\dfrac{1}{F(z)}-\dfrac{1}{\overline F(z)}\bigg)\\&+z\bigg\{\cosh(z)\bigg[2\bigg(\dfrac{2}{F(z)}-\dfrac{F'(z)}{F(z)^2}\bigg)+(1+\sigma)\bigg(\dfrac{2}{\overline F(z)}-\dfrac{\overline F'(z)}{\overline F(z)^2}\bigg)\bigg]\\&\hspace{6mm}+\sinh(z)\bigg[-2\bigg(\dfrac{2}{\overline F(z)}-\dfrac{\overline F'(z)}{\overline F(z)^2}\bigg)-(1+\sigma)\bigg(\dfrac{2}{F(z)}-\dfrac{F'(z)}{F(z)^2}\bigg)\bigg]\bigg\}.
	\end{split}
	\end{equation*}
	Through \eqref{lemma0}, \eq{vertice2} follows if
	$$
	[2\pm (1+\sigma)]\bigg(\dfrac{2F(z)-F'(z)}{F(z)^2}\pm\dfrac{2\overline F(z)-\overline F'(z)}{\overline F(z)^2}\bigg)<0;
	$$
	this condition is guaranteed for all $z>0$ by Lemma \ref{AF_F1} in the Appendix.\par Hence, by \eq{vertice2}, $\Xi^{-}(k,z)$ achieves its maximum at $k=-1$; we prove that $\Xi^{-}(-1,z)=a(z)-d(z)-b(z)+e(z)+c(z)-f(z)<0$ for all $z>0$. To this aim, we consider
	\begin{equation*}
	\begin{split}
	\Xi^{-}(-1,z)=\dfrac{1}{F(z)^2}\bigg\{&\cosh(z)\bigg[\dfrac{2(1+\sigma)^2}{1-\sigma}F(z)+F'(z)\bigg(z(3+\sigma)-\dfrac{4}{1-\sigma}\bigg)\bigg]\\+&\sinh(z)\bigg[\dfrac{8}{1-\sigma}F(z)-F'(z)\bigg(z(3+\sigma)+\dfrac{(1+\sigma)^2}{1-\sigma}\bigg)\bigg]\bigg
	\}\\
	-\dfrac{1}{\overline F(z)^2}\bigg\{&\cosh(z)\bigg[\dfrac{8}{1-\sigma}\overline F(z)-\overline F'(z)\bigg(z(3+\sigma)+\dfrac{(1+\sigma)^2}{1-\sigma}\bigg)\bigg]\\+&\sinh(z)\bigg[\dfrac{2(1+\sigma)^2}{1-\sigma}\overline F(z)+\overline F'(z)\bigg(z(3+\sigma)-\dfrac{4}{1-\sigma}\bigg)\bigg]\bigg\}:=\dfrac{\varsigma(z)}{F(z)^2}-\dfrac{\overline \varsigma(z)}{\overline F(z)^2},
	\end{split}
	\end{equation*}
where $ \varsigma(z)$ and $\overline \varsigma(z)$ are as defined in the proof of \eq{xi+}. We have already proved that $\varsigma(z)<0$ for all $z>0$, by \eq{bound} we also get 
	$$
	\varsigma(z)-\overline \varsigma(z)=e^{z}\,\widetilde{\varsigma}(z)<0 \qquad\forall z>0\,.
	$$
	Hence, through \eq{ineqvarie} we deduce that $\Xi^{-}(-1,z)<0$ for all $z>0$. This assures $\Xi^{-}(k,z)<0$ for all $k\in[-1,1]$ and for all $z>0$ and concludes the proof of \eq{xi-}.

\end{proof}

 \section{Proof of Theorem \ref{greenpositiva}}\label{proof2}
 The proof is achieved by showing the positivity of the Green function $G_p(q)$ for $p$ and $q$ belonging to suitable rectangles or union of rectangles covering $\widetilde \Omega$. By Theorem \ref{monotonia}, we know that
\begin{equation*}
	G_p(q)=\sum_{m=1}^{+\infty} \dfrac{1}{2\pi}\dfrac{\phi_m(y,w)}{m^3}\sin(m\rho)\sin(mx) \qquad \forall q=(x,y)\in\overline \Omega,\,\,\,\forall p=(\rho,w)\in\overline \Omega.
	\end{equation*}
 In this section we will omit the dependence of $\phi_m$ from $y$ and $w$, implying that all relations we write hold true for all $y,w\in[-\ell,\ell]$; for this reason and for brevity, in all the proofs of this section we often write $G(x,\rho)$ instead of $G_p(q)=G(x,y,\rho,w)$. \par
 We start by showing the positivity of $G_p(q)$ for $p$ or $q$ far from the hinged edges of $\Omega$.
 
 \begin{proposition}\label{prop51}
$G_p(q)> 0$ if
\begin{equation*}
(q=(x,y)\in\bigg[\dfrac{\pi}{4},\dfrac{3}{4}\pi\bigg]\times[-\ell,\ell]\, \wedge\, p\in\widetilde\Omega)\quad \text{or} \quad (q\in\widetilde\Omega\, \wedge\, p=(\rho,w)\in\bigg[\dfrac{\pi}{4},\dfrac{3}{4}\bigg]\times[-\ell,\ell])\,.
\end{equation*}
 \end{proposition}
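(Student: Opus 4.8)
The plan is to read off the positivity directly from the Fourier expansion in Theorem~\ref{monotonia}, separating the first term of the series from the remainder. Write
\[
G_p(q)=\frac{1}{2\pi}\,\phi_1(y,w)\sin\rho\,\sin x+\sum_{m=2}^{+\infty}\frac{1}{2\pi}\,\frac{\phi_m(y,w)}{m^3}\sin(m\rho)\,\sin(mx),
\]
recalling that, by Theorem~\ref{monotonia}, $0<\phi_m(y,w)<\phi_1(y,w)$ for every $m\ge2$ and every $y,w\in[-\ell,\ell]$, and that the series converges uniformly on $\overline\Omega$. Since $\rho,x\in(0,\pi)$, the first term is nonnegative; the point is to show that on the prescribed rectangles it dominates the tail.

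First I would treat the case $q=(x,y)\in[\pi/4,3\pi/4]\times[-\ell,\ell]$ and $p=(\rho,w)\in\widetilde\Omega$, so that $\sin x\ge 1/\sqrt2$ while $\sin\rho>0$. For the remainder I would use three ingredients: the elementary bound $|\sin(m\rho)|\le m\sin\rho$ for $\rho\in(0,\pi)$ (an easy induction on $m$ from the addition formula); the trivial bound $|\sin(mx)|\le1$; and $\phi_m<\phi_1$. Together they give
\[
\left|\sum_{m=2}^{+\infty}\frac{1}{2\pi}\,\frac{\phi_m(y,w)}{m^3}\sin(m\rho)\sin(mx)\right|\le\frac{\phi_1(y,w)\sin\rho}{2\pi}\sum_{m=2}^{+\infty}\frac{1}{m^2}=\frac{\phi_1(y,w)\sin\rho}{2\pi}\left(\frac{\pi^2}{6}-1\right).
\]
Hence $G_p(q)\ge\frac{1}{2\pi}\,\phi_1(y,w)\sin\rho\bigl(\sin x-(\pi^2/6-1)\bigr)$, which is strictly positive because $\sin x\ge 1/\sqrt2>\pi^2/6-1$ and $\phi_1(y,w)>0$, $\sin\rho>0$.

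For the symmetric case $q\in\widetilde\Omega$, $p=(\rho,w)\in[\pi/4,3\pi/4]\times[-\ell,\ell]$ I would repeat the estimate with the roles of $x$ and $\rho$ interchanged, now bounding $|\sin(mx)|\le m\sin x$, $|\sin(m\rho)|\le1$, and using $\sin\rho\ge 1/\sqrt2$; alternatively one may simply invoke the symmetry $G_p(q)=G_q(p)$ of the Green function. The delicate point — and the reason the statement is confined to $[\pi/4,3\pi/4]$ — is exactly that $\rho$ (resp.\ $x$) may approach a hinged edge, where $\sin$ vanishes and the crude bound $|\sin(m\rho)|\le1$ is useless; the linear bound $|\sin(m\rho)|\le m\sin\rho$ is what rescues the argument, converting the $m^{-3}$ decay of the coefficients into a summable $m^{-2}$ decay whose total $\pi^2/6-1\approx0.645$ lands just below $\sin(\pi/4)=1/\sqrt2\approx0.707$. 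Everything else is routine, but the argument genuinely uses the full strength of Theorem~\ref{monotonia}, namely both the positivity and the monotonicity of the functions $\phi_m$.
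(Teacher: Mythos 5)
Your proof is correct and follows essentially the same route as the paper: extract the $m=1$ term, bound the tail using the monotonicity $\phi_m<\phi_1$ together with $|\sin(m\rho)|\le m\sin\rho$ and $\sum_{m\ge 2}m^{-2}=\pi^2/6-1$, and compare against $\sin x\ge 1/\sqrt2$. The paper phrases the last step as noting $x_1:=\arcsin(\pi^2/6-1)\approx 0.70<\pi/4$, which is the same numerical observation you make directly, and it handles the symmetric case by swapping the roles of $x$ and $\rho$ exactly as you propose.
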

 \begin{proof}

 Thanks to Theorem \ref{monotonia}-\eqref{decreasing} we know that
 $$
 0<\phi_{m}<\phi_1\qquad\forall m>1.
 $$
Noting that $|\sin(m \rho)|<m\sin(\rho )$ for all $\rho \in(0,\pi)$, see e.g. \cite{Hardy-Fourier}, we obtain
$$
\dfrac{\phi_m}{m^3}|\sin(m\rho)\sin(mx)|\leq \dfrac{\phi_1}{m^2}\sin(\rho)|\sin(mx)|\qquad \forall x,\rho\in(0,\pi),
$$
from which 
$$
\sum_{m=2}^\infty\dfrac{\phi_m}{m^3}|\sin(m\rho)\sin(mx)|\leq \phi_1\sin(\rho)\sum_{m=2}^\infty\dfrac{|\sin(mx)|}{m^2}\qquad \forall x,\rho\in(0,\pi).
$$
Since
\begin{equation*}\label{dis}
	\sum_{m=2}^\infty\dfrac{|\sin(mx)|}{m^2}< \sum_{m=2}^\infty\dfrac{1}{m^2}=\frac{\pi^2}{6}-1\qquad \forall x\in(0,\pi),
\end{equation*}
we infer that
\begin{equation*}
%\label{ppp centrale1}
\sum_{m=2}^\infty\dfrac{\phi_m}{m^3}\sin(m\rho)\sin(mx)>- \phi_1\sin(\rho)\bigg(\frac{\pi^2}{6}-1\bigg)\qquad \forall x,\rho\in(0,\pi)
\end{equation*}
and, in turn,
\begin{equation}\label{ppp centrale}
G(x,\rho)>\dfrac{\phi_1}{2\pi}\sin(\rho)\left[\sin(x)-\bigg(\frac{\pi^2}{6}-1\bigg)\right]\qquad \forall x,\rho\in(0,\pi).
\end{equation}
We denote by 
\begin{equation*}\label{x1}
x_1:=\arcsin\bigg(\frac{\pi^2}{6}-1\bigg)\approx 0.70<\dfrac{\pi}{4};
\end{equation*}
hence, through \eqref{ppp centrale} we have $G_p(q)>0$ for all $q=(x,y)\in(x_1,\pi-x_1)\times[-\ell,\ell]\, \wedge \, p\in\widetilde\Omega$, implying 
$$
G_p(q)>0\qquad\forall q=(x,y)\in\bigg[\dfrac{\pi}{4},\dfrac{3}{4}\pi\bigg]\times[-\ell,\ell]\, \wedge \, p\in\widetilde\Omega.
$$ 
The positivity in the region $q\in\widetilde\Omega\, \wedge\, p=(\rho,w)\in\big[\frac{\pi}{4},\frac{3}{4}\pi\big]\times[-\ell,\ell]$ follows by repeating the above proof with $x$ and $\rho$ reversed.
\end{proof}

Our next aim is to show the positivity issue for both $p$ and $q$ near the same hinged edge, i.e. near $x=0$ and $\rho=0$ or near $x=\pi$ and $\rho=\pi$. The proof is based on a multi step procedure; the first step is given by the following:
 \begin{lemma}\label{lemma1}
  Fix $N\geq 3$ integer, $G_p(q)> 0$ if
  \begin{equation*}
q=(x,y)\in \bigg[\dfrac{\pi}{N+2},\dfrac{\pi}{N+1}\bigg)\times[-\ell,\ell]\, \wedge \, p=(\rho,w)\in \bigg(0,\dfrac{\pi}{N+1}\bigg)\times[-\ell,\ell]\,.
\end{equation*}
 \end{lemma}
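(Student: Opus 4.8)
The plan is to observe that, for $q$ and $p$ in the region under consideration, the first $N+1$ terms of the Fourier expansion of $G_p(q)$ given by Theorem \ref{monotonia} are all positive, so that it suffices to dominate the tail $\sum_{m\geq N+2}$ by the leading term. First I would split
$$G(x,\rho)=\frac{1}{2\pi}\sum_{m=1}^{N+1}\frac{\phi_m}{m^3}\sin(m\rho)\sin(mx)+\frac{1}{2\pi}\sum_{m=N+2}^{+\infty}\frac{\phi_m}{m^3}\sin(m\rho)\sin(mx),$$
which is legitimate since the series converges. Because $x<\frac{\pi}{N+1}$ and $\rho<\frac{\pi}{N+1}$, for every $1\leq m\leq N+1$ we have $0<mx<\pi$ and $0<m\rho<\pi$, hence $\sin(mx)>0$ and $\sin(m\rho)>0$; together with $\phi_m>0$ from Theorem \ref{monotonia}, every term of the first sum is positive, and keeping only the term $m=1$ gives the lower bound $\sum_{m=1}^{N+1}\frac{\phi_m}{m^3}\sin(m\rho)\sin(mx)\geq \phi_1\sin(\rho)\sin(x)$.

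For the tail I would argue exactly as in the proof of Proposition \ref{prop51}: using $0<\phi_m<\phi_1$ (Theorem \ref{monotonia}), $|\sin(m\rho)|\leq m\sin(\rho)$ for $\rho\in(0,\pi)$ (see \cite{Hardy-Fourier}) and $|\sin(mx)|\leq 1$, one gets
$$\left|\sum_{m=N+2}^{+\infty}\frac{\phi_m}{m^3}\sin(m\rho)\sin(mx)\right|\leq \phi_1\sin(\rho)\sum_{m=N+2}^{+\infty}\frac{1}{m^2}<\frac{\phi_1\sin(\rho)}{N+1},$$
the last inequality by telescoping $\frac{1}{m^2}<\frac{1}{m(m-1)}$. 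Combining the two estimates yields
$$G(x,\rho)>\frac{\phi_1\sin(\rho)}{2\pi}\left[\sin(x)-\frac{1}{N+1}\right],$$
so it only remains to check that the bracket is positive when $x\geq\frac{\pi}{N+2}$. Since $N\geq 3$ the $x$-interval is contained in $[0,\pi/2]$, where $\sin$ is increasing, so $\sin(x)\geq\sin\big(\tfrac{\pi}{N+2}\big)$; by the inequality $\sin t\geq\frac{2}{\pi}t$ on $[0,\pi/2]$ this is $\geq\frac{2}{N+2}$, and $\frac{2}{N+2}>\frac{1}{N+1}$ for every $N\geq 1$. Recalling $\sin(\rho)>0$, we conclude $G_p(q)>0$ throughout the region.

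I expect no serious analytic difficulty here once Theorem \ref{monotonia} is available; the only delicate point is the combinatorial calibration, namely placing the cut-off exactly at $m=N+1$ so that precisely the first $N+1$ terms of the series are positive. This is what forces the $x$-interval to take the form $\big[\tfrac{\pi}{N+2},\tfrac{\pi}{N+1}\big)$ and reduces the whole statement to the elementary comparison $\sin\big(\tfrac{\pi}{N+2}\big)>\sum_{m\geq N+2}m^{-2}$. The same scheme, a positive head of the series plus an elementary estimate of the tail, is the building block to be iterated in the subsequent lemmas in order to push positivity all the way up to the hinged edges.
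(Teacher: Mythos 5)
Your proof is correct, but it takes a genuinely different route from the paper's. You place the cutoff one term later, at $m=N+1$, observe that every head term is strictly positive (since $mx,m\rho\in(0,\pi)$ for $m\le N+1$ and $x,\rho\in(0,\tfrac{\pi}{N+1})$), discard all of the head except $m=1$, and compare $\phi_1\sin(\rho)\sin(x)$ directly against a tail dominated by $\phi_1\sin(\rho)\sum_{m\geq N+2}m^{-2}<\phi_1\sin(\rho)/(N+1)$. The paper instead cuts at $m=N$, retains the \emph{entire} head via the non-elementary inequality $\sin(mx)\sin(m\rho)>\sin(x)\sin(\rho)$ (Lemma \ref{lemmasin}), extracts the common factor $\phi_N\sin(\rho)\sin(x)\sum_{m=1}^N m^{-3}$, and arrives at the bound $\sin(x)>C_N$ with $C_N$ as in \eqref{CNdef}. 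Your decomposition is more elementary: it avoids Lemma \ref{lemmasin} entirely and replaces the coarse integral bound $\sum_{m\geq N+1}m^{-2}<1/N$ by the sharper telescoping bound $\sum_{m\geq N+2}m^{-2}<1/(N+1)$, which is exactly what lets the weaker Jordan estimate $\sin t\geq\tfrac{2}{\pi}t$ close the argument where the paper needs the steeper $\sin t\geq\tfrac{3}{\pi}t$ on $(0,\tfrac{\pi}{6}]$. What the paper's version buys is the explicit constant $C_N$ and the factored form with $\phi_N\sum_{m=1}^N m^{-3}$, which are reused verbatim in the proof of Lemma \ref{lemma11} (via $C_{N-1}$ and $\rho_N=x_{N-1}$) to iterate the positivity up to $\rho\in(0,\tfrac{\pi}{4})$; your cleaner argument proves Lemma \ref{lemma1} but does not directly hand the next lemma the objects it manipulates, so a little extra bookkeeping would be needed if you wanted to carry your version through the whole section.
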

 \begin{proof}
 We fix $N\geq 3$ and we rewrite $G_p(q)$ as follows
$$
G(x,\rho)=\dfrac{1}{2\pi}\sum_{m=1}^N  \dfrac{\phi_m}{m^3}\sin(mx)\sin(m\rho)+\dfrac{1}{2\pi}\sum_{m=N+1}^\infty  \dfrac{\phi_m}{m^3}\sin(mx)\sin(m\rho)\,.
$$
Then, we exploit the elementary inequality 
$$
 	\sin(mx)\sin(m\rho)>\sin(x)\sin(\rho)\qquad \forall x,\rho\in\bigg(0,\dfrac{\pi}{N+1}\bigg),\quad \forall m=2,\dots,N
 	$$
(see Lemma \ref{lemmasin} in the Appendix for a proof) and Theorem \ref{monotonia}-\eqref{decreasing} to get
\begin{equation}\label{dise1}
\sum_{m=1}^N  \dfrac{\phi_m}{m^3}\sin(mx)\sin(m\rho)> \phi_N\sin(x)\sin(\rho) \sum_{m=1}^N  \dfrac{1}{m^3}\qquad \forall x,\rho\in\bigg(0,\dfrac{\pi}{N+1}\bigg).
\end{equation}
On the other hand, through $|\sin(m\rho)|<m\sin(\rho)$ for all $\rho\in(0,\pi)$ and Theorem \ref{monotonia}-\eqref{decreasing}, we get 
\begin{equation}\label{dise22}
\bigg|\sum_{m=N+1}^\infty  \dfrac{\phi_m}{m^3}\sin(mx)\sin(m\rho)\bigg|\leq \sum_{m=N+1}^\infty  \dfrac{\phi_m}{m^3}|\sin(mx)\sin(m\rho)|< \phi_{N}\sin(\rho)\sum_{m=N+1}^\infty  \dfrac{1}{m^2}.
\end{equation}
By combining \eqref{dise1} and \eqref{dise22} we infer 
\begin{equation}\label{ineq1}
\begin{split}
G(x,\rho)&>\dfrac{\phi_N}{2\pi}\sin(\rho) \left(\sum_{m=1}^N  \dfrac{1}{m^3}\right)\, \left[\sin(x)-C_N \right]\qquad \forall x,\rho\in \bigg(0,\dfrac{\pi}{N+1}\bigg)\,,
\end{split}
\end{equation}
where 
\begin{equation}\label{CNdef}
C_N:=\dfrac{\sum\limits_{m=N+1}^\infty  \dfrac{1}{m^2}}{\sum\limits_{m=1}^N  \dfrac{1}{m^3}}\,.
\end{equation}
Next we denote by $x_N$ the unique solution to the equation
\begin{equation*}\label{xN}
\sin(x_N)=C_N\, \qquad x_N\in (0,\pi/2) \,;
\end{equation*}
the above definition makes sense for all $N\geq 1$ since the map $N\mapsto C_N$ is positive, strictly decreasing and $0<C_N<1$. We prove that
\begin{equation}\label{ts0}
x_N< \dfrac{\pi}{N+2}\qquad \forall N\geq 3\,.
\end{equation}
When $N=3$  we have $x_3\approx0.25<\frac{\pi}{5}$ and \eqref{ts0} follows. We complete the proof of \eqref{ts0} by showing that
\begin{equation}\label{ts1}
C_N< \sin\bigg(\dfrac{\pi}{N+2}\bigg)\qquad \forall N\geq 4\,.
\end{equation}

To this purpose we write some estimates on the numerical series; it is easy to see that
$$
\sum\limits_{m=1}^N  \dfrac{1}{m^3}> 1\quad\text{and}\quad\sum\limits_{m=N+1}^\infty  \dfrac{1}{m^2}< \int_N^\infty\dfrac{1}{x^2}\,dx=\dfrac{1}{N} \qquad \forall N\geq 2,
$$
implying
\begin{equation}\label{CN}
C_N< \dfrac{1}{N}\qquad \forall N\geq 2.
\end{equation}
To tackle \eq{ts1} we use the estimate:
\begin{equation}\label{stimaseno}	
\sin(x)\geq\dfrac{3}{\pi}x\qquad \forall x \in\bigg(0,\dfrac{\pi}{6}\bigg].
\end{equation}
Combining \eq{CN} and \eq{stimaseno}, \eq{ts1} follows by noticing that
$
\dfrac{1}{N}<\dfrac{3}{N+2}$ for all $N\geq 4$. Finally, in view of \eq{ts0} the statement readily comes from the positivity of the r.h.s. of \eqref{ineq1}.

\end{proof}
Lemma \ref{lemma1} guarantees the positivity of $G(x,\rho)$ when $x\in  [\frac{\pi}{N+2},\frac{\pi}{N+1})$ and $\rho$ is closed to $0$. In the next lemma we prove that the statement still holds for all $\rho\in(0,\pi/4)$.

 \begin{lemma}\label{lemma11}
	Fixed $N\geq 3$, integer, $G_p(q)> 0$ if
	\begin{equation*}
	q=(x,y)\in \bigg[\dfrac{\pi}{N+2},\dfrac{\pi}{N+1}\bigg)\times[-\ell,\ell]\, \wedge \, p=(\rho,w)\in \bigg(0,\dfrac{\pi}{4}\bigg)\times[-\ell,\ell]\,.
	\end{equation*}
\end{lemma}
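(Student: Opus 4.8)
The plan is to bootstrap from Lemma~\ref{lemma1} using the symmetry $G_p(q)=G_q(p)$ of the Green function, which follows at once from \eqref{greeneq} by testing the equation for $G_p$ with $v=G_q$ and the equation for $G_q$ with $v=G_p$ and using that $(\cdot,\cdot)_{H^2_*(\Omega)}$ is symmetric (equivalently, it is the reflection of $G$ in the pairs $(x,y)$ and $(\rho,w)$ already exploited at the end of the proof of Proposition~\ref{prop51}). Consequently, Lemma~\ref{lemma1} remains true after interchanging the roles of $x$ and $\rho$: for every integer $M\ge 3$ one has $G_p(q)>0$ whenever $\rho\in[\frac{\pi}{M+2},\frac{\pi}{M+1})$ and $x\in(0,\frac{\pi}{M+1})$.

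Now fix $N\ge 3$ and $q=(x,y)$ with $x\in[\frac{\pi}{N+2},\frac{\pi}{N+1})$. If $N=3$ then $(0,\frac\pi4)=(0,\frac{\pi}{N+1})$ and the assertion is precisely Lemma~\ref{lemma1}. If $N\ge 4$, I split the admissible range of $\rho$ as $(0,\frac\pi4)=(0,\frac{\pi}{N+1})\cup[\frac{\pi}{N+1},\frac\pi4)$. For $\rho\in(0,\frac{\pi}{N+1})$, Lemma~\ref{lemma1} applies directly and gives $G_p(q)>0$. For $\rho\in[\frac{\pi}{N+1},\frac\pi4)$, I use the partition
\begin{equation*}
\Bigl[\frac{\pi}{N+1},\frac\pi4\Bigr)=\bigcup_{M=4}^{N}\Bigl[\frac{\pi}{M+1},\frac\pi M\Bigr),
\end{equation*}
pick the $M\in\{4,\dots,N\}$ with $\rho\in[\frac{\pi}{M+1},\frac\pi M)$, and apply the reflected form of Lemma~\ref{lemma1} with $M-1$ (which is $\ge 3$) in place of $M$: its $\rho$-strip is exactly $[\frac{\pi}{M+1},\frac{\pi}{M})$, and its $x$-range $(0,\frac{\pi}{M})$ contains $x$ because $x<\frac{\pi}{N+1}\le\frac{\pi}{M+1}<\frac{\pi}{M}$ (here $M\le N$ is used). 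Hence $G_p(q)>0$ in this case too, and the two cases exhaust $\rho\in(0,\pi/4)$.

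There is essentially no analytic difficulty in this step: the argument is a bookkeeping of index ranges combined with the symmetry of $G$. The only points that need attention are that the scale parameter handed to the reflected Lemma~\ref{lemma1} stays $\ge 3$, so that the lemma is applicable, and that the upper bound $\frac{\pi}{N+1}$ on $x$ is small enough to lie inside the $x$-interval $(0,\frac{\pi}{M})$ produced by that lemma — both being automatic consequences of $M\le N$.
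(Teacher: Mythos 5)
Your proof is correct, and it is a genuinely cleaner variant of the paper's argument. The key difference is that you establish and then exploit the abstract symmetry $G_p(q)=G_q(p)$, obtained at once by testing the weak formulation \eqref{greeneq} for $G_p$ with $v=G_q$ and for $G_q$ with $v=G_p$ and using the symmetry of $(\cdot,\cdot)_{H^2_*(\Omega)}$. This turns Lemma~\ref{lemma1} into its ``reflected'' counterpart for free, and the remaining work is pure index bookkeeping (the partition $[\tfrac{\pi}{N+1},\tfrac{\pi}{4})=\bigcup_{M=4}^{N}[\tfrac{\pi}{M+1},\tfrac{\pi}{M})$ together with the chain $x<\tfrac{\pi}{N+1}\le\tfrac{\pi}{M+1}<\tfrac{\pi}{M}$, and the check $M-1\ge 3$), all of which you carried out correctly. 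The paper, by contrast, never invokes this symmetry explicitly: instead of reflecting Lemma~\ref{lemma1}, it re-derives the analogous bound from scratch --- in \eqref{ineq2} it uses $|\sin(mx)|<m\sin(x)$ on the tail (rather than $|\sin(m\rho)|<m\sin(\rho)$, as in Lemma~\ref{lemma1}) to obtain an estimate with the roles of $\sin(x)$ and $\sin(\rho)$ exchanged, and then applies the same $x_N<\tfrac{\pi}{N+2}$ bound to get $\rho_N=x_{N-1}<\tfrac{\pi}{N+1}$. The resulting inclusion \eqref{eq2} is exactly your reflected Lemma~\ref{lemma1} with parameter $N-1$, so the two approaches are combinatorially identical; yours saves the re-derivation at the modest cost of isolating the self-adjointness/symmetry observation, while the paper's stays entirely inside the Fourier-series estimates and avoids mentioning $G_q(p)$ at all.
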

\begin{proof}
	The case $N=3$ is included in the statement of Lemma \ref{lemma1}.\par 
	When $N\geq4$, by Lemma \ref{lemma1} we know that
\begin{equation}\label{eq1}
	G(x,\rho)>0\qquad\forall x\in\bigg[\dfrac{\pi}{N+2},\dfrac{\pi}{N+1}\bigg),\,\,\forall\rho\in\bigg(0,\dfrac{\pi}{N+1}\bigg),
\end{equation}
with $\frac{\pi}{N+1}<\frac{\pi}{4}$. Moreover, with $C_{N}$ as in \eqref{CNdef}, we estimate 
\begin{equation}\label{ineq2}
\begin{split}
	G(x,\rho)&=\dfrac{1}{2\pi}\bigg[\sum_{m=1}^{N-1}  \dfrac{\phi_m}{m^3}\sin(mx)\sin(m\rho)+\sum_{m=N}^\infty  \dfrac{\phi_m}{m^3}\sin(mx)\sin(m\rho)\bigg]\\&>\dfrac{\phi_{N-1}}{2\pi}\sin(x) \left(\sum_{m=1}^{N-1}  \dfrac{1}{m^3}\right)\, \left[\sin(\rho)-C_{N-1} \right]\hspace{15mm}\qquad \,\,\forall x,\rho\in \bigg(0,\dfrac{\pi}{N}\bigg),
\end{split}
\end{equation}
in which we used Lemma \ref{lemmasin} of the Appendix with $N-1$ (instead of $N$), $|\sin(mx)|< m\sin(x)$ for all $x\in(0,\pi)$ and Theorem \ref{monotonia}-\eqref{decreasing}. In the following, for $N\geq 4$, we denote by $\rho_{N}$ the unique solution to the equation $\sin(\rho_{N})=C_{ N-1}$, with $C_{N}$ as in \eqref{CNdef}. Clearly, $\rho_N=x_{N-1}$ and by \eqref{ts0} we know that
\begin{equation}\label{ts00}
\rho_N< \dfrac{\pi}{N+1}\qquad \forall N\geq 4\,.
\end{equation}
Hence, through \eqref{ineq2} we have 
\begin{equation*}\label{eq2N}
G_p(q)>0\qquad\forall q=(x,y)\in\bigg(0,\dfrac{\pi}{N}\bigg)\times[-\ell,\ell]\, \wedge \, p=(\rho,w)\in\bigg(\rho_N,\dfrac{\pi}{N}\bigg)\times[-\ell,\ell] \qquad \forall N\geq 4,
\end{equation*}
implying by \eqref{ts00} that
\begin{equation}\label{eq2}
G_p(q)>0\qquad\forall q=(x,y)\in\bigg(0,\dfrac{\pi}{N}\bigg)\times[-\ell,\ell]\, \wedge \, p=(\rho,w)\in\bigg[\dfrac{\pi}{N+1},\dfrac{\pi}{N}\bigg)\times[-\ell,\ell]\qquad \forall N\geq 4.
\end{equation}
By combining \eqref{eq1}-\eqref{eq2}, since $[\frac{\pi}{N+2},\frac{\pi}{N+1}) \subset (0,\frac{\pi}{N})$,  we get
\begin{equation}\label{eq3}
G(x,\rho)>0\qquad\forall x\in\bigg[\dfrac{\pi}{N+2},\dfrac{\pi}{N+1}\bigg),\,\,\forall\rho\in\bigg(0,\dfrac{\pi}{N}\bigg)\,.
\end{equation}
If $N=4$ the above inequality yields the thesis since $\dfrac{\pi}{N}=\dfrac{\pi}{4}$.\par 
Let now $\overline N\geq 5$ fixed, by \eqref{eq3} we clearly have
\begin{equation*}
\begin{split}
G(x,\rho)>0\qquad\forall x\in\bigg[\dfrac{\pi} {\overline { N}+2},\dfrac{\pi}{ \overline { N}+1}\bigg),\,\,\forall\rho\in\bigg(0,\dfrac{\pi}{ \overline { N}}\bigg);
\end{split}
\end{equation*}
moreover, since $[\frac{\pi}{ \overline { N}+2},\frac{\pi}{ \overline { N}+1}) \subset (0,\frac{\pi}{N})$  for all $N\leq \overline N-1$, through \eqref{eq2} we also have
\begin{equation*}
\begin{split}
G(x,\rho)>0\qquad\forall x\in\bigg[\dfrac{\pi}{ \overline { N}+2},\dfrac{\pi}{ \overline { N}+1}\bigg),\,\,\forall\rho\in\bigg[\dfrac{\pi}{N+1},\dfrac{\pi}{N}\bigg)\qquad \forall N= 4,..., \overline { N}-1
\end{split}
\end{equation*}
and the thesis follows by noticing that $ \bigg(0,\dfrac{\pi}{ \overline { N}}\bigg) \bigcup \left( \displaystyle{\bigcup_{N=4}^{\overline N-1}} \bigg[\dfrac{\pi}{N+1},\dfrac{\pi}{N}\bigg)\right)= \bigg(0,\dfrac{\pi}{4}\bigg)$.
\end{proof}
Finally, from Lemma \ref{lemma11} we derive
 \begin{proposition}\label{prop2}
	$G_p(q)> 0$ if
	\begin{equation*}
	q=(x,y)\in \bigg(0,\dfrac{\pi}{4}\bigg)\times[-\ell,\ell]\, \wedge \, p=(\rho,w)\in \bigg(0,\dfrac{\pi}{4}\bigg)\times[-\ell,\ell]\,
	\end{equation*}
	\begin{center}
		or
	\end{center}
	\begin{equation*}
	q=(x,y)\in \bigg(\dfrac{3}{4}\pi,\pi\bigg)\times[-\ell,\ell]\, \wedge \, p=(\rho,w)\in \bigg(\dfrac{3}{4}\pi,\pi\bigg)\times[-\ell,\ell]\,.
	\end{equation*}
\end{proposition}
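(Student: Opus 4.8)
The plan is to obtain Proposition \ref{prop2} directly from Lemma \ref{lemma11} by a simple covering argument in the $x$-variable, and then to pass from the hinged edge near $x=0$ to the one near $x=\pi$ by means of a reflection symmetry of the Green function.

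First I would record the elementary fact that the half-open intervals $\big[\tfrac{\pi}{N+2},\tfrac{\pi}{N+1}\big)$, as $N$ runs over the integers $\ge 3$, are pairwise disjoint and abutting, and that their union is exactly $\big(0,\tfrac{\pi}{4}\big)$ (the left endpoints $\tfrac{\pi}{N+2}$ decrease to $0$, while the rightmost interval, for $N=3$, is $\big[\tfrac{\pi}{5},\tfrac{\pi}{4}\big)$). Hence, given any $q=(x,y)$ with $x\in\big(0,\tfrac{\pi}{4}\big)$ and $y\in[-\ell,\ell]$, there is a (unique) integer $N\ge 3$ with $x\in\big[\tfrac{\pi}{N+2},\tfrac{\pi}{N+1}\big)$, and Lemma \ref{lemma11} applied with that $N$ yields $G_p(q)>0$ for every $p=(\rho,w)\in\big(0,\tfrac{\pi}{4}\big)\times[-\ell,\ell]$. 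Since $q$ is arbitrary in $\big(0,\tfrac{\pi}{4}\big)\times[-\ell,\ell]$, this proves the first assertion of the Proposition.

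For the second region I would exploit the invariance of $G$ under the reflection $x\mapsto \pi-x$, $\rho\mapsto\pi-\rho$. Indeed, from the Fourier expansion \eqref{green} together with $\sin\big(m(\pi-t)\big)=(-1)^{m+1}\sin(mt)$ we get $\sin\big(m(\pi-x)\big)\sin\big(m(\pi-\rho)\big)=\sin(mx)\sin(m\rho)$ for every $m\in\mathbb{N}^+$, while the coefficients $\phi_m(y,w)$ are unaffected; therefore $G(\pi-x,y,\pi-\rho,w)=G(x,y,\rho,w)$ for all admissible arguments. Consequently, if $x,\rho\in\big(\tfrac{3}{4}\pi,\pi\big)$ then $\pi-x,\pi-\rho\in\big(0,\tfrac{\pi}{4}\big)$, and the positivity just established (applied to $(\pi-x,y)$ and $(\pi-\rho,w)$) gives $G_p(q)>0$ in the second region as well.

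There is essentially no hard computation in this step: the only points requiring (routine) care are verifying that the family $\big\{\big[\tfrac{\pi}{N+2},\tfrac{\pi}{N+1}\big)\big\}_{N\ge 3}$ exhausts $\big(0,\tfrac{\pi}{4}\big)$ and that the map $x\mapsto\pi-x$ leaves the Green function invariant. All the analytic substance is already contained in the multi-step machinery of Proposition \ref{prop51}, Lemma \ref{lemma1} and Lemma \ref{lemma11}, so the proof here merely assembles those results.
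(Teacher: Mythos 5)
Your proof is correct and is essentially the paper's own argument: both deduce positivity on $(0,\pi/4)\times[-\ell,\ell]$ by exhausting it with the intervals $[\pi/(N+2),\pi/(N+1))$ from Lemma \ref{lemma11} (the paper phrases this as ``passing to the limit $N\to+\infty$,'' you phrase it as a covering, which is the same thing), and both transfer the result to $(\tfrac{3}{4}\pi,\pi)$ via the identity $\sin(m(\pi-t))=(-1)^{m+1}\sin(mt)$ applied to each factor of the Fourier series.
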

 \begin{proof}
 	By Lemma \ref{lemma11} we infer
 	\begin{equation*}\label{eq8}
 	G_p(q)>0\qquad\forall q=(x,y)\in\bigg[\dfrac{\pi}{N+2},\dfrac{\pi}{4}\bigg)\times[-\ell,\ell]\, \wedge \, p=(\rho,w)\in\bigg(0,\dfrac{\pi}{4}\bigg)\times[-\ell,\ell]\quad \forall N\geq 3.
 	\end{equation*}
 	Passing to the limit as  $N\rightarrow+\infty$ we obtain 
 	\begin{equation}\label{eq9}
 	G_p(q)>0\qquad\forall q=(x,y)\in\bigg(0,\dfrac{\pi}{4}\bigg)\times[-\ell,\ell]\, \wedge \, p=(\rho,w)\in\bigg(0,\dfrac{\pi}{4}\bigg)\times[-\ell,\ell].
 	\end{equation}
 	 	
 	To complete the proof we exploit some trigonometric relations. In particular, when $\tau \in \big(\frac{3}{4}\pi,\pi\big)$ we set $\overline \tau=\pi-\tau$ with $\overline \tau\in\big(0,\frac{\pi}{4}\big)$, obtaining
 	\begin{equation}\label{goniom}
 	\sin(m \tau)=
 	\begin{cases}
 	\sin(m\overline \tau)\quad &\text{for }m \text{ odd} \\
 	-\sin(m\overline \tau)\quad &\text{for }m \text{ even}. 
 	\end{cases}
 	\end{equation}
 	Then, for $x\in\big(\frac{3}{4}\pi,\pi\big)$ and $\rho\in\big(\frac{3}{4}\pi,\pi\big)$ we set $\overline x=\pi-x$, $\overline \rho= \pi-\rho$ and we reduce in studying the positivity of
 	\begin{equation*}
 	\begin{split}
 	G(\overline x,\overline \rho)&= \dfrac{1}{2\pi}\sum_{m=1}^\infty\dfrac{\phi_m}{m^3}\sin(m\overline x)\sin(m\overline \rho)\qquad\forall \overline x,\overline \rho\in\bigg(0,\dfrac{\pi}{4}\bigg),
 	\end{split}
 	\end{equation*}
 	which is already guaranteed by \eqref{eq9}.

 \end{proof}
  
It remains to study the sign of the Green function for $p$ and $q$ near to opposite hinged edges, i.e. near $x=0$ and $\rho=\pi$ or near $x=\pi$ and $\rho=0$; as for Proposition \ref{prop2} the proof is based on a multi step procedure.
At first we prove the following:
\begin{lemma}\label{lemma3}
	Fixed $N\geq 3$, odd integer, $G_p(q)> 0$ if
	\begin{equation*}
	q=(x,y)\in \bigg[\dfrac{\pi}{N+3},\dfrac{\pi}{N+1}\bigg)\times[-\ell,\ell]\, \wedge \, p=(\rho,w)\in \bigg(\pi-\dfrac{\pi}{N+1},\pi\bigg)\times[-\ell,\ell]\,.
	\end{equation*}
\end{lemma}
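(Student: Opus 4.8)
The plan is to follow the scheme of Lemma~\ref{lemma1}, after reflecting the variable $\rho$ onto the opposite edge; the new feature is that the resulting Fourier series becomes \emph{alternating}. For $p=(\rho,w)$ and $q=(x,y)$ as in the statement, put $\overline\rho:=\pi-\rho\in\big(0,\frac{\pi}{N+1}\big)$. By \eqref{goniom} one has $\sin(m\rho)=(-1)^{m+1}\sin(m\overline\rho)$, so, omitting as usual the dependence on $y,w$,
\begin{equation*}
2\pi\,G(x,\rho)=\sum_{m=1}^{+\infty}(-1)^{m+1}\,\frac{\phi_m}{m^3}\,A_m,\qquad A_m:=\sin(mx)\sin(m\overline\rho).
\end{equation*}
Since $N$ is odd, $(N+1)x<\pi$ and $(N+1)\overline\rho<\pi$, hence $A_m>0$ for every $m=1,\dots,N+1$.

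The crucial point is a monotonicity of the amplitudes: $m\mapsto m^{-3}A_m$ is strictly decreasing on $\{1,\dots,N+1\}$. Writing $m^{-3}A_m=\frac{\sin(mx)}{m}\cdot\frac{\sin(m\overline\rho)}{m}\cdot\frac1m$, it suffices that, for fixed $\theta\in(0,\pi)$, the map $t\mapsto\frac{\sin(t\theta)}{t}$ be positive and strictly decreasing on $(0,\pi/\theta)$ (so that all three factors are positive and strictly decreasing, a product of such being strictly decreasing); this is elementary, since the derivative of $\frac{\sin(t\theta)}{t}$ has the sign of $g(t\theta)$ with $g(s):=s\cos s-\sin s$, $g(0)=0$, $g'(s)=-s\sin s<0$ on $(0,\pi)$, and we apply it with $\theta\in\{x,\overline\rho\}$, for which $\pi/\theta>N+1$. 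Together with the strict monotonicity of the $\phi_m$ (Theorem~\ref{monotonia}) this gives, for $1\le j\le\frac{N+1}{2}$,
\begin{equation*}
\frac{\phi_{2j-1}}{(2j-1)^3}A_{2j-1}>\frac{\phi_{2j}}{(2j)^3}A_{2j}.
\end{equation*}

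Now I would split $2\pi G(x,\rho)=S_1+S_2$ with $S_1:=\sum_{m=1}^{N+1}(-1)^{m+1}\frac{\phi_m}{m^3}A_m$ and $S_2$ the remainder. Grouping the $N+1$ terms of $S_1$ into the consecutive pairs $(1,2),(3,4),\dots,(N,N+1)$, the displayed inequality makes every pair strictly positive, so $S_1$ exceeds its first pair, and using $A_2=4A_1\cos x\cos\overline\rho<4A_1$ and $\phi_2<\phi_1$,
\begin{equation*}
S_1>\phi_1A_1-\frac{\phi_2}{8}A_2>\frac{\phi_1}{2}\,\sin x\,\sin\overline\rho.
\end{equation*}
For the tail, $|\sin(mx)|\le1$, $|\sin(m\overline\rho)|\le m\sin\overline\rho$, the monotonicity of the $\phi_m$ and $\sum_{m\ge N+2}m^{-2}<\frac1{N+1}$ yield $|S_2|<\frac{\phi_1}{N+1}\sin\overline\rho$. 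Hence
\begin{equation*}
2\pi\,G(x,\rho)\ge S_1-|S_2|>\phi_1\,\sin\overline\rho\Big(\frac{\sin x}{2}-\frac1{N+1}\Big)\ge0,
\end{equation*}
and following the chain of strict inequalities one gets in fact $G(x,\rho)>0$ as soon as $\sin x\ge\frac{2}{N+1}$. Since $x\ge\frac{\pi}{N+3}$ and $\sin$ is increasing on $\big[\frac{\pi}{N+3},\frac{\pi}{N+1}\big)$, the proof closes once one verifies the elementary inequality $\sin\frac{\pi}{N+3}\ge\frac{2}{N+1}$ for all odd $N\ge3$ — an equality at $N=3$, and for $N\ge5$ a routine consequence of $\sin\theta>\theta-\theta^3/6$.

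I expect the main (if modest) obstacle to be the amplitude monotonicity, and in particular the fact that it can only be asserted on $\{1,\dots,N+1\}$, exactly the range on which the $A_m$ have a sign; this is what dictates the cutoff at $m=N+1$ (an odd cutoff $N$ would leave an unpaired positive term $\frac{\phi_N}{N^3}A_N$, too small to absorb $S_2$). The other delicate point is purely quantitative: the estimates on $S_1$ and $S_2$ above are very crude, so the argument closes for every odd $N\ge3$ only because the $x$-interval begins at $\frac{\pi}{N+3}$; with a starting point $\frac{\pi}{N+2}$ (as in Lemma~\ref{lemma1}) the final numerical inequality would fail for small $N$.
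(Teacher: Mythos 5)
Your proof is correct, and it follows the same global architecture as the paper's argument (reflection $\overline\rho=\pi-\rho$, the alternating expansion from \eqref{goniom}, pairing $(2j-1,2j)$ up to the even cutoff $N+1$, then a crude tail estimate) while replacing the paper's key estimate with a genuinely more elementary one. To show each pair contributes positively, the paper invokes Lemma~\ref{lemmasin2}, whose proof is a rather delicate two--page analysis of stationary points via a change of variables; you replace all of that by the one--line observation that $t\mapsto\sin(t\theta)/t$ is positive and strictly decreasing on $(0,\pi/\theta)$, so that $m^{-3}A_m$ is a product of three positive strictly decreasing factors on $\{1,\dots,N+1\}$. What you lose by doing so is only the quantitative lower bound on pairs $m\geq3$ that Lemma~\ref{lemmasin2} provides, but the paper discards that bonus anyway when it estimates $\tfrac12+\sum(\cdots)^2>\tfrac12$ in the definition of $\overline C_N$; your estimate $S_1>\tfrac{\phi_1}{2}\sin x\sin\overline\rho$ obtained from the first pair alone lands at essentially the same place. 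Both proofs then reduce to $\sin\frac{\pi}{N+3}\geq\frac{2}{N+1}$; the paper verifies it via $\sin\theta\geq\frac{3}{\pi}\theta$, you via the cubic Taylor bound plus the exact equality at $N=3$. Two small caveats worth recording: (i) since you use $\phi_1$ rather than $\phi_{N+1}$ on both sides, make sure the $\phi$-factor really cancels in the final inequality (it does); (ii) Lemma~\ref{lemmasin2} is re-used quantitatively in the paper's proof of Lemma~\ref{lemma4}, so your simplification removes it from this lemma but not from the overall proof of Theorem~\ref{greenpositiva} unless you adapt Lemma~\ref{lemma4} as well.
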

\begin{proof}
We set $\overline \rho=\pi-\rho$ with $\overline \rho\in\big(0,\frac{\pi}{N+1}\big)$ and we exploit \eqref{goniom}.
For $N\geq3$, odd integer, we rewrite the Green function as
\begin{equation*}
\begin{split}
G(x,\overline \rho)&=\dfrac{1}{2\pi}\sum_{m=1}^\infty(-1)^{m+1}\dfrac{\phi_m}{m^3}\sin(mx)\sin(m\overline \rho)\\&=\dfrac{1}{2\pi}\sum_{\substack{
		m=1\\
		\text{odd}}}^N\bigg[ \dfrac{\phi_m}{m^3}\sin(mx)\sin(m\overline \rho)-\dfrac{\phi_{m+1}}{(m+1)^3}\sin[(m+1)x]\sin[(m+1)\overline \rho]\bigg]\\&+\dfrac{1}{2\pi}\sum_{m=N+2}^\infty (-1)^{m+1}\dfrac{\phi_m}{m^3}\sin(mx)\sin(m\overline\rho)\qquad\forall x,\overline{\rho}\in\bigg(0,\frac{\pi}{N+1}\bigg).
\end{split}
\end{equation*}
By Theorem \ref{monotonia} we know that $\phi_m>0$ and is strictly decreasing with respect to $m\in\mathbb{N}^+$ for all $y,w\in[-\ell,\ell]$; hence we have
\begin{equation}\label{phi1}
\begin{split}
\phi_1 \sin(x)\sin(\overline\rho)&-\dfrac{\phi_2}{2^3}\sin(2x)\sin(2\overline\rho)=\sin(x)\sin(\overline\rho)\bigg[\phi_1 -\dfrac{\phi_2}{2}\cos(x)\cos(\overline\rho)\bigg]\\&>\dfrac{\phi_2}{2}\sin(x)\sin(\overline\rho)>\dfrac{ \phi_{N+1}}{2}\sin(x)\sin(\overline\rho)\qquad \forall x,\overline\rho \in\bigg(0,\frac{\pi}{2}\bigg),\quad \forall N\geq 3.
\end{split}
\end{equation}
Exploiting the inequality 
\begin{equation*}\label{s}
\begin{split}
&\dfrac{\sin(mx)\sin(m\overline\rho)}{m^3}-\dfrac{\sin[(m+1)x]\sin[(m+1)\overline\rho]}{(m+1)^{3}}>\sin(x)\sin(\overline\rho)\bigg[\dfrac{1}{m^{\frac{3}{2}}}-\dfrac{1}{(m+1)^{\frac{3}{2}}}\bigg]^2\\&\hspace{70mm} \forall x,\overline\rho\in\bigg(0,\dfrac{\pi}{N+1}\bigg), \,\,\forall m=3,\dots,N,\quad \forall N\geq 3,\text{odd},
\end{split}
\end{equation*}
(see Lemma \ref{lemmasin2} in the Appendix for a proof) and \eq{phi1}, we get 
\begin{equation}\label{dise4}
\begin{split}
\sum_{\substack{
		m=1\\
		\text{odd}}}^N&\bigg[ \dfrac{\phi_m}{m^3}\sin(mx)\sin(m\overline\rho)-\dfrac{\phi_{m+1}}{(m+1)^3}\sin[(m+1)x]\sin[(m+1)\overline\rho]\bigg]\\&> \phi_{N+1}\sin(x)\sin(\overline\rho) \bigg[\dfrac{1}{2}+\sum_{\substack{
		m=3\\
		\text{odd}}}^N \bigg(\dfrac{1}{m^\frac{3}{2}}-\dfrac{1}{(m+1)^\frac{3}{2}}\bigg)^2\bigg]\quad \forall x,\overline\rho\in\bigg(0,\dfrac{\pi}{N+1}\bigg)\quad \forall N\geq 3,\text{odd}\,.
\end{split}
\end{equation}
On the other hand, since $|\sin(m\overline \rho)|<m\sin(\overline \rho)$ for all $\overline \rho\in(0,\pi)$ and through the monotonicity of the $\phi_m$, we get
\begin{equation}\label{dise5}
\bigg|\sum_{m=N+2}^\infty  (-1)^{m+1}\dfrac{\phi_m}{m^3}\sin(mx)\sin(m\overline\rho)\bigg|< \phi_{N+1}\sin(\overline\rho)\sum_{m=N+2}^\infty  \dfrac{1}{m^2}\quad \forall \overline \rho\in(0,\pi),\,\,\forall N\geq 3.
\end{equation}
From \eqref{dise4}-\eqref{dise5}, for all $N\geq 3$ odd, we infer 
\begin{equation*}\label{aim2}
\begin{split}
G(x,\overline \rho)>
\dfrac{\phi_{N+1}}{2\pi}\sin(\overline\rho)\,\bigg[\dfrac{1}{2}+\sum_{\substack{
		m=3\\
		\text{odd}}}^N &\bigg(\dfrac{1}{m^\frac{3}{2}}-\dfrac{1}{(m+1)^\frac{3}{2}}\bigg)^2\bigg]\,( \sin(x)-\overline C_N) \quad \forall x,\overline\rho\in\bigg(0,\dfrac{\pi}{N+1}\bigg)\,,
\end{split}
\end{equation*}
where
\begin{equation}\label{CCNdef}
\overline C_N:=\dfrac{\sum\limits_{m=N+2}^\infty  \dfrac{1}{m^2}}{\dfrac{1}{2}+\sum\limits_{\substack{
			m=3\\
			\text{odd}}}^N \bigg[\dfrac{1}{m^\frac{3}{2}}-\dfrac{1}{(m+1)^\frac{3}{2}}\bigg]^2} \,.
\end{equation}
Next we denote by $\overline x_N$ the unique solution to the equation
\begin{equation*}\label{sin_ineq2}
\sin(\overline x_N)=\overline C_N\qquad \overline x_N\in(0,\pi/2).
\end{equation*}
The above definition makes sense for all $N\geq 3$, odd, since the map $N\mapsto \overline C_N$ is positive, strictly decreasing and $0<\overline C_N<1$. We prove that
	\begin{equation}\label{ts2}
\overline C_N< \sin\bigg(\dfrac{\pi}{N+3}\bigg)\qquad \forall N\geq 3,\, \text{odd}.
\end{equation}

To this aim we note that
$$
\dfrac{1}{2}+\sum\limits_{\substack{
		m=3\\
		\text{odd}}}^N \bigg[\dfrac{1}{m^\frac{3}{2}}-\dfrac{1}{(m+1)^\frac{3}{2}}\bigg]^2> \dfrac{1}{2}\quad\text{and}\quad\sum\limits_{m=N+2}^\infty  \dfrac{1}{m^2}< \int_{N+1}^\infty\dfrac{1}{x^2}\,dx=\dfrac{1}{N+1},
$$
implying
\begin{equation*}\label{CCN}
\overline C_N< \dfrac{2}{N+1}\qquad \forall N\geq 3.
\end{equation*}
Recalling \eq{stimaseno}, \eq{ts2} follows by noticing that $\dfrac{2}{N+1}\leq \dfrac{3}{N+3}\leq  \sin\bigg(\dfrac{\pi}{N+3}\bigg)$ for all $N\geq 3$.

\end{proof}

In the next Lemma we extend the validity of Lemma \ref{lemma3} to all $\rho\in(3\pi/4,\pi)$.  
\begin{lemma}\label{lemma4}
	Fixed $N\geq 3$, odd integer, $G_p(q)> 0$ if
	\begin{equation*}
	q=(x,y)\in \bigg[\dfrac{\pi}{N+3},\dfrac{\pi}{N+1}\bigg)\times[-\ell,\ell]\, \wedge \, p=(\rho,w)\in \bigg(\dfrac{3}{4}\pi,\pi\bigg)\times[-\ell,\ell]\,.
	\end{equation*}
\end{lemma}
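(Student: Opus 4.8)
The plan is to reproduce, for the ``opposite edges'' configuration, the iterative enlargement already carried out in Lemma \ref{lemma11}. Lemma \ref{lemma3} gives the positivity of $G_p(q)$ when $x\in\bigl[\tfrac{\pi}{N+3},\tfrac{\pi}{N+1}\bigr)$ and $\rho$ lies in the thin strip $\rho\in\bigl(\pi-\tfrac{\pi}{N+1},\pi\bigr)$; the goal is to widen this strip up to $\rho\in\bigl(\tfrac34\pi,\pi\bigr)$. Throughout I set $\overline\rho:=\pi-\rho\in\bigl(0,\tfrac\pi4\bigr)$ and use \eqref{goniom} to write $G(x,\overline\rho)=\tfrac1{2\pi}\sum_{m\ge1}(-1)^{m+1}\tfrac{\phi_m}{m^3}\sin(mx)\sin(m\overline\rho)$. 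For $N=3$ there is nothing to do, because $\pi-\tfrac{\pi}{N+1}=\tfrac34\pi$ and the statement coincides with Lemma \ref{lemma3}; so I may assume $N\ge5$ odd.

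The one genuinely new ingredient is a companion lower bound, analogous to \eqref{ineq2} in the proof of Lemma \ref{lemma11}. For odd $N\ge5$ one reruns the pairing scheme of Lemma \ref{lemma3} with two changes: the Fourier series is split at the \emph{odd} index $N-2$ instead of $N$ (so that Lemma \ref{lemmasin2} of the Appendix and estimate \eqref{ts2}, both confined to odd indices, stay applicable), and in the tail one factors out $\sin(x)$ via $|\sin(mx)|<m\sin(x)$ rather than $\sin(\overline\rho)$. Coupling the $m$-th (odd) and $(m+1)$-th terms, and using Theorem \ref{monotonia}-\eqref{decreasing}, the estimate \eqref{phi1} with $\phi_{N-1}$ in place of $\phi_{N+1}$, and Lemma \ref{lemmasin2} applied with $N-2$ (which requires $x,\overline\rho\in\bigl(0,\tfrac{\pi}{N-1}\bigr)$), one is led to
\[
G(x,\overline\rho)>\dfrac{\phi_{N-1}}{2\pi}\,\sin(x)\Bigl[\dfrac12+\sum_{\substack{m=3\\ \text{odd}}}^{N-2}\bigl(m^{-3/2}-(m+1)^{-3/2}\bigr)^2\Bigr]\bigl(\sin(\overline\rho)-\overline C_{N-2}\bigr)\qquad\forall\, x,\overline\rho\in\Bigl(0,\dfrac{\pi}{N-1}\Bigr),
\]
with $\overline C_{N-2}$ exactly as in \eqref{CCNdef}. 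Since \eqref{ts2}, read with $N-2$ in place of $N$, gives $\overline x_{N-2}<\tfrac{\pi}{(N-2)+3}=\tfrac{\pi}{N+1}$ (where $\sin\overline x_{N-2}=\overline C_{N-2}$), the right-hand side is positive for all $\overline\rho>\overline x_{N-2}$; in particular
\[
G_p(q)>0\qquad\text{whenever}\quad x\in\Bigl(0,\dfrac{\pi}{N-1}\Bigr)\ \wedge\ \overline\rho\in\Bigl[\dfrac{\pi}{N+1},\dfrac{\pi}{N-1}\Bigr),\qquad\text{for every odd }N\ge5.
\]

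To finish I would merge and iterate. Combining Lemma \ref{lemma3} (which covers $\overline\rho\in\bigl(0,\tfrac{\pi}{N+1}\bigr)$) with the companion bound (which covers $\overline\rho\in\bigl[\tfrac{\pi}{N+1},\tfrac{\pi}{N-1}\bigr)$ on the wider window $\bigl(0,\tfrac{\pi}{N-1}\bigr)\supset\bigl[\tfrac{\pi}{N+3},\tfrac{\pi}{N+1}\bigr)$) yields, for every odd $N\ge5$,
\[
G_p(q)>0\qquad\forall\, x\in\Bigl[\dfrac{\pi}{N+3},\dfrac{\pi}{N+1}\Bigr),\ \ \forall\,\overline\rho\in\Bigl(0,\dfrac{\pi}{N-1}\Bigr);
\]
for $N=5$ this is already the claim, since $\tfrac{\pi}{N-1}=\tfrac\pi4$. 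For odd $\overline N\ge7$ one applies the last display with $N=\overline N$ to cover $\overline\rho\in\bigl(0,\tfrac{\pi}{\overline N-1}\bigr)$, and the companion bound with $N=5,7,\dots,\overline N-2$ to cover the remaining $\overline\rho\in\bigl[\tfrac{\pi}{N+1},\tfrac{\pi}{N-1}\bigr)$ on the fixed window $\bigl[\tfrac{\pi}{\overline N+3},\tfrac{\pi}{\overline N+1}\bigr)\subset\bigl(0,\tfrac{\pi}{N-1}\bigr)$; since the half-open intervals telescope,
\[
\Bigl(0,\dfrac{\pi}{\overline N-1}\Bigr)\cup\bigcup_{\substack{N=5\\ \text{odd}}}^{\overline N-2}\Bigl[\dfrac{\pi}{N+1},\dfrac{\pi}{N-1}\Bigr)=\Bigl(0,\dfrac\pi4\Bigr),
\]
the conclusion follows upon recalling $\overline\rho=\pi-\rho$.

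The point that will require care is the parity bookkeeping. Because Lemma \ref{lemma3}, Lemma \ref{lemmasin2} and \eqref{ts2} only speak about odd indices, one cannot cut the series at $N-1$ (as the analogous step of Lemma \ref{lemma11} does), but must cut at $N-2$; this is exactly what forces the enlarged $\overline\rho$-window to be $\bigl(0,\tfrac{\pi}{N-1}\bigr)$ and the relevant threshold to be $\overline x_{N-2}$, and one has to check that the shifted form of \eqref{ts2} still yields $\overline x_{N-2}<\tfrac{\pi}{N+1}$ (it does, with room to spare, by the very proof of \eqref{ts2}) and that the telescoping union over odd $N$ lands on $\bigl(0,\tfrac\pi4\bigr)$ without gaps. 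The re-derivation of the companion bound itself is a routine transcription of the estimates in Lemma \ref{lemma3}, with $\phi_{N+1}$ replaced by $\phi_{N-1}$ and the roles of $\sin(x)$ and $\sin(\overline\rho)$ interchanged in the final factorization.
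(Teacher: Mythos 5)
Your proposal is correct and reproduces the paper's own argument essentially line for line: the same split at the odd index $N-2$, the same companion lower bound with $\phi_{N-1}$, the same use of \eqref{ts2} shifted by two, and the same telescoping union over odd $N$ down to $\bigl(0,\tfrac\pi4\bigr)$. You even correctly identify the threshold as $\overline x_{N-2}$ (the paper writes $\overline x_N$ at that spot, which is a small typo), so there is nothing to add.
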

\begin{proof}
	The case $N=3$ is included in the statement of Lemma \ref{lemma3}.\par 
	When $N\geq5$, odd, as explained in the proof of Lemma \ref{lemma3}, we set $\overline \rho=\pi-\rho$ and we exploit \eqref{goniom}, getting through Lemma \ref{lemma3}  that
	\begin{equation}\label{equ1}
	G(x,\overline \rho)>0\qquad\forall x\in\bigg[\dfrac{\pi}{N+3},\dfrac{\pi}{N+1}\bigg),\,\,\forall \overline \rho\in\bigg(0,\dfrac{\pi}{N+1}\bigg)
	\end{equation}
	with $\frac{\pi}{N+1}<\frac{\pi}{4}$.  Moreover, with $\overline C_{N}$ as in \eqref{CCNdef}, we estimate
	\begin{equation}\label{inequ3}
	\begin{split}
	G(x,\overline \rho)&=\dfrac{1}{2\pi}\sum_{\substack{ m=1\\ \text{odd}}}^{N-2}\bigg[ \dfrac{\phi_m}{m^3}\sin(mx)\sin(m\overline \rho)-\dfrac{\phi_{m+1}}{(m+1)^3}\sin[(m+1)x]\sin[(m+1)\overline\rho]\bigg]\\
	&+\dfrac{1}{2\pi}\sum_{m=N}^\infty (-1)^{m+1}\dfrac{\phi_m}{m^3}\sin(mx)\sin(m\overline\rho)\\
	&>\dfrac{\phi_{N-1}}{2\pi}\sin(x)\, \bigg[\dfrac{1}{2}+\sum_{\substack{ m=3 \\ \text{odd}}}^{N-2} \bigg(\dfrac{1}{m^\frac{3}{2}}-\dfrac{1}{(m+1)^\frac{3}{2}}\bigg)^2\bigg]   \bigg\{\sin(\overline \rho)- \overline C_{N-2} \bigg\}\hspace{0mm}\qquad \forall x,\overline\rho\in\bigg(0,\frac{\pi}{N-1}\bigg),
	\end{split}
	\end{equation}
	in which we used Lemma \ref{lemmasin2} of the Appendix with $N-2$ (instead of N) and $|\sin(mx)|< m\sin(x)$ for all $x\in(0,\pi)$. For $N\geq 5$, in the following we denote by $\overline \rho_{ N}$ the unique solution to the equation $\sin(\overline \rho_{ N})=\overline C_{N-2}$, with $\overline C_N$ as in \eqref{CCNdef}. Clearly, $\overline \rho_{ N}=\overline x_{ N}$ and by \eqref{ts2} we obtain
\begin{equation}\label{overlinerho}
	\overline\rho_N<\frac{\pi}{N+1}\,.
	\end{equation}
	Hence, through \eqref{inequ3}, for all $N\geq 5$ odd, we have 
	\begin{equation*}\label{equ3bis}
	G_p(q)>0 \qquad \forall q=(x,y)\in\bigg(0,\frac{\pi}{N-1}\bigg)\times[-\ell,\ell]\, \wedge \,  p=(\overline \rho,w)\in \bigg(\overline \rho_N,\frac{\pi}{N-1} \bigg)\times[-\ell,\ell],
	\end{equation*}
	 implying by \eqref{overlinerho} that
	\begin{equation}\label{equ3}
	G_p(q)>0\qquad\forall q=(x,y)\in\bigg(0,\dfrac{\pi}{N-1}\bigg)\times[-\ell,\ell]\, \wedge \, p=(\overline \rho,w)\in\bigg[\dfrac{\pi}{N+1},\dfrac{\pi}{N-1}\bigg)\times[-\ell,\ell].
	\end{equation}
	Combining \eqref{equ1}-\eqref{equ3}, since $[\frac{\pi}{  N+3},\frac{\pi}{ N+1}) \subset (0,\frac{\pi}{N-1})$, we get
	\begin{equation}\label{equ1bis}
	G(x,\overline \rho)>0\qquad\forall x\in\bigg[\dfrac{\pi}{N+3},\dfrac{\pi}{N+1}\bigg),\,\,\forall \overline \rho\in\bigg(0,\dfrac{\pi}{N-1}\bigg)\,.
	\end{equation}
	If $N=5$ the above inequality yields the thesis since $\dfrac{\pi}{N-1}=\dfrac{\pi}{4}$.\par Fix now $\overline N \geq 7$, odd, from \eqref{equ1bis} we clearly have
	$$
	G(x,\overline \rho)>0\qquad\forall x\in\bigg[\dfrac{\pi}{ \overline N+3},\dfrac{\pi}{ \overline N+1}\bigg),\,\,\forall \overline \rho\in\bigg(0,\dfrac{\pi}{ \overline N-1}\bigg)\,;
	$$
	moreover, since $[\frac{\pi}{ \overline { N}+3},\frac{\pi}{ \overline { N}+1}) \subset (0,\frac{\pi}{N-1})$  for all $N\leq \overline N-2$, through \eqref{equ3} we also have
\begin{equation*}
\begin{split}
G(x,\overline \rho)>0\qquad\forall x\in\bigg[\dfrac{\pi}{ \overline N+3},\dfrac{\pi}{ \overline N+1}\bigg),\,\,\forall\overline \rho\in\bigg[\dfrac{\pi}{N+1},\dfrac{\pi}{N-1}\bigg)\qquad \forall N= 5,..., \overline { N}-2, \text{ odd}
\end{split}
\end{equation*}
and the thesis follows by noticing that $ \bigg(0,\dfrac{\pi}{ \overline { N}-1}\bigg) \bigcup \left( \displaystyle{\bigcup_{\substack{ N=5 \\ \text{odd}}}^{\overline N-2}} \bigg[\dfrac{\pi}{N+1},\dfrac{\pi}{N-1}\bigg)\right)= \bigg(0,\dfrac{\pi}{4}\bigg)$.

\end{proof}

By Lemma \ref{lemma4} we derive the following:
 \begin{proposition}\label{prop3}
	$G_p(q)> 0$ if
	\begin{equation*}
	q=(x,y)\in \bigg(0,\dfrac{\pi}{4}\bigg)\times[-\ell,\ell]\, \wedge \, p=(\rho,w)\in \bigg(\dfrac{3}{4}\pi,\pi\bigg)\times[-\ell,\ell]\,
	\end{equation*}
	\begin{center}
		or
	\end{center}
	\begin{equation*}
	q=(x,y)\in \bigg(\dfrac{3}{4}\pi,\pi\bigg)\times[-\ell,\ell]\, \wedge \, p=(\rho,w)\in \bigg(0,\dfrac{\pi}{4}\bigg)\times[-\ell,\ell]\,.
	\end{equation*}
\end{proposition}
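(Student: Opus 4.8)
The plan is to derive the statement from Lemma \ref{lemma4} by a covering argument and then to reduce the remaining region to the first one via a trigonometric reflection, exactly as Proposition \ref{prop2} was obtained from Lemma \ref{lemma11}. Almost all of the analytic work has already been carried out (in Theorem \ref{monotonia} and in Lemmas \ref{lemma3}--\ref{lemma4}), so what is left is essentially bookkeeping.

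\textbf{Step 1 (the region $q$ near $0$, $p$ near $\pi$).} By Lemma \ref{lemma4}, for every odd integer $N\geq 3$ one has $G_p(q)>0$ whenever $q=(x,y)\in\bigg[\dfrac{\pi}{N+3},\dfrac{\pi}{N+1}\bigg)\times[-\ell,\ell]$ and $p=(\rho,w)\in\bigg(\dfrac{3}{4}\pi,\pi\bigg)\times[-\ell,\ell]$. Since these $x$-intervals, taken over all odd $N\geq 3$, cover $(0,\pi/4)$, i.e.
\[
\bigcup_{\substack{N\geq 3\\ \text{odd}}}\bigg[\frac{\pi}{N+3},\frac{\pi}{N+1}\bigg)=\bigg(0,\frac{\pi}{4}\bigg),
\]
it follows at once that $G_p(q)>0$ for all $q=(x,y)\in\big(0,\tfrac{\pi}{4}\big)\times[-\ell,\ell]$ and $p=(\rho,w)\in\big(\tfrac{3}{4}\pi,\pi\big)\times[-\ell,\ell]$ (equivalently, one may first combine the bands for the odd indices up to $N$ to get positivity on $[\pi/(N+3),\pi/4)$ and then let $N\to+\infty$, exactly as in the proof of Proposition \ref{prop2}).

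\textbf{Step 2 (the region $q$ near $\pi$, $p$ near $0$).} Here I would use the reflection $\overline x:=\pi-x\in\big(0,\tfrac{\pi}{4}\big)$ and $\overline \rho:=\pi-\rho\in\big(\tfrac{3}{4}\pi,\pi\big)$. By \eqref{goniom} one has $\sin(mx)=(-1)^{m+1}\sin(m\overline x)$ and $\sin(m\rho)=(-1)^{m+1}\sin(m\overline\rho)$, so the two sign factors multiply to $1$ and cancel in each summand of the Fourier expansion \eqref{green}; hence $G(x,y,\rho,w)=G(\overline x,y,\overline\rho,w)$ for all admissible arguments. The right-hand side is positive by Step 1, which gives the claim for $q=(x,y)\in\big(\tfrac{3}{4}\pi,\pi\big)\times[-\ell,\ell]$ and $p=(\rho,w)\in\big(0,\tfrac{\pi}{4}\big)\times[-\ell,\ell]$.

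\textbf{Expected difficulty.} There is no substantial obstacle at this stage: the delicate estimates are confined to Lemmas \ref{lemma3}--\ref{lemma4} and to Theorem \ref{monotonia}, on which I am allowed to rely. The only points requiring minor care are checking that the half-open intervals $[\pi/(N+3),\pi/(N+1))$ over odd $N\geq 3$ tile $(0,\pi/4)$ without gaps, and making sure that the two parity factors in the reflection cancel (rather than combining to $(-1)^{m}$). Together with Propositions \ref{prop51} and \ref{prop2}, this proposition then yields $G_p(q)>0$ throughout $\widetilde\Omega\times\widetilde\Omega$, hence Theorem \ref{greenpositiva}.
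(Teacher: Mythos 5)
Your proposal is correct and follows essentially the same route as the paper: Step 1 is the paper's passage to the limit $N\to\infty$ in Lemma \ref{lemma4} (identical covering of $(0,\pi/4)$ by the bands $[\pi/(N+3),\pi/(N+1))$, odd $N\geq3$), and Step 2 is the paper's use of the reflection \eqref{goniom} to reduce the second region to the first. Your Step 2 is in fact slightly cleaner than the printed argument: you make explicit that applying \eqref{goniom} to both $x$ and $\rho$ produces two factors $(-1)^{m+1}$ that multiply to $1$, i.e. $G(x,y,\rho,w)=G(\pi-x,y,\pi-\rho,w)$, whereas the paper's closing display carries an unnecessary $(-1)^{m+1}$ inside the sum that should have cancelled; the conclusion it appeals to, equation \eqref{eq19}, is nonetheless the right one and your formulation matches it precisely.
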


\begin{proof}
	From Lemma \ref{lemma4} we have 
	\begin{equation*}\label{eq18}
	G_p(q)>0\qquad\forall q=(x,y)\in\bigg[\dfrac{\pi}{N+3},\dfrac{\pi}{4}\bigg)\times[-\ell,\ell]\, \wedge \, p=(\rho,w)\in\bigg(\dfrac{3}{4}\pi,\pi\bigg)\times[-\ell,\ell]\quad \forall N\geq 3,\text{odd}.
	\end{equation*}
	Passing to the limit  $N\rightarrow+\infty$ we obtain 
	\begin{equation}\label{eq19}
	G_p(q)>0\qquad\forall q=(x,y)\in\bigg(0,\dfrac{\pi}{4}\bigg)\times[-\ell,\ell]\, \wedge \, p=(\rho,w)\in\bigg(\dfrac{3}{4}\pi,\pi\bigg)\times[-\ell,\ell].
	\end{equation}
	
	To complete the proof we exploit the trigonometric relations \eqref{goniom}.
	Then, for $x\in\big(\frac{3}{4}\pi,\pi\big)$ and $\rho\in\big(0,\frac{\pi}{4}\pi\big)$ we set $\overline x=\pi-x$, $\overline \rho=\pi-\rho$ and we study the positivity of
	\begin{equation*}
	\begin{split}
	G(\overline x,\overline \rho)&= \dfrac{1}{2\pi}\sum_{m=1}^\infty(-1)^{m+1}\dfrac{\phi_m}{m^3}\sin(m\overline x)\sin(m\overline \rho)\quad\forall\overline x\in\bigg(0,\dfrac{\pi}{4}\bigg)\,\,\forall\overline \rho\in\bigg(\dfrac{3}{4}\pi,\pi\bigg),
	\end{split}
	\end{equation*}
	already guaranteed by \eqref{eq19}.	
\end{proof}

\textbf{Proof of Theorem \ref{greenpositiva} completed}. The proof follows by combining the statements of Propositions \ref{prop51},  \ref{prop2} and  \ref{prop3}.

\section{Appendix}

\subsection{Appendix A}
In this section we collect two technical results used in Section \ref{proof1} to prove the monotonicity issue \eqref{decreasing} of Theorem \ref{monotonia}.
\begin{lemma}\label{AF_F1}
	Given $F(z)$ and $\overline F(z)$ as in \eq{cost1}, there holds:
	$$\dfrac{2F(z)-F'(z)}{[F(z)]^2}\pm\bigg[\dfrac{2\overline F(z)-\overline F'(z)}{[\overline F(z)]^2}\bigg]<0\qquad \forall z>0.$$
\end{lemma}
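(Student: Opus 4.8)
The plan is to make both quantities completely explicit and then reduce everything to one‑variable inequalities. Since $F'(z)=(3+\sigma)\cosh(2z)-(1-\sigma)$ and $\overline F'(z)=(3+\sigma)\cosh(2z)+(1-\sigma)$, setting $a(z):=2F(z)-F'(z)$ and $\bar a(z):=2\overline F(z)-\overline F'(z)$ one finds
$$a(z)=-(3+\sigma)e^{-2z}+(1-\sigma)(1-2z),\qquad \bar a(z)=-(3+\sigma)e^{-2z}-(1-\sigma)(1-2z),$$
so that $a+\bar a=-2(3+\sigma)e^{-2z}<0$ and $a-\bar a=2(1-\sigma)(1-2z)$; throughout I also use $0<F(z)<\overline F(z)$ for $z>0$, already recorded in the paper. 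A preliminary step is to check $a(z)<0$ on $(0,+\infty)$: indeed $a(0^{+})=-2(1+\sigma)<0$, and $a'(z)=2(3+\sigma)e^{-2z}-2(1-\sigma)$ vanishes only at $z^{\ast}=\tfrac12\log\tfrac{3+\sigma}{1-\sigma}>0$, where $a$ attains its global maximum $a(z^{\ast})=-2(1-\sigma)z^{\ast}<0$.

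For the ``$+$'' sign I must bound $\frac{a}{F^{2}}+\frac{\bar a}{\overline F^{2}}$. If $\bar a(z)\le 0$ both summands are negative because $a<0$; if $\bar a(z)>0$, then $0<F^{2}<\overline F^{2}$ gives $\frac{\bar a}{\overline F^{2}}<\frac{\bar a}{F^{2}}$, whence $\frac{a}{F^{2}}+\frac{\bar a}{\overline F^{2}}<\frac{a+\bar a}{F^{2}}<0$. So this case is immediate.

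For the ``$-$'' sign I have to show $\frac{a}{F^{2}}<\frac{\bar a}{\overline F^{2}}$. On $[\tfrac12,+\infty)$ the sign information suffices: there $a-\bar a=2(1-\sigma)(1-2z)\le 0$, so $a\le\bar a$; if $\bar a>0$ the right‑hand side is positive while the left is negative, and if $\bar a\le 0$ then $|a|\ge|\bar a|$ together with $F^{2}<\overline F^{2}$ gives $\frac{a}{F^{2}}=-\frac{|a|}{F^{2}}\le-\frac{|\bar a|}{F^{2}}<-\frac{|\bar a|}{\overline F^{2}}=\frac{\bar a}{\overline F^{2}}$. The genuine difficulty, and the main obstacle, is the range $z\in(0,\tfrac12)$, where $|a|<|\bar a|$ but also $F^{2}<\overline F^{2}$, so no naive sign comparison works. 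On this interval $a(z)<0$ forces $P:=(3+\sigma)e^{-2z}>Q:=(1-\sigma)(1-2z)>0$, and since $|a|=P-Q$, $|\bar a|=P+Q$, the desired $|a|\,\overline F^{2}>|\bar a|\,F^{2}$ rewrites as $P(\overline F^{2}-F^{2})>Q(\overline F^{2}+F^{2})$. Substituting $\overline F^{2}-F^{2}=2(3+\sigma)(1-\sigma)z\sinh(2z)$ and $\overline F^{2}+F^{2}=\tfrac{(3+\sigma)^{2}}{2}\sinh^{2}(2z)+2(1-\sigma)^{2}z^{2}$, and discarding the term $2(1-\sigma)^{2}z^{2}<\tfrac{(3+\sigma)^{2}}{2}\sinh^{2}(2z)$ (which follows from $\sinh(2z)>2z$ and $1-\sigma<3+\sigma$), it is enough to prove $2z\,e^{-2z}>(1-2z)\sinh(2z)$ on $(0,\tfrac12)$. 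Multiplying by $2e^{2z}$ and putting $s=2z\in(0,1)$ this becomes $2s>(1-s)(e^{2s}-1)$, i.e. $h(s):=s+1-(1-s)e^{2s}>0$; and indeed $h(0)=h'(0)=0$ while $h''(s)=4s\,e^{2s}>0$ for $s>0$, so $h$ is strictly increasing with $h(0)=0$, hence $h>0$ on $(0,+\infty)$. This closes the remaining range and completes the proof of both inequalities.
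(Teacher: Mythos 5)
Your proof is correct, and it takes a genuinely different route from the paper's in the one place where the lemma is actually hard. Both proofs begin the same way: compute $a=2F-F'=-(3+\sigma)e^{-2z}+(1-\sigma)(1-2z)$ and $\bar a=2\overline F-\overline F'=-(3+\sigma)e^{-2z}-(1-\sigma)(1-2z)$, observe $a<0$ via the critical-point calculation, and note $a+\bar a<0$ always and $a-\bar a\le 0$ once $z\ge\tfrac12$; up to that point the two arguments coincide (the paper calls your $a$ its $\alpha$). The divergence is on $z\in(0,\tfrac12)$, where no sign comparison works. The paper there writes out
\[
\dfrac{2F-F'}{F^2}-\dfrac{2\overline F-\overline F'}{\overline F^2}=\dfrac{2(1-\sigma)}{[F\overline F]^2}\,\beta(z)
\]
and kills $\beta$ by a second-derivative analysis: $\beta(0)=\beta'(0)=0$, then proving $\beta''<0$ on $(0,\tfrac12]$ requires splitting that interval into three pieces and introducing yet another auxiliary function $\overline\beta$ with its own two-step Taylor argument. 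Your approach instead factors $\overline F^2-F^2=(\overline F-F)(\overline F+F)=2z(1-\sigma)(3+\sigma)\sinh(2z)$, recasts the target as $P(\overline F^2-F^2)>Q(\overline F^2+F^2)$ with $P=(3+\sigma)e^{-2z}$, $Q=(1-\sigma)(1-2z)$, weakens $\overline F^2+F^2$ by the crude bound $2(1-\sigma)^2z^2<\tfrac{(3+\sigma)^2}{2}\sinh^2(2z)$, and the $\sigma$-dependence cancels, leaving the single scalar inequality $h(s)=s+1-(1-s)e^{2s}>0$ on $(0,1)$, settled by $h(0)=h'(0)=0$, $h''(s)=4se^{2s}>0$. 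This is appreciably shorter and more transparent than the paper's $\beta$--$\overline\beta$ machinery, and as a bonus your $h$-argument in fact proves the inequality on all of $(0,\infty)$, not just the awkward strip. One micro-remark: in the sub-case $z\ge\tfrac12$, $\bar a\le 0$, the chain $-|\bar a|/F^2<-|\bar a|/\overline F^2$ degenerates to an equality when $\bar a=0$ (which does occur at one $z_0>\tfrac12$); the conclusion is still immediate there since then $a/F^2<0=\bar a/\overline F^2$, but it is worth writing that sub-sub-case out explicitly.
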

\begin{proof}
	We have
	\begin{equation*}
	\begin{split}
	2F(z)-F'(z)&=(3+\sigma)[\sinh(2z)-\cosh(2z)]+(1-\sigma)(1-2z):=\alpha(z)\\2\overline F(z)-\overline F'(z)&=(3+\sigma)[\sinh(2z)-\cosh(2z)]-(1-\sigma)(1-2z)
	\end{split}
	\end{equation*}
	and we observe that $\alpha(0)=-2(1+\sigma)<0$ and $\alpha(z)\sim -2z(1-\sigma)\rightarrow-\infty$ for $z\rightarrow +\infty$; $\alpha(z)$ has stationary points satisfying $(3+\sigma)[\sinh(2\overline z)-\cosh(2\overline z)]=-(1-\sigma)$, hence $\alpha(\overline z)=-2(1-\sigma)\overline z<0$ for all $\overline z>0$, implying $\alpha(z)<0$ for all $z>0$. \par 
	From above and recalling that $\overline F(z)>F(z)>0$ for all $z>0$, a sufficient condition for the thesis is: $2F(z)-F'(z)\pm[2\overline F(z)-\overline F'(z)]<0$ for all $z>0$. To this aim we notice that:
	\begin{equation*}
	\begin{split}
	&2F(z)-F'(z)+2\overline F(z)-\overline F'(z)=2(3+\sigma)[\sinh(2z)-\cosh(2z)]<0\qquad\forall z> 0\\&2F(z)-F'(z)-2\overline F(z)+\overline F'(z)=2(1-\sigma)(1-2z)<0\qquad\hspace{22mm} \forall z>\frac{1}{2}.
	\end{split}
	\end{equation*}
	Therefore, it remains to prove the negativity of the term below for all $0<z\leq1/2$:
	\begin{equation*}
	\begin{split}
	&\dfrac{2F(z)-F'(z)}{[F(z)]^2}-\dfrac{2\overline F(z)-\overline F'(z)}{[\overline F(z)]^2}\\&=\dfrac{2(1-\sigma)}{[F(z)\overline F(z)]^2}\bigg\{\dfrac{(3+\sigma)^2}{16}\bigg[e^{4z}\big(1-2z\big)+e^{-4z}(1+6z)-4z-2\bigg]+(1-\sigma)^2(z^2-2z^3)\bigg\}\\&:=\dfrac{2(1-\sigma)}{[F(z)\overline F(z)]^2}\beta(z).
	\end{split}
	\end{equation*}
	We want to prove that $\beta(z)<0$ for all $0<z\leq1/2$; we compute
	\begin{equation*}
	\begin{split}
&\beta'(z)=\dfrac{(3+\sigma)^2}{8}\bigg[e^{4z}\big(1-4z\big)+e^{-4z}(1-12z)-2\bigg]+2(1-\sigma)^2(z-3z^2)\\&\beta''(z)=2\{(3+\sigma)^2\big[-ze^{4z}+e^{-4z}(3z-1)\big]+(1-\sigma)^2(1-6z)\}.
	\end{split}
	\end{equation*} 
We observe that $\beta(0)=\beta'(0)=0$ and $\beta''(0)<0$, hence if we prove $\beta''(z)<0$ in $(0,\frac{1}{2}]$ we have the thesis;
we note that
$$\max_{z\in[\frac{1}{3},\frac{1}{2}]}-ze^{4z}=-\frac{e^{4/3}}{3}\qquad\max_{z\in[\frac{1}{3},\frac{1}{2}]}e^{-4z}(3z-1)=\frac{1}{2e^2},$$
so that $\frac{1}{2e^2}-\frac{e^{4/3}}{3}<0$ and, in turn, $\beta''(z)<0$ for $z\in[\frac{1}{3},\frac{1}{2}]$. Moreover $\beta''(z)<0$ also for $z\in[\frac{1}{6},\frac{1}{3}]$, since is given by the sum of non positive terms; this implies $\beta''(z)<0$ for all $z\in[\frac{1}{6},\frac{1}{2}]$. It remains to establish the sign of $\beta''(z)$ in $(0,\frac{1}{6}]$; for $z\in (0,\frac{1}{6}]$ we observe that the unique positive term in $\beta''(z)$ is $(1-6z)$, therefore if we prove $\overline \beta(z):=-ze^{4z}+e^{-4z}(3z-1)+1-6z<0$ for all $z\in  (0,\frac{1}{6}]$
we have $\beta''(z)<0$ for $z\in \big(0,\frac{1}{6}\big]$. We get
$
\overline \beta'(z)=-6-e^{4z}(1+4z)+e^{-4z}(7-12z)$ and $
\overline \beta''(z)=-8\big[e^{4z}(1+2z)+e^{-4z}(5-6z)\big]<0$  for  $z\in\big(0,\frac{1}{6}\big]$; being $\overline \beta(0)=\overline\beta'(0)=0$, we have $\overline \beta(z)<0$ for all $z\in \big(0,\frac{1}{6}\big]$ and the thesis.

\end{proof}

\begin{lemma}\label{parita}
	Given $\zeta(k,z)$, $\eta(k,z)$, $\psi(k,z)$, $\xi(k,z)$ as in \eq{cost}, we have that
	\begin{align*}
	\zeta(-k,z)&=\zeta(k,z),\quad &\zeta_z(-k,z)&=\zeta_z(k,z),\quad &\forall k\in[-1,1], \forall z>0,\\
	\eta(-k,z)&=-\eta(k,z),\quad   &\eta_z(-k,z)&=-\eta_z(k,z),\quad &\forall k\in[-1,1], \forall z>0,\\
\psi(-k,z)&=\psi(k,z),\quad& \psi_z(-k,z)&=\psi_z(k,z),\quad &\forall k\in[-1,1], \forall z>0,\\
\xi(-k,z)&=-\xi(k,z),\quad &\xi_z(-k,z)&=-\xi_z(k,z)\quad &\forall k\in[-1,1]\,\forall z>0\,.
\end{align*}
	
\end{lemma}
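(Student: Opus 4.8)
The plan is to reduce the whole statement to a term-by-term inspection of explicit formulas, so that the proof is essentially a bookkeeping exercise. First I would make the four functions explicit: substituting $r=kz$ into the definitions \eqref{cost1} of $\overline\zeta,\overline\eta,\overline\psi,\overline\xi$, formula \eqref{cost} gives, for instance,
\begin{equation*}
\zeta(k,z)=\left(\dfrac{4}{1-\sigma}-z(1+\sigma)\right)\cosh(kz)\cosh(z)+\left(\dfrac{(1+\sigma)^2}{1-\sigma}+2z\right)\cosh(kz)\sinh(z)-2kz\sinh(kz)\cosh(z)+kz(1+\sigma)\sinh(kz)\sinh(z),
\end{equation*}
together with fully analogous expressions for $\eta$, $\psi$ and $\xi$, which I would record in full since they are used elsewhere in Section \ref{proof1}.

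The key observation is then that every term in these four expressions is a product of a function of $z$ alone with exactly one of the four elementary blocks $\cosh(kz)$, $kz\sinh(kz)$, $\sinh(kz)$, $kz\cosh(kz)$; for $z$ fixed, the first two are \emph{even} in $k$ and the last two are \emph{odd} in $k$ (note that $kz\sinh(kz)$ is even, being the product of the two odd functions $kz$ and $\sinh(kz)$, and similarly $kz\cosh(kz)$ is odd). Inspecting the formulas, $\zeta$ and $\psi$ contain only the even blocks $\cosh(kz)$ and $kz\sinh(kz)$, whereas $\eta$ and $\xi$ contain only the odd blocks $\sinh(kz)$ and $kz\cosh(kz)$. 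This yields at once the first column of identities: $\zeta(-k,z)=\zeta(k,z)$, $\psi(-k,z)=\psi(k,z)$, $\eta(-k,z)=-\eta(k,z)$, $\xi(-k,z)=-\xi(k,z)$ for all $k\in[-1,1]$ and $z>0$.

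For the second column I would simply differentiate these four identities with respect to $z$: the reflection $k\mapsto-k$ acts only on the first argument and hence commutes with $\partial_z$, so differentiating $\zeta(-k,z)=\zeta(k,z)$ in $z$ with $-k$ held fixed gives $\zeta_z(-k,z)=\zeta_z(k,z)$, and the same argument applied to the three remaining identities shows that $\psi_z$ is even in $k$ while $\eta_z$ and $\xi_z$ are odd in $k$. I do not expect any genuine obstacle here; the only step that requires a moment's care is the sign bookkeeping in the substitution $r=kz$, precisely the observation highlighted above that the coefficients proportional to $kz$ in $\zeta,\psi$ multiply $\sinh(kz)$ (producing even contributions), while those in $\eta,\xi$ multiply $\cosh(kz)$ (producing odd contributions).
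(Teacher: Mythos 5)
Your proof is correct and follows essentially the same route as the paper's, which likewise makes the substitution $r=kz$, writes out the explicit expressions, and concludes by direct inspection of parity. Your observation that the derivative parities are automatic (because $\partial_z$ commutes with $k\mapsto -k$) is the same point the paper makes implicitly when it notes that the explicit form of the derivatives is not needed for the statement; your ``block'' organization is a clean way to package the inspection.
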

\begin{proof}
The proof follows by a direct inspection of the analytic expression of each function that we write here below. 
\begin{equation*}
\begin{split}
\zeta(k,z)=&\bigg(\dfrac{4}{1-\sigma}-z(1+\sigma)\bigg)\cosh(kz)\cosh(z)+\bigg(\dfrac{(1+\sigma)^2}{1-\sigma}+2z\bigg)\cosh(kz)\sinh(z)\\&-2kz\sinh(kz)\cosh(z)+kz(1+\sigma)\sinh(kz)\sinh(z)\\
\zeta_z(k,z)=&\bigg(\dfrac{2\sigma(1+\sigma)}{1-\sigma}+2z(1-k^2)\bigg)\cosh(kz)\cosh(z)+\bigg(\dfrac{2(3-\sigma)}{1-\sigma}-z(1+\sigma)(1-k^2)\bigg)\cosh(kz)\sinh(z)\\&+k\dfrac{2(1+\sigma)}{1-\sigma}\sinh(kz)\cosh(z)+k\dfrac{2(1+\sigma)}{1-\sigma}\sinh(kz)\sinh(z),
\end{split}
\end{equation*}
\begin{equation*}
\begin{split}
\eta(k,z)=&kz(1+\sigma)\cosh(kz)\cosh(z)-2kz\cosh(kz)\sinh(z)\\&+\sinh(kz)\cosh(z)\bigg(\dfrac{(1+\sigma)^2}{1-\sigma}+2z\bigg)+\sinh(kz)\sinh(z)\bigg(\dfrac{4}{1-\sigma}-z(1+\sigma)\bigg)\\
\eta_z(k,z)=&k\dfrac{2(1+\sigma)}{1-\sigma}\cosh(kz)\cosh(z)+k\dfrac{2(1+\sigma)}{1-\sigma}\cosh(kz)\sinh(z)\\&+\bigg(\dfrac{2(3-\sigma)}{1-\sigma}-z(1+\sigma)(1-k^2)\bigg)\sinh(kz)\cosh(z)+\bigg(\dfrac{2\sigma(1+\sigma)}{1-\sigma}+2z(1-k^2)\bigg)\sinh(kz)\sinh(z),
\end{split}
\end{equation*}
\begin{equation*}
\begin{split}
\psi(k,z)=&\big[2+z(1-\sigma)\big]\cosh(kz)\cosh(z)+\big[-(1+\sigma)+z(1-\sigma)\big]\cosh(kz)\sinh(z)\\&-kz(1-\sigma)\sinh(kz)\cosh(z)-kz(1-\sigma)\sinh(kz)\sinh(z)\\
\psi_z(k,z)=&\big[-2\sigma+z(1-k^2)(1-\sigma)\big]\cosh(kz)\cosh(z)+\big[3-\sigma+z(1-k^2)(1-\sigma)\big]\cosh(kz)\sinh(z)\\&+k(1+\sigma)\sinh(kz)\cosh(z)-2k\sinh(kz)\sinh(z)
\end{split}
\end{equation*}
\begin{equation*}
\begin{split}
\xi(k,z)=&-kz(1-\sigma)\cosh(kz)\cosh(z)-kz(1-\sigma)\cosh(kz)\sinh(z)\\&+\sinh(kz)\cosh(z)\big[-(1+\sigma)+z(1-\sigma)\big]+\sinh(kz)\sinh(z)\big[2+z(1-\sigma)\big]\\
\xi_z(k,z)=&-2k\cosh(kz)\cosh(z)+k(1+\sigma)\cosh(kz)\sinh(z)\\&+\big[3-\sigma+z(1-\sigma)(1-k^2)\big]\sinh(kz)\cosh(z)+\big[-2\sigma+z(1-\sigma)(1-k^2)\big]\sinh(kz)\sinh(z).
\end{split}
\end{equation*}
Clearly, the explicit form of the derivatives is not needed to prove the statement but it is used in the proofs of Section \ref{proof1}, this is the reason why we decided to write it down here.

\end{proof}

\subsection{Appendix B}
In this section we prove two inequalities that we repeatedly exploit to estimate the Fourier series of the Green function in Section \ref{proof2}.
\begin{lemma}\label{lemmasin}
 	Let $N\geq 2$ be an integer. There holds
 	\begin{equation}\label{sin2}
 	\sin(m\rho)\sin(mx)>\sin(\rho)\sin(x)\qquad \forall \rho,x\in\bigg(0,\dfrac{\pi}{N+1}\bigg),\quad \forall m=2,\dots,N.
 	\end{equation}
 \end{lemma}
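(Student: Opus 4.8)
The plan is to reduce the product inequality \eqref{sin2} to the single–factor estimate
\[
\sin(mt)>\sin(t)\qquad\forall\, t\in\Big(0,\tfrac{\pi}{N+1}\Big),\ \forall\, m\in\{2,\dots,N\}.
\]
Indeed, for such $t$ and $m$ one has $0<t<mt<\tfrac{m\pi}{N+1}\le\tfrac{N\pi}{N+1}<\pi$, so that both $\sin t$ and $\sin(mt)$ are strictly positive; hence, applying the displayed estimate once with $t=\rho$ and once with $t=x$ and multiplying the two inequalities between positive numbers gives \eqref{sin2} at once. So the whole matter is the one–factor claim.

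To prove the claim I would split according to the size of $mt$, using that $N\ge 2$ forces $t<\tfrac{\pi}{N+1}\le\tfrac{\pi}{3}<\tfrac{\pi}{2}$. If $mt\le\tfrac{\pi}{2}$, then $0<t<mt\le\tfrac{\pi}{2}$ and the monotonicity of $\sin$ on $[0,\tfrac{\pi}{2}]$ yields $\sin(t)<\sin(mt)$. If instead $mt\in\big(\tfrac{\pi}{2},\pi\big)$, I rewrite $\sin(mt)=\sin(\pi-mt)$ with $\pi-mt\in(0,\tfrac{\pi}{2})$; since $m\le N$ we have $t<\tfrac{\pi}{N+1}\le\tfrac{\pi}{m+1}$, i.e. $(m+1)t<\pi$, hence $0<t<\pi-mt<\tfrac{\pi}{2}$, and again monotonicity of $\sin$ on $[0,\tfrac{\pi}{2}]$ gives $\sin(t)<\sin(\pi-mt)=\sin(mt)$. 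This covers all cases.

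The argument presents no real obstacle: the only thing requiring attention is verifying that every sine argument lands in an interval where monotonicity of $\sin$ (directly, or after the reflection $s\mapsto\pi-s$) applies, which is exactly guaranteed by the elementary bounds $mt<\pi$ and $t<\tfrac{\pi}{m+1}$, both consequences of $2\le m\le N$. Strictness is automatic, since $t<mt$ strictly (as $m\ge 2$ and $t>0$) in the first case, and $t<\pi-mt$ strictly in the second.
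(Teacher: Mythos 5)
Your proof is correct. Both you and the paper reduce the two-variable claim to the single-factor estimate $\sin(mt)>\sin(t)$ on $\big(0,\tfrac{\pi}{m+1}\big)$ and then multiply two strictly positive inequalities, so the overall structure coincides. The difference lies in how that key inequality is established. The paper determines \emph{all} zeros of $x\mapsto\sin(mx)-\sin(x)$ via the complex-exponential identity, getting $x=\tfrac{2k\pi}{m-1}$ or $x=\tfrac{(2k+1)\pi}{m+1}$, and then concludes positivity on $\big(0,\tfrac{\pi}{m+1}\big)$ from the sign of the derivative at $0$ and the fact that $\tfrac{\pi}{m+1}$ is the first positive zero. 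You instead split on whether $mt\le\tfrac{\pi}{2}$ or $mt\in\big(\tfrac{\pi}{2},\pi\big)$, using in the first case the monotonicity of $\sin$ on $\big[0,\tfrac{\pi}{2}\big]$ and in the second the reflection $\sin(mt)=\sin(\pi-mt)$ together with $(m+1)t<\pi$. Your route is more elementary (no zero-set analysis needed) and equally rigorous; the paper's zero-localization technique is the same machinery it reuses in Lemma~\ref{zeri} for the alternating-series estimate of Lemma~\ref{lemmasin2}, which is presumably why the authors chose that framing here.
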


\begin{proof} 
	Let $m\geq 2$ be an integer. Using the complex identity $\sin(mx)=\frac{1}{2i}(e^{imx}-e^{-imx})$ it is readily seen that 
	$$
	\sin(mx)-\sin(x)=0\quad \iff \quad  x=\dfrac{2k\pi}{m-1}\lor  x=\dfrac{(2k+1)\pi}{m+1}\qquad \text{for}\quad k\in\mathbb{Z}.
	$$
	By this, since $(\sin(mx)-\sin(x))'(0)=m-1>0$, we infer that
	$$ 
	\sin(mx)-\sin(x)>0\qquad \forall x\in\bigg(0,\dfrac{\pi}{m+1}\bigg)
	$$
	and, in turn, that
	\begin{equation*}\label{sin}
	\sin(mx)>\sin(x)\qquad \forall x\in\bigg(0,\dfrac{\pi}{N+1}\bigg),\quad \forall m=2,\dots,N.
	\end{equation*} 
Finally, we obtain \eq{sin2} thanks to the elementary implication
	\begin{equation*}\label{dis4}
	\forall a,b,c,d\in\mathbb{R}^+\,,\quad (a-b>0 \wedge c-d>0)\quad \Rightarrow\quad ac>bd\,.
	\end{equation*}
	
\end{proof}

 Before stating and prove the second inequality we need the following lemma.

\begin{lemma}\label{zeri}
Let $m\geq 3$ be an integer and set $a_m:=\big[\frac{1}{m^{3/2}}-\frac{1}{(m+1)^{3/2}}\big]^2$. The function
	$$\upsilon_m(t):=\frac{\sin(m t)}{m^2}-\frac{\sin[(m+1) t]}{(m+1)^{2}}-a_m \,\sin(t)\,, \quad\quad t\in [-\frac{2\pi}{m+1},\frac{2\pi}{m+1}\big]$$
	vanishes at $t=0$ and $t=\pm t_1$ with $t_1\in\big(\frac{2\pi}{2m+1},\frac{3\pi}{2m}\big)$. Furthermore, $\upsilon_m(t)>0$ in $[-\frac{2\pi}{m+1},-t_1)$ and  $(0,t_1)$ while $\upsilon_m(t)<0$ in $(-t_1,0)$ and  $(t_1,\frac{2\pi}{m+1}\big]$.
\end{lemma}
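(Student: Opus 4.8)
The plan is to use the oddness $\upsilon_m(-t)=-\upsilon_m(t)$ (so $\upsilon_m(0)=0$ is automatic and it suffices to analyse $t\in[0,\tfrac{2\pi}{m+1}]$) and to show that on $(0,\tfrac{2\pi}{m+1}]$ the function $\upsilon_m$ is strictly positive on $(0,t_1)$, vanishes at a single point $t_1\in(\tfrac{2\pi}{2m+1},\tfrac{3\pi}{2m})$, and is strictly negative on $(t_1,\tfrac{2\pi}{m+1}]$; the full statement and the count of zeros then follow by symmetry. I would split $(0,\tfrac{2\pi}{m+1}]$ into the three consecutive pieces $I_1=(0,\tfrac{2\pi}{2m+1}]$, $I_2=[\tfrac{2\pi}{2m+1},\tfrac{3\pi}{2m}]$, $I_3=[\tfrac{3\pi}{2m},\tfrac{2\pi}{m+1}]$ (for $m\ge3$ one has $\tfrac{2\pi}{2m+1}<\tfrac{3\pi}{2m}\le\tfrac{2\pi}{m+1}$, equality in the last only when $m=3$), and prove: $\upsilon_m>0$ on $I_1$; $\upsilon_m'<0$ on $I_2$; $\upsilon_m<0$ on $I_3$. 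Since $\upsilon_m$ is positive at the left endpoint of $I_2$ and negative on $I_3$, the strict monotonicity on $I_2$ then yields the unique zero $t_1$ there and the claimed sign pattern. The only estimates I expect to need are $a_m=(m^{-3/2}-(m+1)^{-3/2})^2<\tfrac94 m^{-5}$ (mean value theorem), hence $a_m m^2(m+1)<\tfrac13$ for $m\ge3$, and the elementary bound $\sin x\ge\tfrac2\pi x$ on $[0,\tfrac\pi2]$. It is convenient to set $G(t):=\tfrac{\sin mt}{m^2}-\tfrac{\sin(m+1)t}{(m+1)^2}$, so $\upsilon_m=G-a_m\sin$.

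For $I_1$: the map $\tfrac{\sin mt}{m}-\tfrac{\sin(m+1)t}{m+1}$ vanishes at $0$ and has derivative $\cos mt-\cos(m+1)t=2\sin((m+\tfrac12)t)\sin\tfrac t2>0$ on $(0,\tfrac{2\pi}{2m+1})$, hence is positive on $I_1$; separating the cases $\sin(m+1)t\ge0$ and $\sin(m+1)t<0$ one deduces $G(t)>\tfrac{\sin mt}{m^2(m+1)}>0$ on $I_1$. Next, writing $N(t):=m\cos mt\sin t-\sin mt\cos t$ one has $N(0)=0$ and $N'(t)=(1-m^2)\sin mt\sin t<0$ on $I_1$, so $t\mapsto\tfrac{\sin mt}{\sin t}$ is strictly decreasing on $I_1$ and therefore $\tfrac{\sin mt}{\sin t}\ge\tfrac1{2\cos(\pi/(2m+1))}>\tfrac12$ there; combining with $a_m m^2(m+1)<\tfrac13$ this gives $\tfrac{\sin mt}{m^2(m+1)}>a_m\sin t$, i.e. $\upsilon_m>0$ on $I_1$. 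Interval $I_3$ is the easy one: there $\tfrac{3\pi}{2}\le mt<(m+1)t\le2\pi$, so $\sin$ is non-positive and non-decreasing on $[mt,(m+1)t]$, whence $\sin mt\le\sin(m+1)t\le0$; using $\tfrac1{(m+1)^2}\le\tfrac1{m^2}$ one obtains $-\tfrac{\sin(m+1)t}{(m+1)^2}\le-\tfrac{\sin mt}{m^2}$, so $\upsilon_m(t)\le-a_m\sin t<0$ on $I_3$.

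The \textbf{main obstacle is the interval $I_2$}, where one must show $\upsilon_m'(t)=G'(t)-a_m\cos t<0$. From $\cos mt=\cos((m+\tfrac12)t-\tfrac t2)$ and $\cos(m+1)t=\cos((m+\tfrac12)t+\tfrac t2)$ one gets $G'(t)=\tfrac1{m(m+1)}\cos((m+\tfrac12)t)\cos\tfrac t2+\tfrac{2m+1}{m(m+1)}\sin((m+\tfrac12)t)\sin\tfrac t2$. On $I_2$ one has $\cos t\ge0$, so it suffices to prove $G'(t)<0$; moreover there $(m+\tfrac12)t\in[\pi,\tfrac{3\pi}{2}+\tfrac{3\pi}{4m}]$. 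If $(m+\tfrac12)t\in[\pi,\tfrac{3\pi}{2}]$, then $\cos((m+\tfrac12)t)\le0$ and $\sin((m+\tfrac12)t)\le0$ while $\cos\tfrac t2,\sin\tfrac t2>0$, so $G'(t)<0$ at once. If $(m+\tfrac12)t=\tfrac{3\pi}{2}+\phi$ with $\phi\in(0,\tfrac{3\pi}{4m}]$, then $\cos((m+\tfrac12)t)=\sin\phi$ and $\sin((m+\tfrac12)t)=-\cos\phi$, so $G'(t)<0$ amounts to $\sin\phi\cos\tfrac t2<(2m+1)\cos\phi\sin\tfrac t2$; since $t=\tfrac{3\pi+2\phi}{2m+1}$ and $\sin\tfrac t2\ge\tfrac2\pi\cdot\tfrac t2$ yield $(2m+1)\sin\tfrac t2>3$, it is enough that $3\cos\phi>\phi$, which holds because $\phi\le\tfrac{3\pi}{4m}\le\tfrac\pi4$ for $m\ge3$. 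Hence $\upsilon_m$ is strictly decreasing on $I_2$; as $\upsilon_m(\tfrac{2\pi}{2m+1})>0$ (from $I_1$) and $\upsilon_m<0$ on $I_3$, there is a unique $t_1\in(\tfrac{2\pi}{2m+1},\tfrac{3\pi}{2m})$ with $\upsilon_m(t_1)=0$, with $\upsilon_m>0$ on $(0,t_1)$ and $\upsilon_m<0$ on $(t_1,\tfrac{2\pi}{m+1}]$. Oddness then supplies the behaviour on $[-\tfrac{2\pi}{m+1},0)$ and shows that $0,\pm t_1$ are the only zeros in $[-\tfrac{2\pi}{m+1},\tfrac{2\pi}{m+1}]$.
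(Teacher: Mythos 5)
Your proof is correct, and it follows a genuinely different route from the paper's, although both proceed by oddness and a sign analysis on three or four consecutive sub‑intervals of $(0,\tfrac{2\pi}{m+1}]$. The paper's argument splits the positivity region into $(0,\tfrac{\pi}{2m+1}]$ and $(\tfrac{\pi}{2m+1},\tfrac{2\pi}{2m+1}]$, handling the first by showing $\upsilon_m''>0$ there (convexity plus $\upsilon_m(0)=0$, $\upsilon_m'(0)>0$) and the second through an auxiliary concave function $\overline\upsilon_m$ and its endpoint values, and it obtains negativity on $[\tfrac{3\pi}{2m},\tfrac{2\pi}{m+1}]$ again by convexity with negative endpoints. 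Your proof replaces both of these second-derivative arguments by direct lower and upper bounds on $\upsilon_m$ itself: on $I_1=(0,\tfrac{2\pi}{2m+1}]$ you first reduce $G(t)>\tfrac{\sin mt}{m^2(m+1)}$ from the positivity of $\tfrac{\sin mt}{m}-\tfrac{\sin(m+1)t}{m+1}$, and then invoke the monotonicity of $t\mapsto\tfrac{\sin mt}{\sin t}$ (via $N(t)=m\cos mt\sin t-\sin mt\cos t$) to get the clean uniform bound $\tfrac{\sin mt}{\sin t}\ge\tfrac1{2\cos(\pi/(2m+1))}>\tfrac12>a_m m^2(m+1)$ — this handles in one stroke what the paper does in two pieces; on $I_3$ your observation that sine is nonpositive and nondecreasing on $[mt,(m+1)t]\subset[\tfrac{3\pi}{2},2\pi]$ is more elementary than the paper's convexity-with-negative-endpoints argument. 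On the middle interval $I_2$ both proofs show $\upsilon_m'<0$, but where the paper compares $\cos mt$ and $\tfrac{m}{m+1}\cos[(m+1)t]$ termwise, you use the product-to-sum decomposition of $G'$ and a sharp estimate of $(2m+1)\sin\tfrac{t}{2}$ via Jordan's inequality; both are valid, and yours arguably makes the cancellation structure of $G'$ more transparent. The estimate $a_m<\tfrac94 m^{-5}$ via the mean value theorem, hence $a_m m^2(m+1)<\tfrac13$ for $m\ge3$, is a nice self-contained way to supply the explicit smallness that the paper instead draws from a lower bound $a_m>\tfrac1{m^3}-\tfrac1{(m+1)^{3/2}}\bigl[\tfrac2{m^{3/2}}-\tfrac1{(m+1)^{3/2}}\bigr]$; both routes close the argument.
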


\begin{proof}
Since $\upsilon_m$ is odd it is sufficient to study its behaviour in $[0,\frac{2\pi}{m+1}\big]$. Clearly, $\upsilon_m(0)=0$. We compute 
	$$
	\upsilon_m'(t)=\frac{\cos(m t)}{m}-\frac{\cos[(m+1) t]}{m+1}-a_m\cos(t)\qquad\upsilon_m''(t)=\sin[(m+1)t]-\sin(mt)+a_m\sin(t).
	$$
	Since $\frac{2\pi}{m+1}\leq \frac{\pi}{2}$ for all $m\geq 3$, then $\cos(t)$ (and $\sin(t)$) are always non negative.
	We observe that, for all $m\geq 3$, $\upsilon_m'(0)=\frac{m^2-m-1}{m^3(m+1)}+\frac{1}{(m+1)^{3/2}}\big(\frac{2}{m^{3/2}}-\frac{1}{(m+1)^{3/2}}\big)>0$ and $$\upsilon_m\bigg(\frac{2\pi}{m+1}\bigg)=-\sin\bigg(\frac{2\pi}{m+1}\bigg)\bigg[\frac{1}{m^2}+a_m\bigg]<0,$$ therefore there exists at least another zero for $\upsilon_m(t)$ in $\big[0,\frac{2\pi}{m+1}\big]$, we prove that it is unique.\par

	 As in the proof of Lemma \ref{lemmasin} we use the complex identities for the trigonometric functions and we obtain
	\begin{equation*}
	\begin{split}
	\sin[(m+1)\overline t]-\sin(m\overline t)=0\quad &\iff\quad \overline t=2k\pi,\dfrac{(1+2k)\pi}{2m+1}\quad \forall k\in\mathbb{Z}\\	\cos[(m+1)\overline t]-\cos(m\overline t)=0\quad &\iff\quad \overline t=2k\pi,\dfrac{2k\pi}{2m+1}\quad\hspace{5mm}\forall k\in\mathbb{Z}.
	\end{split}
	\end{equation*}
	Hence $\sin[(m+1) t]>\sin(m t)$ for $t\in\big(0,\frac{\pi}{2m+1}\big)\cup\big(\frac{3\pi}{2m+1},\frac{2\pi}{m+1}\big]$ and $\upsilon_m''(t)>0$ for $t\in\big(0,\frac{\pi}{2m+1}\big)\cup\big(\frac{3\pi}{2m+1},\frac{2\pi}{m+1}\big]$; this readily implies that $\upsilon_m(t)>0$ for $t\in\big(0,\frac{\pi}{2m+1}\big]$. Moreover we have
	$$
	\upsilon_m\bigg(\dfrac{3\pi}{2m}\bigg)=-\dfrac{1}{m^2}-\dfrac{1}{(m+1)^2}\sin\bigg[(m+1)\dfrac{3\pi}{2m}\bigg]-a_m\sin\bigg(\dfrac{3\pi}{2m}\bigg)<0
	$$
	since $\frac{3\pi}{2m}<\pi$ and $-\frac{1}{m^2}+\frac{1}{(m+1)^2}<0$ for all $m\geq 3$; this information combined with $\upsilon_m\big(\frac{2\pi}{m+1}\big)<0$ and the convexity of $\upsilon_m (t)$, implies $\upsilon_m(t)<0$ for $t\in\big[\frac{3\pi}{2m},\frac{2\pi}{m+1}\big]$. \par 
	
	On the other hand $\cos(mt)<\cos[(m+1)t]$ for $t\in\big(\frac{2\pi}{2m+1},\frac{2\pi}{m+1}\big]$; since $\frac{m}{m+1}<1$ and $\cos(mt)< 0$ for $t\in\big(\frac{2\pi}{2m+1},\frac{3\pi}{2m}\big)$, we have $\cos(mt)<\frac{m}{m+1}\cos[(m+1)t]$ for $t\in\big(\frac{2\pi}{2m+1},\frac{3\pi}{2m}\big)$, implying $\upsilon_m'(t)<0$ for $t\in\big(\frac{2\pi}{2m+1},\frac{3\pi}{2m}\big)$ and the possibility of a unique zero for $\upsilon_m (t)$ in $\big(\frac{2\pi}{2m+1},\frac{3\pi}{2m}\big)$. Therefore if $\upsilon_m (t)$ is positive in $\big(\frac{\pi}{2m+1},\frac{2\pi}{2m+1}\big]$ we conclude that there are not other zeroes.\par
	
	We consider
	$$
	\upsilon_m(t)>\frac{\sin(m t)}{m^2}-\frac{\sin[(m+1) t]}{(m+1)^{2}}-\dfrac{1}{m^3}+\dfrac{1}{(m+1)^{\frac{3}{2}}}\bigg[\dfrac{2}{m^{\frac{3}{2}}}-\dfrac{1}{(m+1)^{\frac{3}{2}}}\bigg]\sin(t),
	$$
	 and we study the sign of $\overline \upsilon_m (t)=\frac{\sin[m t]}{m^2}-\frac{\sin[(m+1) t]}{(m+1)^{2}}-\frac{1}{m^3}$ for $t\in\big(\frac{\pi}{2m+1},\frac{2\pi}{2m+1}\big]$. We have $$\overline \upsilon_m'' (t)=\sin[(m+1) t]-\sin(m t)<0\qquad \forall t\in\bigg(\frac{\pi}{2m+1},\frac{2\pi}{2m+1}\bigg], $$
	thus if $\overline \upsilon_m \big(\frac{\pi}{2m+1}\big)>0$ and $\overline \upsilon_m \big(\frac{2\pi}{2m+1}\big)>0$ we conclude that $\overline \upsilon_m(t)>0$ for $t\in\big(\frac{\pi}{2m+1},\frac{2\pi}{2m+1}\big]$ and, in turn, $\upsilon_m(t)>0$ for all $t\in\big(\frac{\pi}{2m+1},\frac{2\pi}{2m+1}\big]$.\par 
	Recalling that $\sin\big(\frac{m\pi}{2m+1}\big)=\sin\big(\frac{(m+1)\pi}{2m+1}\big)$ and that,  for all $m\geq 3$, $\frac{m \pi}{2m+1}\geq \frac{3\pi}{7}>\frac{\pi}{3}$, we estimate
	$$
	\overline \upsilon_m \bigg(\frac{\pi}{2m+1}\bigg)=\sin\bigg(\dfrac{m\pi}{2m+1}\bigg)\bigg[\dfrac{1}{m^2}-\dfrac{1}{(m+1)^2}\bigg]-\dfrac{1}{m^3}>\dfrac{\sqrt{3}}{2}\bigg[\dfrac{1}{m^2}-\dfrac{1}{(m+1)^2}\bigg]-\dfrac{1}{m^3}
	$$
	where the last term is positive for $m>\frac{1}{8}(1+\sqrt{3})\big[4-\sqrt{3}+\sqrt{3+8\sqrt{3}}\big]\approx 2.18$.
	
	Finally, exploiting the fact that $\sin\big(\frac{2(m+1)\pi}{2m+1}\big)=-\sin\big(\frac{2m\pi}{2m+1}\big)=-\sin\big(\frac{\pi}{2m+1}\big)$ and \eq{stimaseno}, for all $m\geq 3$, we infer
$$
\overline \upsilon_m \bigg(\frac{2\pi}{2m+1}\bigg)=\sin\bigg(\dfrac{\pi}{2m+1}\bigg)\bigg[\dfrac{1}{m^2}+\dfrac{1}{(m+1)^2}\bigg]-\dfrac{1}{m^3}>\dfrac{(m+1)(2m-\sqrt{3}-1)(2m+\sqrt{3}-1)}{2m^3(m+1)^3}>0.
$$
This concludes the proof.
\end{proof}

 \begin{lemma}\label{lemmasin2}
	Let $N\geq 3$ be an integer. There holds
	
	\begin{equation}\label{sin3}
	\begin{split}
\dfrac{\sin(m\rho)\sin(mx)}{m^3}&-\dfrac{\sin[(m+1)\rho]\sin[(m+1)x]}{(m+1)^{3}}\\&>\sin(\rho)\sin(x)\bigg[\dfrac{1}{m^{\frac{3}{2}}}-\dfrac{1}{(m+1)^{\frac{3}{2}}}\bigg]^2\qquad \forall \rho,x\in\bigg(0,\dfrac{\pi}{N+1}\bigg), \forall m=3,\dots,N.
	\end{split}
	\end{equation}
\end{lemma}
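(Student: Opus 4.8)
The plan is to reduce the two--variable estimate \eqref{sin3} to a one--variable statement suited to Lemma \ref{zeri}. By the symmetry $\rho\leftrightarrow x$ assume $\rho\ge x$, and set $u:=\rho-x$, $v:=\rho+x$, so $0\le u<v$, $u+v=2\rho<\tfrac{2\pi}{N+1}$ and in particular $u<\tfrac{\pi}{N+1}$. The identities $\sin(m\rho)\sin(mx)=\tfrac12[\cos(mu)-\cos(mv)]$ and $\sin\rho\sin x=\tfrac12[\cos u-\cos v]$ show that \eqref{sin3} is equivalent to
\[
G(u)>G(v),\qquad G(w):=\frac{\cos(mw)}{m^3}-\frac{\cos((m+1)w)}{(m+1)^3}-a_m\cos w,
\]
where $a_m:=\big[m^{-3/2}-(m+1)^{-3/2}\big]^2$ is the quantity appearing in Lemma \ref{zeri}. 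Since $G$ is even and $G'(w)=-\upsilon_m(w)$, Lemma \ref{zeri} tells us that $G$ decreases strictly on $[0,t_1]$ and increases strictly on $[t_1,\tfrac{2\pi}{m+1}]$, with $\tfrac{2\pi}{2m+1}<t_1<\tfrac{3\pi}{2m}\le\tfrac{2\pi}{m+1}$ (the last step using $m\ge3$).

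Next I would locate $u$ and $v$. As $m\le N$, one has $u<\tfrac{\pi}{N+1}<\tfrac{2\pi}{2m+1}<t_1$, so $u$ always lies on the decreasing branch of $G$, and $v<\tfrac{2\pi}{N+1}\le\tfrac{2\pi}{m+1}$. If $v\le t_1$, then $u<v\le t_1$ and $G(u)>G(v)$ follows from monotonicity. If $v>t_1$, then $t_1<v<\tfrac{2\pi}{N+1}-u\le\tfrac{2\pi}{m+1}$, and since $G$ increases on $[t_1,\tfrac{2\pi}{m+1}]$ we get $G(v)<G\big(\tfrac{2\pi}{N+1}-u\big)$; hence it remains only to prove
\[
G\!\left(\tfrac{2\pi}{N+1}-u\right)\le G(u)\qquad\text{for all }u\in\left[0,\tfrac{\pi}{N+1}\right),
\]
i.e., by $G'=-\upsilon_m$, that $\displaystyle\int_u^{\,2\pi/(N+1)-u}\upsilon_m(w)\,dw\ge0$.

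To treat this integral I would symmetrise it about $b:=\tfrac{\pi}{N+1}$: substituting $w=b\pm s$ and using $\sin(P+Q)+\sin(P-Q)=2\sin P\cos Q$ gives $\int_{b-A}^{b+A}\upsilon_m(w)\,dw=2\int_0^{A}\Theta_m(s)\,ds$, with $A=b-u\in(0,b]$ and
\[
\Theta_m(s):=\frac{\sin(mb)\cos(ms)}{m^2}-\frac{\sin((m+1)b)\cos((m+1)s)}{(m+1)^2}-a_m\sin b\cos s .
\]
Moreover $\int_0^A\Theta_m(s)\,ds$ equals the left--minus--right side of \eqref{sin3} evaluated at $(\rho,x)=(\tfrac{\pi}{N+1},A)$, so the whole lemma reduces to that \emph{boundary case} $\rho=\tfrac{\pi}{N+1}$ (with $\ge$ in place of $>$, and $A$ ranging over $(0,\tfrac{\pi}{N+1}]$). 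I would prove it by showing that $A\mapsto\int_0^A\Theta_m$ is unimodal — nondecreasing, then nonincreasing — whence its minimum on $[0,b]$ is attained at an endpoint, and then checking the two endpoint values $\int_0^0\Theta_m=0$ and $\int_0^b\Theta_m=\tfrac12\big(G(0)-G(\tfrac{2\pi}{N+1})\big)>0$, the latter from the short computation $G(0)-G(\tfrac{2\pi}{m+1})=(1-\cos\tfrac{2\pi}{m+1})(m^{-3}-a_m)>0$ together with the shape of $G$. The unimodality is the step I expect to be the real obstacle: it says $\Theta_m$ changes sign at most once on $[0,b)$, from positive (note $\Theta_m(0)=\upsilon_m(b)>0$ since $b<t_1$) to negative, and I would establish it in the paper's usual style — sign analysis of $\Theta_m,\Theta_m',\Theta_m''$, each a linear combination of $\sin$'s or $\cos$'s at frequencies $m,m+1,1$, whose zeros are located through the complex--exponential identities already used in the proof of Lemma \ref{zeri} (zeros of $\cos((m+1)t)\pm\cos(mt)$ and $\sin((m+1)t)\pm\sin(mt)$ at rational multiples of $\pi/(2m+1)$), finished off with elementary bounds such as $\sin x\ge\tfrac{3}{\pi}x$ on $(0,\pi/6]$ and $\sin(mt)<mt$, and a splitting of $[0,b)$ at $\tfrac{\pi}{2m+1}$ where needed. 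Unravelling the chain of reductions then yields \eqref{sin3}.
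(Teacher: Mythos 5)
Your substitution $u=\rho-x$, $v=\rho+x$, turning the difference in \eqref{sin3} into $\tfrac12[G(u)-G(v)]$ with $G'=-\upsilon_m$, is exactly the change of variables the paper uses (calling them $\iota,\gamma$), and your appeal to Lemma \ref{zeri} for the shape of $G$ (decreasing on $[0,t_1]$, increasing on $[t_1,\tfrac{2\pi}{m+1}]$, with $u<\tfrac{\pi}{N+1}<\tfrac{2\pi}{2m+1}<t_1$) is correct. Where you diverge: the paper treats the problem two-dimensionally on $[0,\pi/(m+1)]^2$ (only interior critical point is a maximum, then a boundary check), whereas you run a one-variable monotonicity argument. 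Up to the case split $v\le t_1$ versus $v>t_1$, and the estimate $G(v)<G\big(\tfrac{2\pi}{N+1}-u\big)$ in the second case, the reduction is sound.

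The gap is the unproved claim that $A\mapsto\int_0^A\Theta_m$ is unimodal, equivalently that $\Theta_m$ changes sign at most once on $[0,\pi/(N+1)]$. You flag this yourself as ``the real obstacle'' and offer only a sketch. The difficulty is concrete: with $b=\pi/(N+1)$, the coefficients $\sin(mb)$, $\sin((m{+}1)b)$, $a_m\sin b$ do not simplify, so $\Theta_m$ genuinely mixes three frequencies $1,m,m{+}1$; the proposed sign analysis of $\Theta_m,\Theta_m',\Theta_m''$ is not a routine matter of locating zeros of two-term combinations as in Lemma \ref{zeri}. As written the proof does not close.

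There is a fix that makes your approach work: choose the other available upper endpoint. In the case $v>t_1$ you also have $v<\tfrac{2\pi}{N+1}-u\le\tfrac{2\pi}{m+1}-u\le\tfrac{2\pi}{m+1}$, and since $\tfrac{2\pi}{m+1}-u>v>t_1$ both $v$ and $\tfrac{2\pi}{m+1}-u$ lie in the increasing branch of $G$; hence $G(v)<G\big(\tfrac{2\pi}{m+1}-u\big)$, and it suffices to show $\int_u^{2\pi/(m+1)-u}\upsilon_m\,dw\ge 0$. Symmetrising this integral about $\pi/(m+1)$ instead of $\pi/(N+1)$, the $(m{+}1)$-frequency coefficient picks up the factor $\sin\big((m{+}1)\tfrac{\pi}{m+1}\big)=0$ and drops out, and (using $\sin\big(\tfrac{m\pi}{m+1}\big)=\sin\big(\tfrac{\pi}{m+1}\big)$) the integral collapses to $2\sin\big(\tfrac{\pi}{m+1}\big)\widetilde h_m\big(\tfrac{\pi}{m+1}-u\big)$, with $\widetilde h_m(x)=\tfrac{\sin(mx)}{m^3}-a_m\sin x$. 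The nonnegativity of $\widetilde h_m$ on $[0,\pi/(m+1)]$ is precisely the boundary estimate the paper establishes, and it is genuinely easier because only two frequencies survive. So your one-variable route is viable and arguably cleaner than the paper's two-variable critical-point analysis, but only once the endpoint is chosen so that the $(m{+}1)$-term cancels; the remaining one-variable estimate is nontrivial and still requires the paper's argument for $\widetilde h_m$.
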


\begin{proof}
	As in the previous lemma we set $a_m:=\big[\frac{1}{m^{3/2}}-\frac{1}{(m+1)^{3/2}}\big]^2$ and
	we consider the function 
	$$
	\overline S_m(\rho,x)=\dfrac{\sin(m\rho)\sin(mx)}{m^3}-\dfrac{\sin[(m+1)\rho]\sin[(m+1)x]}{(m+1)^{3}}-a_m\sin(\rho)\sin(x)$$
	 with $Q:=\big[0,\frac{\pi}{m+1}\big]^2$. By Weierstrass Theorem $\overline S_m$ admits maximum and minimum in $Q$. To locate the stationary points of $\overline S_m$ it is convenient to exploit the following change of variables:
\begin{equation*}
\begin{cases}
\rho+x=\gamma\\
\rho-x=\iota
\end{cases}
\qquad \Rightarrow\qquad \begin{cases}
\rho=\frac{\gamma+\iota}{2}\\
x=\frac{\gamma-\iota}{2} \,
\end{cases}
\end{equation*}
according to which $\overline S_m$ reads
$$\overline S_m(\iota,\gamma)=\dfrac{1}{2}\bigg(\dfrac{\cos(m\iota)-\cos(m\gamma)}{m^3}-\dfrac{\cos[(m+1)\iota]-\cos[(m+1)\gamma]}{(m+1)^{3}}-a_m\big[\cos(\iota)-\cos(\gamma)\big]\bigg)$$
for $(\iota,\gamma)\in Q_1:=\big[-\frac{\pi}{m+1},\frac{\pi}{m+1}\big]\times\big[0,\frac{2\pi}{m+1}\big]$.
 Let $\upsilon_m$ be as defined in the statement of Lemma \ref{zeri}, we have
	\begin{equation*}
	\begin{cases}
	\frac{\partial \overline S_m}{\partial \iota}(\iota,\gamma)&=-\frac{\upsilon_m(\iota)}{2}=0\\ 
	\frac{\partial \overline S_m}{\partial \gamma}(\iota,\gamma)&=\frac{\upsilon_m(\gamma)}{2}=0\,.
	\end{cases}
	\end{equation*} 
	By Lemma \ref{zeri} it is readily deduced that $\overline S_m$ admits only two stationary points $(0,0)$
  and $(0,t_1)$ with $t_1\in\big(\frac{2\pi}{2m+1},\frac{3\pi}{2m}\big)$. 	 Since  $(0,0)\in \partial Q_1$, we only need to study the nature of $(0,t_1)$. We have $\frac{\partial^2 \overline S_m(0,t_1)}{\partial \iota^2}=-\upsilon_m'(0)<0$, $\frac{\partial^2 \overline S_m}{\partial \iota\partial \gamma}(0,t_1)=0$ and $\frac{\partial^2 \overline S_m(0,t_1)}{\partial \gamma^2}=\upsilon_m'\big(t_1\big)<0$, see the proof of Lemma \ref{zeri}. This implies that $(0,t_1)$ is a maximum point. \par 
  Coming back to the original variables, from the above analysis we infer that $(\frac{t_1}{2},\frac{t_1}{2})$ is a local maximum point for $\overline S_m$. About $\overline S_m(\rho,x)$ constrained to $\partial Q$, we have: $\overline S_m(0,x)=\overline S_m(\rho,0)=0$ for all $x,\rho \in \big[0,\frac{\pi}{m+1}\big]$; hence, we only need to study $\overline S_m(\frac{\pi}{m+1},x)$ and $\overline S_m(\rho,\frac{\pi}{m+1})$  for all $x,\rho \in \big[0,\frac{\pi}{m+1}\big]$, since they have the same analytic expression, we only focus on
   $$\overline S_m\bigg(\frac{\pi}{m+1},x\bigg)=\sin\bigg(\frac{\pi}{m+1}\bigg)\bigg[\dfrac{\sin(mx)}{m^3}-a_m\sin(x)\bigg]:=\sin\bigg(\frac{\pi}{m+1}\bigg)\widetilde{h}_m(x) \quad \text{for } x \in \big[0,\frac{\pi}{m+1}\big]\,,$$
   where $\sin\big(\frac{m\pi}{m+1}\big)=\sin\big(\frac{\pi}{m+1}\big)>0$ for all $m\geq 3$. We have $\widetilde{h}_m(0) =0$ and $$\widetilde{h}_m\bigg(\frac{\pi}{m+1} \bigg)=\sin\bigg(\frac{\pi}{m+1}\bigg)\dfrac{1}{(m+1)^{\frac{3}{2}}}\bigg[\dfrac{2}{m^{\frac{3}{2}}}-\dfrac{1}{(m+1)^{\frac{3}{2}}}\bigg]>0\qquad \forall m\geq 3.
	$$
	We study the sign of the stationary points of $\widetilde h_m(x)$ for $x\in\big[0,\frac{\pi}{m+1}\big]$; we get
	$$
	\widetilde{h}_m'(\hat x)=0\quad\iff\quad a_m\cos(\hat x)=\dfrac{\cos(m\hat x)}{m^2};
	$$	
	since $\cos(\hat x)>0$ for $\hat x\in\big[0,\frac{\pi}{m+1}\big]$ we rewrite $\widetilde{h}(\hat x)$ as follows
	$$
	\widetilde{h}_m(\hat x)=\dfrac{1}{m^2\cos(\hat x)}\bigg(\dfrac{\sin(m\hat x)\cos(\hat x)}{m}-\cos(m\hat x)\sin(\hat x)\bigg):=\dfrac{1}{m^2\cos(\hat x)}\widetilde\eta_m (\hat x).
	$$ 
	We have $\widetilde \eta_m(\hat x)=\big(\frac{m^2-1}{3}\big)\hat x^3+o(\hat x^3)$ for $\hat x\rightarrow 0$ and $\widetilde \eta_m '(\hat x)=(m-1/m)\sin(m\hat x)\sin(\hat x)\geq 0$, being $\sin(m\hat x)\geq0$ for $\hat x\in\big[0,\frac{\pi}{m+1}\big]$. Hence, $\widetilde{h}_m(\hat x)>0$. This, combined with the fact that $\widetilde{h}_m(0 )=0$ and $\widetilde{h}_m\bigg(\frac{\pi}{m+1} \bigg)>0$ implies that $\widetilde h_m(x)\geq0$ for all $ x\in\big[0,\frac{\pi}{m+1}\big]$.\par
	 Once established that $\overline S_m(\rho,x)$ is non-negative on $\partial Q$ and admits no internal minimum points in $Q$, we conclude that $\overline S_m(\rho,x)\geq 0$ for all $(\rho,x)\in Q$. Finally, the strict inequality in \eq{sin3} comes by observing that $ \overline S_m(\rho,x)=0$ for $(\rho,x)\in Q$, if and only if $\rho=0$ or $x=0$.
\end{proof}

\par\noindent
{\small \textbf{Acknowledgments.} The authors are members of the Gruppo Nazionale per l'Analisi Matematica, la Probabilit\`a e le loro Applicazioni (GNAMPA) of the Istituto Nazionale di Alta Matematica (INdAM) and are partially supported by the INDAM-GNAMPA 2019 grant: ``Analisi spettrale per operatori ellittici con condizioni di Steklov o parzialmente incernierate'' and by the PRIN project 201758MTR2: ``Direct and inverse problems for partial differential equations: theoretical aspects and applications'' (Italy). 
}

\end{document}